\newtheorem{thm}{\bf Theorem }
\newtheorem{prop}[thm]{\bf Proposition}
\newtheorem{cor}[thm]{\bf Corollary}
\newtheorem{lem}[thm]{\bf Lemma }
\newtheorem{deff}[thm]{\bf Definition}
\newtheorem{rem}[thm]{\bf Remark }
\newtheorem{exm}[thm]{\bf Example}
\newtheorem{alg}{\bf Algorithm}
\newenvironment{proof}{\par\noindent{\bf Proof\ }}{\hfill\BlackBox\\[2mm]}
\newcommand{\A}{\mathcal{A}}
\renewcommand{\L}{\mathcal{L}}
\newcommand{\X}{\mathcal{X}}
\newcommand{\D}{\mathcal{D}}
\renewcommand{\S}{\mathcal{S}}
\newcommand{\I}{\mathcal{I}}
\renewcommand{\H}{\mathcal{H}}
\newcommand{\B}{\mathcal{B}}
\newcommand{\rr}{\mathcal{R}}
\renewcommand{\S}{\mathcal{S}}
\newcommand{\K}{\mathcal{K}}
\newcommand{\T}{\mathcal{T}}
\newcommand{\F}{\mathcal{F}}
\newcommand{\R}{\mathbb{R}}
\newcommand{\N}{\mathbb{N}}
\newcommand{\Z}{\mathbb{Z}}
\newcommand{\C}{\mathbb{C}}
\newcommand{\Fb}{\mathbf{F}}
\newcommand{\Tb}{\mathbf{T}}
\newcommand{\tT}{\widetilde{\mathcal{T}}}
\newcommand{\xb}{\mathbf{x}}
\newcommand{\vb}{\mathbf{e}}
\newcommand{\xeq}{x_{\operatorname{eq}}}
\newcommand{\roa}{\mathcal{D}}
\newcommand{\pk}{\mathfrak{p}}
\newcommand{\xk}{\mathfrak{x}}
\newcommand{\Zk}{\mathcal{Z}}
\newcommand{\zk}{\mathfrak{z}}
\newcommand{\yk}{\mathfrak{y}}
\newcommand{\hk}{\mathfrak{h}}
\newcommand{\uk}{\mathfrak{u}}
\newcommand{\vk}{\mathfrak{v}}
\newcommand{\Lk}{\mathfrak{L}}
\newcommand{\tl}{\tilde{\lambda}}
\newcommand{\tzeta}{\widetilde{\zeta}}
\newcommand{\bzeta}{\bar{\zeta}}
\newcommand{\hphi}{\hat{\phi}}
\newcommand{\hV}{\hat{V}}
\newcommand{\hU}{\hat{U}}
\newcommand{\hW}{\hat{W}}
\newcommand{\hv}{\hat{v}}
\newcommand{\hu}{\hat{u}}
\newcommand{\eps}{\varepsilon}
\newcommand{\tta}{\vartheta}
\newcommand{\Nb}{\mathbf{N}}
\newcommand{\unn}{U_{\operatorname{NN}}}
\newcommand{\uedmd}{U_{\operatorname{ZK}}}
\newcommand{\vect}{\operatorname{vec}}
\newcommand{\chie}{\chi_{\operatorname{e}}}
\newcommand{\chid}{\chi_{\operatorname{d}}}
\newcommand{\ra}{\rightarrow}
\newcommand{\zero}{\mathbf{0}}
\newcommand{\id}{\operatorname{id}}
\newcommand{\dom}{\operatorname{dom}}
\newcommand{\inte}{\operatorname{int}}
\newcommand{\loclip}{\textbf{\textit{LocLip}}}
\newcommand{\lip}{\textbf{\textit{Lip}}}
\newcommand{\cj}{\wedge}
\newcommand{\set}[1]{\left\{#1\right\}}
\newcommand{\oball}{\mathcal{B}}
\newcommand{\abs}[1]{\left\vert #1 \right\vert}
\newcommand{\BlackBox}{\rule{1.5ex}{1.5ex}}    
\newcommand{\pfbox}{\hfill\BlackBox}    
\newcommand{\Qed}{\hfill$\diamond$} 
\newcommand{\ym}[1]{{\color{black} #1}}
\newcommand{\ymr}[1]{{\color{black} #1}}
\def\BibTeX{{\rm B\kern-.05em{\sc i\kern-.025em b}\kern-.08em
    T\kern-.1667em\lower.7ex\hbox{E}\kern-.125emX}}
\begin{document}
\title{Learning Regions of Attraction in Unknown Dynamical Systems via Zubov-Koopman Lifting: Regularities and Convergence}
\author{Yiming Meng, Ruikun Zhou, and Jun Liu, \IEEEmembership{Senior Member, IEEE}
\thanks{This research was supported in part by an NSERC Discover Grant, an
Ontario Early Researcher Award, and the Canada Research Chairs program.
This research was enabled in part by support provided by the Digital
Research Alliance of Canada (alliance.ca).}
\thanks{The authors are with the Department of Applied Mathematics, Faculty of
Mathematics, University of Waterloo, Waterloo, Ontario N2L 3G1, Canada. Emails: yiming.meng@uwaterloo.ca,
ruikun.zhou@uwaterloo.ca, j.liu@uwaterloo.ca.}
}

\maketitle

\begin{abstract}
The estimation for the region of attraction (ROA) of an asymptotically stable equilibrium point is crucial in the analysis of nonlinear systems. There has been a recent surge of interest in estimating the solution to Zubov's equation, whose non-trivial sub-level sets form the exact ROA. In this paper, we propose a lifting approach to map observable data into an infinite-dimensional function space, which generates a flow governed by the proposed `\emph{Zubov-Koopman}' operators. By learning a Zubov-Koopman operator over a fixed time interval, we can indirectly approximate the solution to Zubov's equation through iterative application of the learned operator on certain functions. We also demonstrate that a transformation of such an approximator can be readily utilized as a near-maximal Lyapunov function. We approach our goal through a comprehensive investigation of the regularities of Zubov-Koopman operators and their associated quantities. Based on these findings, we present an algorithm for learning Zubov-Koopman operators that exhibit strong convergence to the true operator. We show that this approach reduces the amount of required data and can yield desirable estimation results, as demonstrated through numerical examples. 
\end{abstract}

\begin{IEEEkeywords}
Unknown nonlinear systems, region of attraction, Zubov's equation, Zubov-Koopman operators, regularity analysis, viscosity solution.
\end{IEEEkeywords}

\section{Introduction}
\label{sec:introduction}
\IEEEPARstart{A}{n} important aspect in dynamical systems is the determination of the region of attraction (ROA) for an equilibrium point. This is of significant importance in safety-critical industries such as aviation, robotics, and power systems, where comprehending operational boundaries is crucial.  

Estimating the ROA for nonlinear systems can be rigorously resolved using formal methods \cite{li2020robustly}. \ymr{This is achieved through space discretization, formulating an `inclusion' of system transitions that serves as a symbolic abstraction, and using this surrogate abstraction to conservatively identify those states that will always reach an equilibrium point (which is the definition of the ROA).} 
\ymr{Formal methods require rigorous analysis to ensure the precision of the estimation,} and face severe computational complexity arising from state space discretization. They are also not prepared for predicting the ROA of systems with limited knowledge.

This issue can be resolved by Lyapunov methods. Lyapunov functions qualitatively characterize stability properties for various nonlinear systems, and a forward-invariant sublevel set within their domains serves as an estimate of the ROA. The existence of Lyapunov functions is guaranteed by converse Lyapunov theorems \cite{wilson1969smoothing,lin1996smooth, clarke1998asymptotic,teel2000smooth}. 
The primary challenge lies in the construction of Lyapunov functions that possibly enhance the conservative estimation of ROAs for more general nonlinear systems.  On the other hand, Zubov's theorem characterizes the maximal
Lyapunov function \cite{vannelli1985maximal} defined on the domain of attraction. However, it requires solving a partial
differential equation (PDE) \cite{hassan2002nonlinear}. 

In real-world applications, limited knowledge of the system dynamics can make the identification of ROAs using Lyapunov methods even more challenging. Inspired by recent advances in Koopman operator-based system identification for unknown dynamical systems,   we propose a Zubov-Koopman lifting approach to estimate the ROA by approximating the solutions to Zubov's equations, whose non-trivial sublevel sets form the
exact ROA.  We aim to: 1) study the regularities of Zubov's equation and introduce a linear Zubov-Koopman operator to characterize the solution; 2) configure the trajectory data and learn the Zubov-Koopman operator; 3) approximate the entire  ROA for an asymptotically stable \ymr{equilibrium point}, with emphasis on an asymptotically stable equilibrium point, using the learned Zubov-Koopman operator; 4)  approximate near-maximal Lyapunov functions and provide formal verification as a byproduct.


We review some crucial results from the literature that are pertinent to the work presented in this paper.

\subsection{Related Work}
The computation of Lyapunov functions has a long history \cite{giesl2015review} and has gained increased attention with respect to data-driven methods~\cite{dawson2022safe}. In particular, the development of the Koopman operator theory provides a promising alternative learning approach for nonlinear system identification and Lyapunov function constructions~\cite{brunton2021modern, koopman1931hamiltonian, mezic2005spectral}. 

In essence, Koopman operators simplify the nonlinear analysis by lifting the states of the system into the space of observable functions, which evolve linearly governed by Koopman operators. The spectral properties of linear Koopman operators can facilitate Koopman mode decomposition for nonlinear systems using a specific set of observable functions. Several established techniques, such as dynamic mode decomposition (DMD)~\cite{schmid2009dynamic} and extended dynamic mode decomposition (EDMD) ~\cite{williams2015data}, can be employed to acquire this linear representation. In addition, the autoencoder architecture can be considered as suitable neural-network observable functions, although it requires more training efforts to achieve an optimized linear representation~\cite{lusch2018deep, azencot2020forecasting}. 

Taking advantage of the spectral representation, in~\cite{mauroy2013spectral} and \cite{mauroy2016global}, the authors 
showed that a set of Lyapunov functions for nonlinear systems with global stability can be constructed based on the eigenfunctions of the learned Koopman operator. The work in ~\cite{deka2022koopman} improved the multi-step trajectory prediction accuracy by a modified autoencoder architecture, with the corresponding Koopman eigenfunctions parameterizing a set of Lyapunov function candidates. In this framework, compared to the non-Koopman neural Lyapunov framework \cite{zhou2022neural}, it becomes possible to achieve a more desirable ROA estimation by exploring various valid combinations within the set of resulting Lyapunov functions. 

\ymr{The Koopman method in \cite{deka2022koopman, mauroy2013spectral, mauroy2016global, yi2023equivalence, bierwart4569180numerical, zeng2024sampling} requires working on a forward-invariant compact subset within the state space for stability analysis. } 
This allows the Koopman operators to preserve the associated function space on the invariant set. However, it sacrifices the ability to construct Lyapunov functions on a larger scale, given their unbounded nature near the boundary of the open ROA. 

Considering this limitation, Zubov's construction of a Lyapunov function seems to address the issue by ensuring that the solution is always bounded. This approach can offer potential advantages in numerical approximations, particularly when solving a PDE to find a Lyapunov function. This property enables the extension of the function domain to the entire state space or any desired set where computations occur \cite{liu2023towards}. 

In this regard, recent investigations have focused on numerical solutions to Zubov's equation. In cases where system dynamics are known, the approach in \cite{grune2021computing} utilized local exponential stability conditions to train neural networks, similar in nature to physics-informed neural networks (PINNs) \cite{raissi2019physics}. The most recent work \cite{liu2023towards} introduced PINN algorithms for computing Lyapunov functions capable of approximating the entire ROA for an asymptotically stable compact set with high accuracy. The Lyapunov function candidate generated by the neural network was also formally verified.  The work in \cite{kang2021data} 
employed a purely data-driven approach for estimating the solution to Zubov's PDE. This method, however,  requires long-term observation of trajectory data, and may have limited predictability of ROAs when the observation time is restricted.

\subsection{Contributions}
Considering the pros and cons of the Koopman approach and Zubov's PDE, we propose a Zubov-Koopman operator-based data-driven technique, leveraging trajectory data within a fixed observation span to predict the ROA of a known asymptotically stable compact set or equilibrium point for unknown dynamics.  In brief:
\begin{enumerate}
    \item We conduct a regularity analysis for Zubov's equation, where we relax the assumption from continuously differentiable vector fields to local Lipschitz continuity. Additionally, we explore how the solution to Zubov's equation connects with a more general notion of Lyapunov functions. 
    \item We introduce a Zubov-Koopman operator and establish its connections with Zubov's equation. Building upon the regularity analysis mentioned earlier, we provide a detailed proof demonstrating how a Zubov-Koopman operator can lead to convergence to the solution of Zubov's equation in a more general sense. 
    \item While it is often assumed that finite-rank approximations apply to Koopman-like operators, we rigorously investigate the theoretical feasibility of finite-dimensional approximations for Zubov-Koopman operators and demonstrate how they can guide the selection of observable functions. Notably, \ymr{the existing theorems on the existence of Koopman eigenfunctions are based either on classical linearization theorems \cite{lan2013linearization}\cite[Section IV]{ mauroy2013spectral}, which may not be suitable for locally Lipschitz continuous vector fields, or are complicated to verify \cite[Proposition 6]{kvalheim2021existence}.} 
    The spectral analysis in this paper will focus on the specific goal of approximating the solution to Zubov's equation under mild conditions.
    \item We introduce a learning algorithm for the Zubov-Koopman operator and employ the learned operator to approximate the solution to Zubov's equation, thereby estimating the ROA. We show that extra efforts are required to obtain a near-maximal Lyapunov function. 
    \item We provide numerical experiments and demonstrate the effectiveness of the proposed approach. 
\end{enumerate}

The rest of the paper is organized as follows. Section \ref{sec: pre} presents some preliminaries on the Koopman operator, Zubov's equation, and concepts for analyzing its solution regularity.   Section \ref{sec: regularity} focuses on conducting regularity analysis under mild system conditions.  Building on this, in Section \ref{sec: zubov-Koopman}, we introduce the Zubov-Koopman operator and demonstrate how the Zubov-Koopman operator can effectively characterize the solution to Zubov's equation. Section \ref{sec: finite-approx} establishes the theoretical feasibility of a finite-dimensional representation of the Zubov-Koopman operator and demonstrates its interplay with observable data. Section \ref{sec: algrithms} introduces the algorithms, with numerical experiments conducted in Section \ref{sec: num}.  The paper is concluded in Section \ref{sec: conclu}. 

\subsection{Notation}
We denote by $\R^n$ the Euclidean space of dimension $n>1$, and by $\R$ the set of real numbers. 
For $x\in\R^n$ and $r\ge 0$, we denote the ball of radius $r$ centered at $x$ by $\B(x, r)=\set{y\in\R^n:\,\abs{y-x}\le r}$, where $\abs{\;\cdot\;}$ is the Euclidean norm. For a closed set $A\subset\R^n$ and $x\in\R^n$, we denote the distance from $x$ to $A$ by $\abs{x}_{A}=\inf_{y\in A}\abs{x-y}$ and $r$-neighborhood of $A$ by $\B(A, r)=\cup_{x\in A}\B(x, r)=\set{x\in\R^n:\,\abs{x}_A\le r}$. For a set $A\subseteq\R^n$, $\overline{A}$ denotes its closure, 
$\inte(A)$ denotes its interior, and $\partial A$ denotes its boundary.  For two sets $A,B\subseteq\R^n$, the set difference is defined by $A\setminus B=\set{x:\,x\in A,\,x\not\in B}$. 
For finite-dimensional matrices, we use the Frobenius norm  $\|\cdot\|_F$ as the metric.  Given $a,b\in\R$, we define $a\wedge b:=\min(a,b)$. 

Let $C(\Omega)$ be the set of continuous functions and $C_b(\Omega)$   be the set of bounded continuous functions with domain $\Omega$. We denote the set of $i$-th continuously
differentiable functions by $C^i(\Omega)$, and similarly, bounded continuously
differentiable functions by $C_b^i(\Omega)$. We denote the set of locally and uniformly Lipschitz continuous functions by $\loclip(\Omega)$ and $\lip(\Omega)$. When making general statements for $v\in C^1(\R^n)$ with $n\geq 1$, we denote $\nabla $ as its gradient (or as the derivative when $n=1$).

\section{Preliminaries}\label{sec: pre}
\subsection{Dynamical Systems}
Given a state space $\X\subseteq\R^n$, 
we consider a continuous-time nonlinear dynamical system of the form 

\begin{equation}\label{E: sys}
    \dot{\xb}(t) = f(\xb(t)),\;\;\xb(0)=x\in\X,\;t\in[0,\infty), 
\end{equation}
where $x$ denotes the initial condition, and the vector field $f:\X\ra\X$ is assumed to be locally Lipschitz. 

On the maximal  interval of existence $\mathcal{I}\subseteq [0,\infty)$, the forward flow map (solution map) $\phi: \I\times \X\rightarrow \X$ should satisfy 
\begin{equation}
    \begin{cases}
        &\partial_t(\phi(t,x)) = f(\phi(t,x)), \\
        & \phi(0, x)=x, \\
        & \phi(s, \phi(t,x))=\phi(t+s, x), \;\forall t,s\in\mathcal{I}
    \end{cases}
\end{equation}

Without loss of generality, 
throughout the paper, we will assume that  the 
maximal interval of existence of the (unique) flow map to the initial value problem   \eqref{E: sys} is $\mathcal{I}=[0,\infty)$. We also generally consider that the state space $\X=\R^n$, i.e., the flow is not necessarily assumed to be invariant within a strict subset of $\R^n$.

\ymr{To define Koopman operators, which govern the evolution of observable functions, let us now consider a complete 
function space $\mathcal{F}$ of the observable real-valued functions $h: \X\rightarrow\mathbb{R}$.} 

\begin{deff}[Koopman Operator]
    \label{def: Koopman} 
The Koopman operator family $\{\K_t\}_{t\geq 0}$ of system \eqref{E: sys} is a collection of maps $\mathcal{K}_t: \mathcal{F} \rightarrow \mathcal{F}$  defined by
\begin{align}
\mathcal{K}_t h = h\circ \phi(t, \cdot), \quad h \in \mathcal{F}
\end{align}
for each $t\geq 0$, where $\circ$ is the composition operator. The (infinitesimal) generator $\L_f$ of $\{\K_t\}_{t\geq 0}$ is defined by
\begin{equation}
    \L_f h(x):= \lim_{t\ra 0}\frac{\K_t h(x)-h(x)}{t},
\end{equation}
where the observable functions should be within the domain of $\L_f$, i.e. $\dom(\L_f)=\set{h\in\F:\lim_{t\ra 0}\frac{\K_t h(x)-h(x)}{t}\;\text{exists}}. $
\end{deff}

Suppose that the observable functions
are bounded and continuously differentiable, the  generator is such that
\begin{equation}
    \L_f h = \nabla h \cdot f, \;\;h\in C_b^1(\X).
\end{equation}

Koopman operators form a linear $C_0$-semigroup that satisfies the following criteria. They allow us to study the nonlinear dynamics through the infinite-dimensional lifted space of observable functions with linear dynamics.

\begin{deff}[Semigroup]\label{def: semigroup}
A one-parameter family $\{\S_t\}_{t\geq 0}$,
of bounded linear operators from $\F$ into $\F$ is a semigroup of
bounded linear operators on $\F$ if
\begin{enumerate}
    \item $\S_0=\id$, ($\id$ is the identity operator). 
    \item $\S_t\circ \S_s=\S_{t+s}$ for every $t,s\geq 0$. 
\end{enumerate}
In addition, a semigroup $\{\S_t\}_{t\geq 0}$ is a strongly continuous semigroup, or $C_0$-semigroup, if 
$\lim_{t\downarrow 0} \S_t h = h$ for all $h \in \F$. 
\end{deff}

\subsection{Concept of Stability}

We are interested in systems of the form \eqref{E: sys} with an intrinsic asymptotically stable set $\A\subseteq\X$. We define the set stability as follows. 
\begin{deff}[Set stability]
A closed invariant set $\A\subseteq\mathcal{X}$ is
said to be  asymptotically stable for \eqref{E: sys}
if 
\begin{enumerate}
    \item  for every $\eps>0$, there exists  a $\delta>0$ such that $|x|_\A< \delta$ implies $|\phi(t, x)|_\A <\eps$ for all $t\geq 0$, and
    \item there exists a $\delta>0$ such that $|x|_\A< \delta$ implies $\lim_{t\ra\infty}|\phi(t, x)|_\A=0$. 
\end{enumerate}

Furthermore, $\A$ is said to be locally exponentially stable, if there exists a $\delta>0, M>0$ and $c>0$ such that $|\phi(t, x)|_\A\leq M|x|_\A e^{-ct}$, for all $t\geq 0$ and $x\in\set{x\in\X: |x|_\A\leq \delta}. $
\end{deff}

We further define the region of attraction (ROA) of $\A$ given its asymptotic stability, 
 which quantifies a region of the state space from which each absolutely continuous trajectory starts and eventually converges to the
attractor itself.

\begin{deff}[ROA]
Suppose that $\A$ is asymptotically stable, the ROA of $\A$ is a set  defined as
$\roa(\A):=\set{x\in\X: \lim_{t\ra\infty}|\phi(t, x)|_\A=0}.$
\end{deff}

\begin{rem}
    It is a well-known result that the ROA is an open and forward invariant set. \Qed
\end{rem}

To better convey the idea of this paper, we propose the following hypothesis for unknown systems in the form of \eqref{E: sys}.
\begin{itemize}
    \item[(H1)] We assume that there exists an equilibrium point $\xeq$ of \eqref{E: sys}, i.e. a point such that $f(\xeq)=\zero$. 
    \item[(H2)]We assume full knowledge of $\xeq$, and that $\set{\xeq}$ is locally exponentially stable. 
\end{itemize}
Based on Hypotheses (H1) and (H2), the purpose of this paper is to employ a data-driven approach to estimate the ROA of an equilibrium point of unknown systems.

\subsection{Zubov's Theorem}
The ROA can be characterized by a maximal Lyapunov function as described in the following theorem \cite{vannelli1985maximal}. 
\begin{thm}\label{thm: max_lyap}
	Let $D\subseteq\R^n$ be an open set. Suppose that there exists a function $V\in C^1(D)$  such that $\A\subseteq D$ and the following conditions hold: 1) $V$ is positive definite on $D$ with respect to $\A$, i.e., $V(x)=0$ for all $x\in \A$ and $V(x)>0$ for all $x\in D\setminus \A$; 2) the derivative of $V$ along solutions of (\ref{E: sys}) 
     is well-defined for all $x\in D$ and satisfies 
          \begin{equation}\label{eq:DV1}
     \nabla V(x)\cdot f(x)  = - q(x),    
     \end{equation}
where the function \ym{$q:\,\R^n\ra\R$} is continuous and positive definite with respect to $\A$; 3) $V(x)\ra \infty$ as $x\ra \partial D$ or $|x|\ra \infty$. 
 Then $D=\roa(\A)$. 
\end{thm}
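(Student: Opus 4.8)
The plan is to establish the two inclusions $D\subseteq\roa(\A)$ and $\roa(\A)\subseteq D$ separately. The single most useful consequence of condition~2 is that $V$ is monotone along trajectories: whenever $\phi(s,x)\in D$ on a time interval, the chain rule gives $\frac{d}{ds}V(\phi(s,x))=\nabla V(\phi(s,x))\cdot f(\phi(s,x))=-q(\phi(s,x))\le 0$, so $V\circ\phi(\cdot,x)$ is nonincreasing and, upon integration, $V(\phi(s_1,x))-V(\phi(s_2,x))=\int_{s_1}^{s_2}q(\phi(s,x))\,ds$ for $s_1\le s_2$. Condition~3 (properness of $V$) and the positive definiteness of $q$ will then be used to control the flow; I will treat $\A$ as compact, which holds in the intended application $\A=\set{\xeq}$.

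For $D\subseteq\roa(\A)$, fix $x\in D$ and set $c_0=V(x)$. First I would show forward invariance of $D$: if the trajectory left $D$, there would be a first time $t_1=\inf\set{t\ge 0:\,\phi(t,x)\notin D}$ with $\phi(t_1,x)$ reached as a limit of interior points, so $\phi(t_1,x)\in\partial D$ and $V(\phi(s,x))\to\infty$ as $s\uparrow t_1$ by condition~3, contradicting $V(\phi(s,x))\le c_0$. Hence the orbit stays in the sublevel set $\set{y\in D:\,V(y)\le c_0}$, which by condition~3 (using both the boundary and the $\abs{y}\to\infty$ alternatives) is a compact subset of $D$; the orbit is thus bounded and $V(\phi(t,x))$ decreases to some limit $c\ge 0$. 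The $\omega$-limit set $\Omega$ is then nonempty, compact, invariant, contained in $D$, and satisfies $V\equiv c$ on it. Invariance makes $s\mapsto V(\phi(s,y))$ constant for each $y\in\Omega$, so differentiating gives $q(y)=0$; positive definiteness of $q$ with respect to $\A$ forces $y\in\A$, whence $\Omega\subseteq\A$, $c=0$, and $\abs{\phi(t,x)}_\A\le\operatorname{dist}(\phi(t,x),\Omega)\to 0$. Thus $x\in\roa(\A)$.

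For the reverse inclusion I would argue by contradiction with an entry-time analysis. Take $x\in\roa(\A)$; since $\A$ is compact and contained in the open set $D$, some $\B(\A,\delta)\subseteq D$, and because $\abs{\phi(t,x)}_\A\to 0$ there is a $T$ with $\phi(T,x)\in D$. If $x\notin D$, put $\tau=\sup\set{s\in[0,T]:\,\phi(s,x)\notin D}$; continuity of the flow and openness of $D$ give $\phi(\tau,x)\in\partial D$ while $\phi(s,x)\in D$ for all $s\in(\tau,T]$. On $(\tau,T]$ the integrated identity reads $V(\phi(s_1,x))=V(\phi(T,x))+\int_{s_1}^{T}q(\phi(s,x))\,ds$ for $\tau<s_1<T$. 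Letting $s_1\downarrow\tau$, the left side tends to $+\infty$ by condition~3 (as $\phi(s_1,x)\to\phi(\tau,x)\in\partial D$), whereas the right side stays bounded, because the arc $\set{\phi(s,x):\,s\in[\tau,T]}$ is compact and $q$, being continuous on all of $\R^n$, is bounded on it, so $\int_{s_1}^{T}q(\phi(s,x))\,ds\le\int_{\tau}^{T}q(\phi(s,x))\,ds<\infty$. This contradiction shows $x\in D$.

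The crux is the reverse inclusion, and within it the decisive point is that the blow-up of $V$ guaranteed by condition~3 is incompatible with the finiteness of $\int q$ over a compact time-arc; this finiteness relies on $q$ being continuous, hence bounded, up to and including the boundary point $\phi(\tau,x)\in\partial D$, which is precisely why the hypothesis posits $q$ continuous on $\R^n$ rather than merely on $D$. The forward direction is comparatively routine, its only delicate step being the LaSalle-type invariance argument used to deduce $\Omega\subseteq\A$ from the vanishing of $\dot V$ on the limit set.
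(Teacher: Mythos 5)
The paper does not prove this theorem at all: it is quoted as a known result with the citation \cite{vannelli1985maximal}, so there is no in-paper proof to compare against. Judged on its own, your argument is correct and is essentially the classical Vannelli--Vidyasagar proof. Both halves are sound: for $D\subseteq\roa(\A)$, the properness in condition 3 (both alternatives, boundary blow-up and radial unboundedness) makes sublevel sets of $V$ compact in $D$, forward invariance follows from the first-exit-time contradiction, and the LaSalle step $q\equiv 0$ on the $\omega$-limit set correctly forces $\Omega\subseteq\A$; for $\roa(\A)\subseteq D$, the entry-time argument with the integrated identity is exactly right, and you correctly isolate the decisive point — the blow-up of $V$ as $s_1\downarrow\tau$ is incompatible with the boundedness of $\int_\tau^T q(\phi(s,x))\,ds$, which hinges on $q$ being continuous on all of $\R^n$ (the paper indeed amended the hypothesis from $q:D\ra\R$ to $q:\R^n\ra\R$, and your remark explains why that amendment is needed). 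Two small observations: your scoping assumption that $\A$ is compact is actually automatic here, since $\A$ is closed by the paper's standing assumptions and condition 3 forces $\A$ to be bounded ($V\equiv 0$ on $\A$ would contradict $V(x)\ra\infty$ as $\abs{x}\ra\infty$ otherwise), so no generality is lost; and since the paper's definition of $\roa(\A)$ presupposes asymptotic stability of $\A$, a fully self-contained proof would add the routine remark that stability of $\A$ follows from conditions 1--2 via the standard Lyapunov argument (or simply from the paper's Hypotheses (H1)--(H2), under which the theorem is invoked).
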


Theorem \ref{thm: max_lyap} is equivalent to Zubov's theorem \cite{zubov1961methods}  stated below. 

\begin{thm}\label{thm: zubov}
Let $D\subset\R^n$ be an open set containing $\A$. Then $D=\roa(\A)$ if and only if there exists two continuous functions $W:\,D\ra \R$ and \ym{$\eta:\,\R^n\ra \R$} such that the following conditions hold:
 \begin{enumerate}
     \item $0<W(x)<1$ for all $x\in D\setminus\A$ and $W(x)=0$ for all $x\in \A$; 
     \item$ \eta$ is positive definite on $D$ with respect to $\A$;  
     \item for any sufficiently small $c_3>0$, there exist $c_1, c_2>0$ such that $|x|_\A\ge c_3$ implies 
     $W(x)>c_1$ and $\eta(x)>c_2$;  
     \item $W(x)\ra 1$ as $x\ra y$ for any $y\in \partial D$;
     \item $W$ and $\eta$ satisfy (in the conventional differentiable sense)
        \begin{equation}\label{E: zubov}
     -\nabla W(x)\cdot f(x) +\eta(x)(1-W(x)) = 0.
     \end{equation} 
 \end{enumerate}
\end{thm}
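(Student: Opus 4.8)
The plan is to prove the stated equivalence through the logarithmic change of variables $W = 1 - e^{-V}$, with inverse $V = -\ln(1-W)$, which is a $C^1$ diffeomorphism from $[0,\infty)$ onto $[0,1)$ sending $0$ to $0$ and $+\infty$ to $1$. Under this correspondence I expect a function $V$ meeting the hypotheses of Theorem~\ref{thm: max_lyap} to match, one-to-one, a pair $(W,\eta)$ meeting conditions 1)--5), with $q=\eta$. The iff then splits into two halves: transferring Theorem~\ref{thm: max_lyap} for the ``if'' part, and supplying a converse construction for the ``only if'' part.

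For the ``if'' direction, suppose $(W,\eta)$ satisfy 1)--5), so $0\le W<1$ on $D$ and $W\in C^1$ where \eqref{E: zubov} holds. Set $V=-\ln(1-W)\in C^1(D)$. Condition 1) makes $V$ positive definite on $D$ with respect to $\A$. Using $\nabla W = (1-W)\nabla V$, the logarithmic substitution converts the Zubov relation \eqref{E: zubov} into the Lyapunov identity $\nabla V(x)\cdot f(x) = -\eta(x)$, so hypothesis 2) of Theorem~\ref{thm: max_lyap} holds with the continuous, positive definite choice $q=\eta$. Finally condition 4) forces $V(x)\to\infty$ as $x\to\partial D$, which is the properness in hypothesis 3). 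Theorem~\ref{thm: max_lyap} then yields $D=\roa(\A)$.

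For the ``only if'' direction I would construct the pair explicitly. Fix a bounded, continuous, positive definite $\eta$, for instance $\eta(x)=|x|_\A^2/(1+|x|_\A^2)$, and define
\[
V(x)=\int_0^{\infty}\eta(\phi(s,x))\,ds,\qquad W(x)=1-e^{-V(x)}.
\]
Local exponential stability (H2) makes $\eta(\phi(s,x))$ decay exponentially once the trajectory enters the stability neighborhood, so the integral is finite for every $x\in\roa(\A)=D$, giving $0\le W<1$; positivity of $\eta$ off $\A$ then yields condition 1) and, with a short-time lower bound, the coercivity conditions 2)--3). The cocycle identity $\phi(s+t,x)=\phi(s,\phi(t,x))$ gives $V(\phi(t,x))=V(x)-\int_0^t\eta(\phi(u,x))\,du$; differentiating at $t=0$ produces $\nabla V\cdot f=-\eta$, equivalently \eqref{E: zubov} after the transformation, which is condition 5). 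Continuity of $V$ (hence of $W$) would follow from continuous dependence of $\phi$ on the initial state together with dominated convergence.

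The main obstacle is twofold. First, establishing condition 4), namely $W(x)\to 1$ as $x\to\partial D$: this requires $V(x)\to\infty$ as $x$ approaches the boundary of the region of attraction, which rests on the fact that trajectories starting near $\partial\roa(\A)$ remain bounded away from $\A$ for arbitrarily long times (an escape/lower-semicontinuity argument), so that $\int_0^\infty \eta(\phi(s,x))\,ds$ diverges. Second, the $C^1$ regularity of the constructed $V$ that is needed to read \eqref{E: zubov} ``in the conventional differentiable sense'': under merely locally Lipschitz $f$ this smoothness is delicate, and it is precisely what the regularity analysis (and the viscosity-solution viewpoint) in the subsequent sections is designed to address; at this stage I would either invoke the smoothness available in the classical setting or defer the generalized-solution treatment to that analysis.
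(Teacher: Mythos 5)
First, a point of comparison: the paper does not actually prove this theorem. It cites \cite{zubov1961methods} and records the relationship with Theorem~\ref{thm: max_lyap} only in a remark, via the transformation $W=1-\exp(-\alpha V)$. Your skeleton is exactly that intended reading: the ``if'' direction by transporting $(W,\eta)$ to $(V,q)$ through $V=-\ln(1-W)$, and the ``only if'' direction by the construction $V(x)=\int_0^\infty\eta(\phi(s,x))\,ds$, which is precisely the paper's Eq.~\eqref{E: V}; your two flagged obstacles correspond to Lemma~\ref{lem: V}(2) (blow-up at $\partial\roa(\A)$, imported from \cite{liu2023towards}) and to the entire viscosity-solution program of Section~\ref{sec: regularity}. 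So the route is the natural one and consistent with the paper.

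There are, however, concrete gaps. (a) \emph{Unbounded $D$:} in the ``if'' direction you verify hypothesis 3) of Theorem~\ref{thm: max_lyap} only in the form $V(x)\to\infty$ as $x\to\partial D$. When $D$ is unbounded, that theorem also needs $V(x)\to\infty$ as $|x|\to\infty$, and conditions 1)--5) do not yield it: condition 3) gives only a fixed lower bound $W>c_1$, not $W\to 1$, so $V=-\ln(1-W)$ may remain bounded at infinity. Classical formulations of Zubov's theorem add the hypothesis $W(x)\to 1$ as $|x|\to\infty$ for unbounded regions, or argue directly from $\dot W=-\eta(1-W)\le -c_2(1-W)$ away from $\A$ together with forward completeness; as written, your ``if'' direction is incomplete in that case. (b) \emph{Signs:} as printed, \eqref{E: zubov} reads $-\nabla W\cdot f+\eta(1-W)=0$, and your substitution $\nabla W=(1-W)\nabla V$ then gives $\nabla V\cdot f=+\eta$, not $-\eta$. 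You are implicitly using the classical equation $\nabla W\cdot f=-\eta(1-W)$ --- which is the correct one, since $W$ must decrease along trajectories converging to $\A$, and the paper's own remark makes the same silent correction when it derives $\nabla W\cdot f=-\alpha(1-W)q$ --- but in a proof this sign discrepancy should be surfaced, not absorbed. (c) \emph{Regularity:} you rightly note that $C^1$ smoothness of the constructed $V$ is needed to read \eqref{E: zubov} in the conventional differentiable sense, but deferring it is not closing it: under merely locally Lipschitz $f$ differentiability genuinely fails (the paper's Example~\ref{eg: 1d} exhibits a nondifferentiable $U$), so the ``only if'' direction as stated requires the classical $C^1$ hypotheses on $f$ and $\eta$, with uniform exponential domination to differentiate under the integral --- which is where the substance of the proofs in \cite{vannelli1985maximal} and \cite{zubov1961methods} actually lies. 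Your use of (H2) for finiteness of the integral is legitimate within this paper's standing hypotheses, though the classical theorem achieves it for merely asymptotically stable $\A$ by adapting the choice of $\eta$.
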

\begin{rem}
    Based on Hypothesis (H1) and (H2), a typical choice for $\eta$ is given by $\eta(x)=\alpha|x-\xeq|$ for some $\alpha>0$. 
In addition, 
Theorem \ref{thm: max_lyap} and Theorem \ref{thm: zubov} can be related by the following equation
    \begin{equation}\label{E: V2W}
    W(x) = 1 - \exp(-\alpha V(x)), \;\;x\in\roa(\A),
\end{equation}
for some constant $\alpha>0$.  It is easy to verify that $V$ satisfying (\ref{eq:DV1}) implies $\nabla W(x)\cdot f(x)  = -\alpha(1-W(x)) q(x)$, 
 which verifies (\ref{E: zubov}) with $\eta(x)=\alpha q(x)$. \Qed
 \end{rem}

Note that $W$ remains bounded, with its value approaching $1$ as $x$ approaches the boundary. In contrast, the function $V$ in Theorem \ref{thm: max_lyap} approaches infinity as $x$ approaches the boundary. The boundedness can offer significant advantages in data-driven approximations. In particular, it allows us to focus on a desired region of interest where computations occur. However, this modification is not as straightforward as it might appear. To better understand how Zubov's construction can guide the data-driven approach to learning the ROA, we introduce an extension of Zubov's theorem in Section \ref{sec: regularity} and rigorously examine the regularity of the solutions.

\subsection{Differentiability and Viscosity Solution}
As we will see in Section \ref{sec: regularity}, the Zubov equation may not always possess differentiable solutions. One may need to consider a more general sense of solution to resolve. 

\begin{deff}[Viscosity solution]\label{def: vis1}
Define the superdifferential and the subdifferential sets of $v\in C(\R^n)$ at $x$ respectively as
\begin{small}
    \begin{subequations}
\begin{align}
& \partial^+v(x) 
=\left\{p\in \R^n: \;\limsup_{y\ra x}\frac{v(y)-v(x)-p\cdot (y-x)}{|y-x|}\leq 0\right\}\label{E: compare1a},\\
& \partial^-v(x) 
= \left\{q\in \R^n: \;\liminf_{y\ra x}\frac{v(y)-v(x)-q\cdot (y-x)}{|y-x|}\geq 0\right\}\label{E: compare1b}.
\end{align}
\end{subequations}
\end{small}

\noindent A continuous function $v$ of a PDE of the form $F(x,v(x),\nabla v(x))=0$ (possibly encoded with boundary conditions) is a viscosity solution if the following conditions are satisfied:
\begin{itemize}
    \item[(1)] (viscosity subsolution) $F(x, v(x), p)\leq 0$ for all $x\in\R^n$ and for all $p\in \partial^+v(x)$.
    \item[(2)] (viscosity supersolution) $F(x, v(x), q)\geq 0$ for all $x\in\R^n$ and for all $q\in \partial^-v(x)$.
\end{itemize}
\end{deff}
\begin{rem}
More details of viscosity solutions are provided in Appendix \ref{sec: app1}. The concept of viscosity solution relaxes $C^1$ solutions. Note that, at differentiable points,  $\nabla v(x)$ exists and $\{\nabla v(x)\}=\partial^+v(x) = \partial^-v(x)$. In this case, to justify a viscosity solution, we can simply substitute $\nabla v(x)$ and check if $F(x, v(x), \nabla v(x))=0$. 
If $v$ is not differentiable at a given point, then we have to go through the comparisons (1) and (2) in Definition \ref{def: vis1} to verify that $v$ is a viscosity solution.  \Qed 
\end{rem}

\section{An Extension of Zubov's Theorem and Regularity Analysis}\label{sec: regularity}
For the rest of this paper, we will assume that Hypothesis (H1) and (H2) hold and denote $\mathcal{A}:=\{\xeq\}$.

Inspired by \eqref{E: zubov} and its connection with $V$ through Eq. \eqref{E: V2W},  to facilitate data-driven approximation, we explore the following dual form of Zubov's equation \eqref{E: zubov}, namely Zubov's dual equation, 
    \begin{equation}\label{E: dual}
H(x, U(x), \nabla U(x))=0, \;U(\xeq)= 1, 
\end{equation}
where 
\begin{equation}\label{E: H}
    H(x, U(x), p):=- p\cdot f(x)+\eta(x)U(x),\;\;p\in\R^n. 
\end{equation}
For now, we intentionally omit specifying the domain of $U$ and would like to discuss regularity analysis later in this section. 

Note that, on $\roa(\A)$, we always have $U(x)=1-W(x)$. \ymr{We prefer this dual form of Zubov's equation because it allows us to potentially use a time series to approximate the solution through iterations (thereby reducing the amount of required data), as demonstrated in  Section \ref{sec: zubov-Koopman}}.

\subsection{Solution to Zubov's Dual Equation}\label{sec: Zubov_dual}
In this section, we construct the solution to the dual equation \eqref{E: dual} of Zubov's equation and demonstrate that, in a general context, it satisfies \eqref{E: dual} in a viscosity sense. 

Let $\eta$ be positive definite w.r.t. $\A$. Define 
   \begin{equation}
    \label{E: V}
V(x)=\int_0^\infty \eta(\phi(t, x))dt,\quad x\in \R^n, 
\end{equation} 
where if the integral diverges, we let $V(x)=\infty$. Then, we have the following property \cite[Proposition 1]{liu2023towards}. 

\begin{lem}\label{lem: V}
The function $V:\,\R^n\ra\R\cup\set{\infty}$ defined by \eqref{E: V} satisfies the following: 
\begin{enumerate}
    \item $V(x)<\infty$ if and only if $x\in \roa(\A)$;
    \item $V(x)\ra \infty$ as $x\ra \partial \roa(\A)$; 
    \item $V$ is positive definite with respect to $\A$. 
\end{enumerate}
\end{lem}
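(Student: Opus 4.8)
The plan is to prove the three claims about $V(x)=\int_0^\infty \eta(\phi(t,x))\,dt$ by exploiting the stability structure of $\A=\{\xeq\}$ together with the sign properties of $\eta$.

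\medskip

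\noindent\textbf{Claim 1 ($V(x)<\infty \iff x\in\roa(\A)$).} For the forward direction I would argue the contrapositive: if $x\notin\roa(\A)$, then $\phi(t,x)$ does not converge to $\A$, so by asymptotic stability $|\phi(t,x)|_\A$ stays bounded away from zero along some sequence of times. Using a uniform-in-time lower bound on $\eta$ away from $\A$ (which follows from $\eta$ being positive definite and continuous, hence bounded below by a positive constant on the compact annulus $\{c\le |x|_\A\le R\}$), the integrand fails to be integrable, forcing $V(x)=\infty$. The converse is the more substantive direction and relies on Hypothesis (H2): since $\A$ is locally exponentially stable, for $x$ near $\A$ we have $|\phi(t,x)|_\A\le M|x|_\A e^{-ct}$, so $\eta(\phi(t,x))$ decays exponentially once the trajectory enters the exponential-stability neighborhood, and the tail integral converges; for general $x\in\roa(\A)$, the trajectory reaches that neighborhood in finite time, and on the finite initial segment $\eta\circ\phi$ is continuous hence integrable. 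I would split $\int_0^\infty=\int_0^T+\int_T^\infty$ accordingly, with $T$ the (finite) entry time.

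\medskip

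\noindent\textbf{Claim 3 (positive definiteness).} On $\A$, $\phi(t,\xeq)=\xeq$ for all $t$, so $\eta(\phi(t,\xeq))=\eta(\xeq)=0$ and thus $V(\xeq)=0$. For $x\in\roa(\A)\setminus\A$, positive definiteness of $\eta$ gives $\eta(\phi(t,x))\ge 0$ with strict positivity for small $t$ (since $\phi(0,x)=x\notin\A$ and $\phi$ is continuous), so the integral is strictly positive. This step is essentially immediate given Claim 1.

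\medskip

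\noindent\textbf{Claim 2 ($V(x)\to\infty$ as $x\to\partial\roa(\A)$).} This is the main obstacle. The difficulty is that pointwise finiteness of $V$ on $\roa(\A)$ does not by itself control the blow-up rate near the boundary. My plan is to use a comparison/cocycle argument based on the identity $V(\phi(s,x))=\int_0^\infty\eta(\phi(t,\phi(s,x)))\,dt=\int_s^\infty\eta(\phi(\tau,x))\,d\tau$, which yields $V(x)=\int_0^s\eta(\phi(\tau,x))\,d\tau+V(\phi(s,x))$, i.e. $V$ is monotonically decreasing along trajectories. Suppose for contradiction that there is a sequence $x_k\to \bar{x}\in\partial\roa(\A)$ with $V(x_k)\le K<\infty$ bounded. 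I would fix a small sublevel-type neighborhood $N$ of $\A$ on which $V$ is bounded above by some $K'$; the bounded-energy assumption combined with the uniform lower bound on $\eta$ outside $N$ forces the trajectories $\phi(\cdot,x_k)$ to reach $N$ within a uniformly bounded time $T$. By continuous dependence of $\phi$ on initial conditions on the compact time interval $[0,T]$, the limiting trajectory $\phi(\cdot,\bar{x})$ must also enter $N$, hence $\bar{x}\in\roa(\A)$, contradicting $\bar{x}\in\partial\roa(\A)$ (the ROA being open). The care needed is in establishing the uniform entry-time bound and invoking continuous dependence only on a finite horizon, since global-in-time continuity near the boundary cannot be assumed.
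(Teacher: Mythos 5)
The paper does not actually prove this lemma: it is quoted directly from \cite[Proposition 1]{liu2023towards}, so the comparison is with the standard argument your outline reproduces. Within that outline, Claim 1 has two genuine soft spots. First, in the forward direction, ``bounded away from zero along some sequence of times'' is not enough: a nonnegative continuous integrand that exceeds $c$ only on a sparse sequence of times can perfectly well be integrable. What local asymptotic stability actually gives is stronger, and is exactly what the divergence argument needs: there is a $\delta>0$ such that any trajectory entering $\B(\A,\delta)$ converges to $\A$, so $x\notin\roa(\A)$ forces $\abs{\phi(t,x)}_\A\ge\delta$ for \emph{all} $t\ge 0$. Second, both directions silently use quantitative properties of $\eta$ that continuity plus positive definiteness do not supply. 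Your lower bound on $\eta$ lives on a compact annulus $\set{c\le \abs{y}_\A\le R}$, but a trajectory outside the ROA need not remain within distance $R$ of $\A$: it may escape to infinity, where a merely positive definite $\eta$ can decay fast enough to make $V$ finite off $\roa(\A)$. Symmetrically, in the converse direction, exponential decay of $\abs{\phi(t,x)}_\A$ yields exponential decay of $\eta(\phi(t,x))$ only under a modulus bound such as $\eta(y)\le L\abs{y}_\A$ near $\A$; for the continuous, positive definite choice $\eta(y)\approx -1/\log\abs{y}_\A$ near $\xeq$ one gets $\eta(\phi(t,x))\approx 1/(ct)$, so that $V\equiv\infty$ even on $\roa(\A)\setminus\A$ and Claim 1 fails. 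Both issues are removed by assuming $\eta$ locally Lipschitz and bounded below by a positive constant on $\set{y:\abs{y}_\A\ge c_3}$ (compare condition 3) of Theorem \ref{thm: zubov}), which the paper's typical choice $\eta(x)=\alpha\abs{x-\xeq}$ satisfies; your proof should invoke such a condition explicitly rather than the compact annulus.

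Your Claim 2 argument is correct once that uniform lower bound on $\eta$ is in place: the entry-time bound $T\le K/c$ is then immediate, and on the finite horizon $[0,T]$ the trajectories issued from the convergent sequence stay in a compact set, so continuous dependence applies as you intend. There is, however, a shorter route that avoids the contradiction and the entry-time estimate altogether: $V=\sup_{T>0}\int_0^T\eta(\phi(t,\cdot))\,dt$ is a supremum of continuous functions (each finite-horizon integral is continuous in the initial condition), hence lower semicontinuous; since $\roa(\A)$ is open, every $\bar{x}\in\partial\roa(\A)$ lies outside $\roa(\A)$, so part 1) gives $V(\bar{x})=\infty$, and lower semicontinuity yields $V(x)\ra\infty$ as $x\ra\bar{x}$ directly. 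Claim 3 is fine as stated, and, as you note, needs neither Claim 1 nor any of the extra hypotheses above.
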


\begin{deff}
Let $h\in C_b(\R^n)$. We further define 
    \begin{equation}\label{E: U}
    U_h(x)=
    \begin{cases}
        &\exp\set{-V(x)}h(\phi_\infty(x)),\;\text{if}\;V(x)<\infty,\\
        & 0,\;\;\text{otherwise},
    \end{cases}
\end{equation}
 where $\phi_\infty(x):=\lim\limits_{t\ra\infty}\phi(t, x)$. For test function $h(x)=\mathds{1}(x)$, where $\mathds{1}(x)\equiv 1$ for all $x$,  we denote $U:=U_\mathds{1}$ for simplicity. 
\end{deff}

\begin{rem}
    The notion of $h(\phi_\infty(x))$ seems redundant in that, given $V(x)<\infty$, by 1) of Lemma \ref{lem: V}, we have $x\in\roa(\A)$ and $h(\phi_\infty(x))=h\circ\lim\limits_{t\ra\infty}\phi(t, x)=h(\xeq)$. However, we still keep it in \eqref{E: U} for consistency when using a time series to approximate in Section \ref{sec: zubov-Koopman}. \Qed
\end{rem}

\ymr{We would like to show that the construction in \eqref{E: U} is a solution to \eqref{E: dual}. Getting the conventional continuously differentiable solutions to \eqref{E: dual} or \eqref{E: zubov} depends on the differentiability of $f$ (and hence the differentiability of $\phi(t, x)$ w.r.t. $x$) and $\eta$. However, this may not always be the case given the general assumptions on $f$ and $\eta$. In view of numerical approximations, it is not appropriate to construct a convergent sequence with respect to the $C^1$ uniform norm (e.g. \cite[Chapter 3]{farsi2023model}). Based on this viscosity property analysis, this paper provides guidance for constructing numerical solutions through a Zubov-Koopman operator learning framework in subsequent sections.} We provide a simple example below to demonstrate that we necessarily need to consider viscosity solutions to \eqref{E: dual}. 

\begin{exm}\label{eg: 1d}
Consider a simple dynamical system
    $$\dot{\xb}(t)=-\xb(t) + \xb^3(t), \;\;\xb(0)=x, $$
and a convex, positive definite function $\eta(x)=|x|$.\ymr{Then, it can be verified that \begin{small}
    $V(x)=\frac{1}{2}\log\left(\frac{1+|x|}{1-|x|}\right)$ for $x\in(-1,1)$,
\end{small} \begin{small}$U(x)=\sqrt{\frac{1-|x|}{1+|x|}}$\end{small} for $x\in(-1,1)$,  and $U(x)=0$ elsewhere.} Clearly, $(x-x^3)U'(x)+|x|U(x)=0$ for all $x\in\R\setminus\{0, \pm1\}$ and $U$ is not differentiable at $0$ and $\pm1$. Given the fact that $U$ is concave near $0$,  we also have that $\partial^-U(0)=\emptyset$ by \cite[Chapter II, Proposition 4.7]{bardi1997optimal}. On the other hand, $\partial^+U(0)=[-1,1]$. For all $p\in \partial^+U(0)$, we have that $H(0, U(0), p)\leq 0$, which show that $U$ satisfies \eqref{E: dual} at $0$ in a viscosity sense. Similar proofs can be done at $x=\pm 1$. \Qed
\end{exm}

\begin{lem}\label{prop: max_lyapunov}
Let $F(x,  p):=-p\cdot f(x)-\eta(x).$
    Then,  on $\roa(\A)$, the function $V(x)$ in \eqref{E: V} is a viscosity solution to $F(x, \nabla V(x))=0$ with $V(\xeq)=0$. 
\end{lem}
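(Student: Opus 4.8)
The plan is to recognize $V$ as a value function and to exploit its \emph{dynamic programming principle}, combined with the standard test-function characterization of the sub/super-differentials (as in the cited \cite[Chapter II]{bardi1997optimal}): $p\in\partial^+V(x)$ if and only if there is some $\psi\in C^1$ with $\nabla\psi(x)=p$ such that $V-\psi$ attains a local maximum at $x$, and dually $q\in\partial^-V(x)$ via a local minimum. Before invoking Definition \ref{def: vis1}, I would first record that $V$ is finite and continuous on the open set $\roa(\A)$: finiteness is Lemma \ref{lem: V}, and continuity I would obtain by splitting $V(x)=\int_0^T\eta(\phi(s,x))\,ds+\int_T^\infty\eta(\phi(s,x))\,ds$, using continuous dependence of $\phi$ on $x$ for the finite-horizon term and local exponential stability (H2) to make the tail uniformly small as $x$ ranges over a compact subset of $\roa(\A)$. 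This continuity is what lets us apply the viscosity machinery on $\roa(\A)$.

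The key structural identity is the cocycle/semigroup relation
\begin{equation}\label{E: DPP}
V(x)=\int_0^t \eta(\phi(s,x))\,ds+V(\phi(t,x)),\qquad x\in\roa(\A),\ t\ge 0,
\end{equation}
which I would derive directly from the flow property $\phi(s,\phi(t,x))=\phi(s+t,x)$ and the substitution $\tau=s+t$ applied to $V(\phi(t,x))=\int_0^\infty\eta(\phi(s,\phi(t,x)))\,ds$. This is legitimate precisely because $\roa(\A)$ is forward invariant, so $\phi(t,x)\in\roa(\A)$ and both $V(x)$ and $V(\phi(t,x))$ are finite.

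For the subsolution inequality, I would fix $x\in\roa(\A)$ and $p\in\partial^+V(x)$ and take a test function $\psi\in C^1$ with $\nabla\psi(x)=p$, $\psi(x)=V(x)$, and $V\le\psi$ near $x$. Evaluating along the trajectory gives $V(\phi(t,x))\le\psi(\phi(t,x))$ for small $t>0$; substituting into \eqref{E: DPP} and using $\psi(x)=V(x)$ yields $\psi(x)-\psi(\phi(t,x))\le\int_0^t\eta(\phi(s,x))\,ds$. Dividing by $t$ and letting $t\downarrow 0$, the left-hand side tends to $-\nabla\psi(x)\cdot f(x)=-p\cdot f(x)$ (chain rule along the flow, using $\partial_t\phi=f(\phi)$) while the right-hand side tends to $\eta(x)$, giving $-p\cdot f(x)-\eta(x)\le 0$, i.e. $F(x,p)\le 0$. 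The supersolution inequality is entirely symmetric: a $C^1$ test function touching $V$ from below reverses the inequality in \eqref{E: DPP} and yields $F(x,q)\ge 0$ for every $q\in\partial^-V(x)$. I would also note that at $x=\xeq$ both conditions hold trivially, since $f(\xeq)=\zero$ and $\eta(\xeq)=0$ force $F(\xeq,\cdot)\equiv 0$, consistent with Example \ref{eg: 1d} where one of the differential sets may be empty at the equilibrium.

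The main obstacle is not the viscosity argument itself, which is the standard dynamic-programming route, but the preliminary regularity: ensuring $V$ is genuinely continuous (not merely finite) on all of $\roa(\A)$, including away from $\A$. This is where local exponential stability (H2) is essential, as it is what guarantees that the tail $\int_T^\infty\eta(\phi(s,x))\,ds$ decays uniformly and hence that the finite-horizon continuous approximations converge locally uniformly; without such a uniform tail estimate, the continuity needed to pass to the limit $t\downarrow 0$ above would not be available.
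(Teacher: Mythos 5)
Your proposal is correct and follows essentially the same route as the paper: the paper's proof likewise invokes the dynamic programming principle $V(x)=\int_0^t \eta(\phi(s,x))\,ds+V(\phi(t,x))$ (citing \cite[Proposition 1]{liu2023towards} for finiteness and this identity) and then applies the $C^1$ test-function characterization of $\partial^\pm V$ at a local maximum/minimum of $V-\nu$, dividing by $t$ and letting $t\downarrow 0$ exactly as you do. The only difference is that you fill in details the paper delegates to the citation (the tail estimate for continuity via (H2) and the derivation of the dynamic programming identity from the flow property), which is a harmless elaboration rather than a different argument.
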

\begin{proof}
It is clear that $V\in C(\R^n)$. On $\roa(\A)$, by Lemma \ref{lem: V} and \cite[Proposition 1]{liu2023towards}, we have  
$V(x)<\infty$ as well as the  dynamic programming principle $V(x) =\int_0^t \eta(\phi(s, x)) ds + V(\phi(t, x))$ for all $x\in \roa(\A)$ and $t>0$. We now show that $V$ is a viscosity solution on $\roa(\A)$ using the equivalent conditions introduced in Appendix \ref{sec: app1}. 
Let $\nu\in C^1$ and $x$ be a local maximum point of $V-\nu$. Then $V(x)-V(z)\geq \nu(x)-\nu(z)$ for all $z\in\oball(x,r)$. 
For $t$ sufficiently small, we have $\phi(t,x)\in\oball(x,r)$. Therefore,
\begin{small}
\begin{equation}\label{E: small_t}
    \begin{split}
    \nu(x)-\nu(\phi(t, x))
    \leq & V(x)- V(\phi(t, x))\\
        = & \int_0^t \eta(\phi(s, x))ds + V(\phi(t,x)) - V(\phi(t,x)). 
    \end{split}
\end{equation}
\end{small}

\noindent Considering the infinitesimal behavior on both sides of \eqref{E: small_t}, i.e., dividing both sides by $t$ and taking the limit as $t\downarrow 0$,  we have $-\nabla \nu(x)\cdot f(x) \leq \eta(x)$. The case when $x$ is a local minimum of $V-\nu$ can be proved in the same manner. Then, $V$ is a viscosity solution to $F(x, \nabla V(x))=0$ on $\roa(\A)$. 

\end{proof}

\begin{rem}
From the above proof, one may notice that the sign of $F(x,p)$ (or similarly, $H(x, U(x), p)$ as in \eqref{E: dual_test}) matters. In other words, $V$ is not a viscosity solution to $-F(x,\nabla V(x))=0$. 
    \Qed
\end{rem}

\begin{thm}[Viscosity Solution]\label{thm: uniqueness_U}
For any $h\in C_b(\R^n)$, the $U_h$ defined in \eqref{E: U} is a viscosity
solution to 
\begin{equation}\label{E: dual_test}
    H(x, U_h(x), \nabla U_h(x))=0, \;\;U_h(\xeq)=h(\xeq)
\end{equation}
on $\R^n$, where $H$ is defined in \eqref{E: H}.   
\end{thm}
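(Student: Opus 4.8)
The plan is to verify the viscosity sub- and supersolution inequalities for $U_h$ region by region, closely following the test-function argument used for $V$ in Lemma~\ref{prop: max_lyapunov}. First I would record two preliminary facts. (i) $U_h\in C(\R^n)$: on $\roa(\A)$ one has $\phi_\infty(x)=\xeq$, so $U_h(x)=h(\xeq)e^{-V(x)}$, which is continuous there, and by Lemma~\ref{lem: V}(2) $V(x)\to\infty$ as $x\to\partial\roa(\A)$, so $U_h(x)\to 0$, matching the value $0$ assigned on $\R^n\setminus\roa(\A)$. (ii) A dynamic programming identity for $U_h$: using the principle $V(x)=\int_0^t\eta(\phi(s,x))\,ds+V(\phi(t,x))$ established inside the proof of Lemma~\ref{prop: max_lyapunov}, together with the forward invariance of $\roa(\A)$, one gets $U_h(x)=e^{-\int_0^t\eta(\phi(s,x))\,ds}\,U_h(\phi(t,x))$ for every $x\in\roa(\A)$ and $t>0$. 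The boundary condition is immediate, since $V(\xeq)=0$ and $\phi_\infty(\xeq)=\xeq$ give $U_h(\xeq)=h(\xeq)$. Recall $\R^n=\roa(\A)\,\cup\,\inte(\R^n\setminus\overline{\roa(\A)})\,\cup\,\partial\roa(\A)$, and I would treat these three pieces separately.

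For $x\in\roa(\A)$ I would take $\nu\in C^1$ with $x$ a local maximum of $U_h-\nu$, so $U_h(x)-U_h(z)\ge\nu(x)-\nu(z)$ on some $\oball(x,r)$. Choosing $z=\phi(t,x)\in\oball(x,r)$ for small $t$ and substituting the identity in (ii) yields
\[
\nu(x)-\nu(\phi(t,x))\ \le\ U_h(x)\bigl(1-e^{\int_0^t\eta(\phi(s,x))\,ds}\bigr).
\]
Dividing by $t$ and letting $t\downarrow 0$, using $\frac{d}{dt}\big|_{t=0}\nu(\phi(t,x))=\nabla\nu(x)\cdot f(x)$ and $\frac{1-e^{\int_0^t\eta(\phi(s,x))\,ds}}{t}\to-\eta(x)$ (as $\eta$ is continuous and $\phi(0,x)=x$), I obtain $-\nabla\nu(x)\cdot f(x)+\eta(x)U_h(x)\le 0$, i.e. the subsolution inequality for $H$ in \eqref{E: H}. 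The local-minimum case is identical with reversed inequalities and gives the supersolution inequality. This is the exact analogue of Lemma~\ref{prop: max_lyapunov}, now carrying the extra multiplicative factor $U_h(x)$ produced by the exponential.

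The remaining two pieces are where the genuinely new work lies. On $\inte(\R^n\setminus\overline{\roa(\A)})$ we have $U_h\equiv0$ locally, so $U_h$ is smooth there with $\nabla U_h=0$ and $H(x,0,0)=0$, and both inequalities hold trivially. The main obstacle is the boundary $\partial\roa(\A)$, where $U_h=0$ but $U_h$ is typically non-differentiable. The key preliminary I would establish is that $\R^n\setminus\roa(\A)$ is forward invariant: by the semigroup property $\phi(t,\phi(s,x))=\phi(t+s,x)$ one has $\phi(s,x)\in\roa(\A)\iff\lim_{t}\phi(t,x)=\xeq\iff x\in\roa(\A)$, so a point outside $\roa(\A)$ cannot flow into it. Hence for $x_0\in\partial\roa(\A)\subseteq\R^n\setminus\roa(\A)$ we get $U_h(\phi(t,x_0))=0=U_h(x_0)$ for all $t\ge0$. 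For a test function $\nu$ with $U_h-\nu$ having a local maximum at $x_0$, evaluating at $z=\phi(t,x_0)$ gives $0=U_h(\phi(t,x_0))-U_h(x_0)\le\nu(\phi(t,x_0))-\nu(x_0)$; dividing by $t\downarrow0$ yields $\nabla\nu(x_0)\cdot f(x_0)\ge0$, i.e. $H(x_0,0,\nabla\nu(x_0))=-\nabla\nu(x_0)\cdot f(x_0)\le0$, the subsolution inequality (using $\eta(x_0)U_h(x_0)=0$). A local minimum reverses the inequality and gives the supersolution inequality, so both hold at the boundary.

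I expect the boundary analysis to be the crux. It rests on two points that must be checked carefully: (a) the continuity of $U_h$ up to $\partial\roa(\A)$, which follows from the blow-up $V\to\infty$ in Lemma~\ref{lem: V}(2); and (b) the forward invariance of the complement, which pins $U_h$ at $0$ along trajectories emanating from boundary points and is precisely what makes the test-function inequalities collapse to the correct one-sided derivative conditions. A detail worth verifying is that the whole argument is insensitive to the sign of $h(\xeq)$, since the limit computation only uses that $U_h(x)$ is a fixed scalar multiple of $e^{-V(x)}$; the degenerate case $h(\xeq)=0$ simply gives $U_h\equiv0$, a trivial solution of \eqref{E: dual_test}.
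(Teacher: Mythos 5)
Your proof is correct, but it takes a genuinely different route from the paper's. Inside $\roa(\A)$, the paper does not redo the test-function argument at all: after splitting off the trivial case $h(\xeq)=0$ exactly as you do, it observes that $-V$ is a viscosity solution of $-F(x,\nabla V(x))=0$ (Lemma \ref{prop: max_lyapunov}) and pushes this through the change-of-unknown stability result \cite[Chapter II, Proposition 2.5]{bardi1997optimal} applied to $U_h(x)=\exp(-V(x))h(\xeq)$; you instead re-derive the inequalities directly from the semigroup identity $U_h(x)=e^{-\int_0^t\eta(\phi(s,x))\,ds}\,U_h(\phi(t,x))$ (which is Proposition \ref{prop: semigroup_eigenfunction} in disguise), and your limit computation, including the sign bookkeeping at maxima and minima and the insensitivity to the sign of $h(\xeq)$, is right. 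The real divergence is at $\partial\roa(\A)$: the paper notes that $U_h$ satisfies the equation a.e.\ on $\R^n\setminus\roa(\A)$, mollifies with $\rho_\eps$, uses the linearity of $H$ in $p$ to make the commutator terms vanish uniformly, and concludes by the stability of viscosity solutions under uniform convergence \cite[Chapter II, Proposition 2.2]{bardi1997optimal}; you instead prove forward invariance of $\R^n\setminus\roa(\A)$, so that $U_h(\phi(t,x_0))\equiv 0$ along boundary trajectories and both viscosity inequalities collapse to the sign conditions $\nabla\nu(x_0)\cdot f(x_0)\ge 0$ (resp.\ $\le 0$), which is exactly what is needed since $U_h(x_0)=0$ makes $H(x_0,U_h(x_0),p)=-p\cdot f(x_0)$ there. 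Your route is more elementary and self-contained: it needs only continuity of $U_h$ across the boundary (via Lemma \ref{lem: V}) and the flow property, and it exploits the structural fact that $H$ depends on $p$ only through $p\cdot f(x)$, so probing along the single direction $f(x_0)$ settles every test function. What the paper's mollification template buys is generality and reuse: it does not depend on the invariance structure of $\roa(\A)$, it tolerates inhomogeneous right-hand sides, and essentially the same argument is recycled in Theorems \ref{thm: Zubov_dual_recast} and \ref{thm: feynman-kac_extension} and in the perturbed setting of \cite{camilli2001generalization}, whereas your trajectory argument is tied to the specific fact that $U_h$ vanishes identically on the forward-invariant complement. Both proofs are complete for the statement at hand.
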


\begin{proof}
It is clear that $U_h\in C(\R^n)$ and $U_h(\xeq)=\exp\set{-0}h(\xeq)=h(\xeq)$. For $h$ with $h(\xeq)=0$, $U_h\equiv 0$ is automatically a differentiable and, hence, a viscosity solution to \eqref{E: dual_test}. We verify the case when $h(\xeq)\neq 0$. 

On $\roa(\A)$, since $V(x)<\infty$ by Lemma \ref{lem: V}, the inverse function ($-\log(U_h(x)/h(\xeq))$) of $U_h(x)=\exp(-V(x))h(\xeq)$ is well defined.  Since $-V$ is a viscosity solution to $-F(x, \nabla V)=0$, where $F$ is defined in Lemma \ref{prop: max_lyapunov}, by \cite[Chapter 2, Proposition 2.5]{bardi1997optimal}, we immediately have that
$U_h$ is a viscosity solution of 
$-F(x, -\log'(U_h(x)/h(\xeq))\nabla U_h(x)) = -\frac{1}{U_h(x)}\nabla U_h(x)\cdot f(x) +\eta(x)=0$. This implies that $U_h$ is a viscosity solution of  \eqref{E: dual_test} in $\roa(\A)$. For $x\in\R^n\setminus\overline{\roa(\A)}$, $u(x)\equiv 0$ is always a viscosity solution.  

It suffices to show the viscosity property on $\partial\roa(\A)$. 
Note that $U_h$ may not be differentiable on $\partial \roa(\A)$. However,  by the famous Rademacher’s Theorem \cite{evans2010partial},  $U_h$ is differentiable a.e. on each subdomain $\Omega\subseteq\R^n\setminus\roa(\A)$. 
Equivalently, 
    $U_h$ satisfies 
    \begin{equation}\label{E: dual_ae}
         \eta(x)U_h(x)+H(x, \nabla U_h(x))=0,\;\;\text{a.e. in}\; \R^n\setminus\roa(\A)
    \end{equation}
    in the conventional sense, where $ H(x, p)=-p\cdot f(x)$. 
    Now 
    we  introduce a smooth mollifier $\rho\in C_0^\infty(\R^n)$ with compact support $\oball(\zero, 1)$ 
    such that $\int_{\R^n}\rho(y)dy=1$, as well as a kernel $\rho_\eps(x)=\frac{1}{\eps^n}\rho\left(\frac{x}{\eps}\right)$ for $\eps>0$. Then we define the convolution for $u_h$ by $u_h^\eps(x): = (U_h * \rho_\eps)(x):=\int_{\R^n} U_h(y)\rho_\eps(x-y)dy$, 
    and similarly for $\nabla u_h^\eps(x)=(\nabla U_h * \rho_\eps)(x)$. It is clear that $u_h^\eps\ra U_h$ locally uniformly in $\R^n$. Convolve both side of \eqref{E: dual_ae} with $\rho_\eps$, then, 
    \begin{small}
            \begin{equation}
    \begin{split}
                &\eta(x)u_h^\eps(x)+\int_{\R^n}H(y, \nabla U_h(y)\rho_\eps(x-y)dy\\
                =&\int_{\R^n} U_h(y)(\eta(x)-\eta(y))\rho_\eps(x-y)dy, \;\;\forall x\in\R^n\setminus\roa(\A). 
    \end{split}
    \end{equation}
    \end{small}

    \noindent     Since      $H$ is also linear in $p$ for any fixed $x$, it follows that, for all $x\in\R^n\setminus\roa(\A)$, 
    \begin{small}
          \begin{equation}\label{E: converge_1}
        \begin{split}
                &\eta(x)u_h^\eps(x)+H(x, \nabla u_h^\eps(x))\\
                = &\int_{\R^n} U_h(y)(\eta(x)-\eta(y))\rho_\eps(x-y)dy\\
                & +\int_{\R^n} (H(x,\nabla U_h(y))-H(y,\nabla U_h(y)))\rho_\eps(x-y)dy. 
    \end{split}
    \end{equation}  
    \end{small}
    
    \noindent Note that  $u_h^\eps$ is differentiable, which is automatically a viscosity solution to the above equation. Since the R.H.S. of \eqref{E: converge_1} converges to $0$ uniformly, by \cite[Chapter II, proposition 2.2.]{bardi1997optimal}, it follows that $U_h$ is a  viscosity solution to \eqref{E: dual_test} on $\R^n\setminus\roa(\A)$, which completes the proof. 
\end{proof}

The following corollary shows the connection between \eqref{E: dual} and \eqref{E: zubov} in the viscosity sense. The proof follows the same procedure as outlined above. 
We hence do not repeat. 
\begin{cor}
    For any $h\in C_b(\R^n)$, let $W_h=1-U_h$, where $U_h$ is defined in \eqref{E: U}. Define 
        \begin{equation}\label{E: Z_function}
         Z(x, W_h(x), p):=- p\cdot f(x)+\eta(x)(1-W_h(x)),\;\;p\in\R^n.
    \end{equation}
Then, $W_h$ is a viscosity solution to 
       \begin{equation*}
        Z(x, W_h(x), \nabla W_h(x))=0, \;\;W_h(\xeq)=1-h(\xeq)
    \end{equation*}
on $\R^n$ if and only if $U_h$ is a viscosity solution to \eqref{E: dual_test}. This particularly holds for $h=\mathds{1}$. \end{cor}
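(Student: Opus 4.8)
The plan is to deduce the corollary from Theorem~\ref{thm: uniqueness_U} by transporting the viscosity property of $U_h$ through the affine, strictly decreasing change of dependent variable $\Phi(s)=1-s$, exactly as the proof of Theorem~\ref{thm: uniqueness_U} transported the viscosity property of $V$ through $V\mapsto \exp(-V)h(\xeq)$ via \cite[Chapter II, Proposition 2.5]{bardi1997optimal}. Since $U_h\in C(\R^n)$ we immediately get $W_h=1-U_h\in C(\R^n)$ and $W_h(\xeq)=1-U_h(\xeq)=1-h(\xeq)$, matching the prescribed boundary value. The pivotal algebraic fact is the identity $1-W_h(x)=U_h(x)$, which is what makes the zeroth-order term of the transformed equation reproduce the term $\eta(x)(1-W_h(x))$ appearing in $Z$ in \eqref{E: Z_function}.

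First I would record how the generalized differentials transform under $\Phi$. Because $\Phi'\equiv -1$, substituting $W_h=1-U_h$ in the definitions \eqref{E: compare1a}--\eqref{E: compare1b} gives $\partial^+ W_h(x)=-\partial^- U_h(x)$ and $\partial^- W_h(x)=-\partial^+ U_h(x)$: the decreasing map reflects each generalized differential through the origin and interchanges the super- and sub-differentials. Combined with $1-W_h=U_h$ and the reflection $p\mapsto -p$ on covectors, this turns the subsolution test for $Z$ against $\partial^+ W_h$ into the supersolution test for $H$ against $\partial^- U_h$, and symmetrically. Since $\Phi$ is an affine bijection, each implication is reversible, which is precisely what yields the ``if and only if.''

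I would then verify the correspondence region by region, mirroring Theorem~\ref{thm: uniqueness_U}. On $\roa(\A)$ the representation $W_h=1-\exp(-V)h(\xeq)$ (with $V$ from \eqref{E: V}) lets me carry over the viscosity property of $V$ established in Lemma~\ref{prop: max_lyapunov}. On $\R^n\setminus\overline{\roa(\A)}$ both $U_h\equiv 0$ and $W_h\equiv 1$ are constant and hence classical solutions. On the non-differentiability set $\partial\roa(\A)$ I would reuse the mollification/stability argument: convolve the almost-everywhere identity with the kernel $\rho_\eps$, exploit linearity in $p$, and pass to the limit through \cite[Chapter II, Proposition 2.2]{bardi1997optimal}. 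The special case $h=\mathds{1}$ is covered because $\mathds{1}\in C_b(\R^n)$.

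The main obstacle is the sign bookkeeping forced by the decreasing transformation. As stressed in the remark after Lemma~\ref{prop: max_lyapunov}, viscosity solutions are sign-sensitive, so the reflection and the super/sub interchange $\partial^\pm W_h=-\partial^\mp U_h$ must be aligned with the sign convention used in $Z$ so that the subsolution condition for $W_h$ maps \emph{exactly} onto the supersolution condition for $U_h$ (and conversely); a misalignment would only produce one-sided inequalities instead of the equivalence. The second delicate point, as in Theorem~\ref{thm: uniqueness_U}, is the boundary $\partial\roa(\A)$, where neither function is differentiable and the correspondence must be established through mollification rather than by pointwise substitution.
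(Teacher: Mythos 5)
Your strategy---transporting the viscosity property of $U_h$ through the decreasing affine map $s\mapsto 1-s$ via the pointwise identities $\partial^+W_h(x)=-\partial^-U_h(x)$ and $\partial^-W_h(x)=-\partial^+U_h(x)$---is legitimate, and in fact more direct than the paper's proof (which is a one-line deferral to re-running the region-by-region mollification of Theorem~\ref{thm: uniqueness_U}); done correctly, your route needs no mollification and no case analysis at all, since the differential swap is a global pointwise fact. But your pivotal alignment claim fails for the $Z$ of \eqref{E: Z_function}, and it fails in exactly the way you warned against in your last paragraph. Carry out the substitution: for $p\in\partial^+W_h(x)$, set $q=-p\in\partial^-U_h(x)$. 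The supersolution test for $H$, namely $-q\cdot f(x)+\eta(x)U_h(x)\geq 0$, becomes $p\cdot f(x)+\eta(x)(1-W_h(x))\geq 0$, i.e., the subsolution test for $G(x,W,p):=-p\cdot f(x)-\eta(x)(1-W(x))$, \emph{not} for $Z(x,W,p)=-p\cdot f(x)+\eta(x)(1-W(x))$. The decreasing change of unknown reflects the gradient slot \emph{and} reverses the orientation of the Hamiltonian, but it does not flip the sign of the zeroth-order term: the transported Hamiltonian is $G(x,W,p)=-H(x,1-W,-p)$, whose zeroth-order term has the opposite sign from the printed $Z$. Your sentence ``turns the subsolution test for $Z$ against $\partial^+W_h$ into the supersolution test for $H$ against $\partial^-U_h$'' asserts the equivalence of $q\cdot f(x)+\eta(x)U_h(x)\leq 0$ and $-q\cdot f(x)+\eta(x)U_h(x)\geq 0$, which are different half-spaces of $q$ whenever $\eta(x)U_h(x)>0$. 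Note that the proof of Theorem~\ref{thm: uniqueness_U} models the correct bookkeeping: it first passes from $V$ (solution of $F=0$) to $-V$ (solution of $-F=0$) before applying the increasing exponential map; your $\Phi(s)=1-s$ requires the same orientation reversal.

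The mismatch is not a formality that the rest of your argument can absorb: wherever $W_h$ is differentiable in $\roa(\A)\setminus\A$ one has $\nabla W_h\cdot f=-\eta\,(1-W_h)$ classically, hence $Z(x,W_h(x),\nabla W_h(x))=2\eta(x)U_h(x)$, which is strictly positive for $h=\mathds{1}$, so the subsolution half of Definition~\ref{def: vis1} fails outright at such points. You can check this on the paper's own Example~\ref{eg: 1d}: with $W(x)=1-\sqrt{(1-|x|)/(1+|x|)}$ on $(0,1)$, $Z(x,W(x),W'(x))=2|x|U(x)\neq 0$. (The sign inconsistency originates in the paper itself: the remark after Theorem~\ref{thm: zubov} derives $\nabla W\cdot f=-\alpha(1-W)q$, which is incompatible with \eqref{E: zubov} as printed, and the corollary's $Z$ inherits that typo; a correct statement replaces $Z$ by $-p\cdot f(x)-\eta(x)(1-W_h(x))$.) With that corrected Hamiltonian, your argument closes cleanly and yields the ``if and only if'' exactly as you outline---and your region-by-region paragraph (Lemma~\ref{prop: max_lyapunov} on $\roa(\A)$, constants outside, mollification on $\partial\roa(\A)$) then becomes redundant, since the differential-swap equivalence holds at every point of $\R^n$ simultaneously. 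As written, however, the proof purports to establish a claim that is false at almost every point of the ROA, so the central step is a genuine error rather than a recoverable slip.
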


\begin{thm}\label{thm: unique}
    Suppose that $\eta$ and $h$ are also locally Lipschitz continuous, then $U_h$ is the unique bounded viscosity solution to \eqref{E: dual_test}. 
\end{thm}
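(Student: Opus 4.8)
The plan is to establish uniqueness via a standard comparison principle argument for viscosity solutions of the first-order stationary PDE \eqref{E: dual_test}. The equation $H(x,U,p) = -p\cdot f(x) + \eta(x)U = 0$ is a Hamilton-Jacobi equation that is linear in both $U$ and $p$, with the zeroth-order term $\eta(x)U$ providing the crucial monotonicity (``proper'') structure when $\eta \geq 0$. The added hypothesis that $\eta$ and $h$ are locally Lipschitz upgrades the regularity just enough to control the modulus of continuity of the Hamiltonian in $x$, which is exactly what a comparison argument needs.

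First I would suppose $U_1$ and $U_2$ are two bounded viscosity solutions to \eqref{E: dual_test} and aim to show $U_1 \equiv U_2$ by proving $\sup_{\R^n}(U_1 - U_2) \leq 0$ and then exchanging roles. The standard device is the \emph{doubling of variables}: for $\eps, \sigma > 0$ consider
\begin{equation*}
\Phi_{\eps,\sigma}(x,y) = U_1(x) - U_2(y) - \frac{|x-y|^2}{2\eps} - \sigma\big(\langle x\rangle + \langle y\rangle\big),
\end{equation*}
where $\langle x\rangle = \sqrt{1+|x|^2}$ is a coercive penalization that forces the supremum to be attained at an interior maximizer $(x_{\eps,\sigma}, y_{\eps,\sigma})$ despite working on the unbounded domain $\R^n$. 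Boundedness of $U_1, U_2$ guarantees the $\sigma$-penalty controls behavior at infinity. At such a maximizer, I would invoke the subsolution inequality for $U_1$ with the test gradient $p_1 = \frac{x-y}{\eps} + \sigma\nabla\langle x\rangle$ and the supersolution inequality for $U_2$ with $p_2 = \frac{x-y}{\eps} - \sigma\nabla\langle y\rangle$, then subtract.

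Subtracting yields a bound of the form
\begin{equation*}
\eta(x_{\eps,\sigma})U_1(x_{\eps,\sigma}) - \eta(y_{\eps,\sigma})U_2(y_{\eps,\sigma}) \leq \frac{x-y}{\eps}\cdot\big(f(x_{\eps,\sigma}) - f(y_{\eps,\sigma})\big) + O(\sigma).
\end{equation*}
The right-hand side is controlled using the local Lipschitz continuity of $f$: on the relevant compact region, $|f(x)-f(y)| \leq L|x-y|$, so the term $\frac{|x-y|^2}{\eps}$ is bounded, and by the standard lemma one shows $\frac{|x_{\eps,\sigma}-y_{\eps,\sigma}|^2}{\eps} \to 0$ as $\eps \to 0$. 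The local Lipschitz continuity of $\eta$ lets me rewrite the left-hand side as $\eta(x)(U_1(x)-U_2(y)) + (\eta(x)-\eta(y))U_2(y)$ and absorb the second piece (again using boundedness of $U_2$ and $|\eta(x)-\eta(y)|\leq L_\eta|x-y| \to 0$). Since $\eta$ is positive definite, on the set where $U_1 - U_2$ is close to its positive supremum we have $\eta$ bounded below by a positive constant, which forces $\sup(U_1 - U_2) \leq 0$ after sending $\sigma \to 0$ then $\eps \to 0$.

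The main obstacle is the degeneracy of the monotonicity at the equilibrium: $\eta(\xeq) = 0$, so the ``proper'' coefficient vanishes precisely at $\A$, and the comparison principle cannot be applied naively there. I expect to handle this by treating the boundary-type condition $U_h(\xeq) = h(\xeq)$ separately — the penalization keeps maximizers away from $\xeq$ when the supremum of $U_1 - U_2$ is strictly positive, since both solutions agree at $\xeq$ and are continuous, or alternatively by exploiting the local exponential stability from (H2) to get a uniform lower bound on $\int_0^T \eta(\phi(s,\cdot))\,ds$ near $\A$. A secondary technical point is justifying the interior-maximizer existence on $\R^n\setminus\{\xeq\}$ and ensuring the Hamiltonian's modulus estimate holds uniformly on the shrinking neighborhoods; both are routine once the Lipschitz bounds and the coercive penalty are in place.
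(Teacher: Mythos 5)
Your strategy --- a single global comparison principle via doubling of variables with a coercive penalization --- is a genuinely different route from the paper's, but as written it contains a gap that is fatal precisely in the setting of this theorem. The penalization $\sigma\langle x\rangle$ inserts $\pm\sigma\nabla\langle \cdot\rangle$ into the test gradients, so after subtracting the sub- and supersolution inequalities the right-hand side carries the term $\sigma\left(\nabla\langle x_{\eps,\sigma}\rangle\cdot f(x_{\eps,\sigma})+\nabla\langle y_{\eps,\sigma}\rangle\cdot f(y_{\eps,\sigma})\right)$, which you absorbed into ``$O(\sigma)$''. That absorption is only valid when $f$ has at most linear growth. Here $f$ is merely locally Lipschitz, and the penalization confines the maximizers only to the region $\langle x\rangle\lesssim C/\sigma$; for superlinear $f$ (e.g.\ the paper's own running example $f(x)=-x+x^3$, with cubic growth) this term can be of order $\sigma^{-2}$ and sending $\sigma\to 0$ gives nothing. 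Similarly, your bound ``on the relevant compact region, $|f(x)-f(y)|\le L|x-y|$'' uses a Lipschitz constant $L_\sigma$ that blows up as the region grows with $1/\sigma$, so even with the order of limits $\eps\to 0$ first, the $\sigma|f|$ term remains uncontrolled. A secondary issue: the claim that $\eta$ is ``bounded below by a positive constant'' on the set where $U_1-U_2$ is near its supremum requires uniform positivity of $\eta$ away from $\xeq$ \emph{including at infinity}; bare positive definiteness, which is all the theorem assumes beyond local Lipschitz continuity, does not provide this on the unbounded domain, and the maximizers may drift to infinity. By contrast, your treatment of the degeneracy at $\xeq$ itself (continuity plus the shared point value $U_1(\xeq)=U_2(\xeq)$ keep maximizers away) is the sound part of the plan: the failure is at infinity, not at the equilibrium.

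The paper avoids the global comparison altogether by decomposing along the dynamics. Inside $\roa(\A)$ it log-transforms to $V=-\log(U_h/h(\xeq))$, takes the difference $w$ of two candidate solutions of $-\nabla V\cdot f-\eta=0$, shows $w$ is constant along trajectories (following \cite[Chapter II, Proposition 5.18]{bardi1997optimal}), and lets asymptotic stability propagate the single point condition: $w(x)=w(\phi(t,x))\to w(\xeq)=0$ as $t\to\infty$. This trajectory mechanism is exactly what your fallback remark about exploiting (H2) gestures at but does not develop --- and it is needed not merely near $\A$ but as the device that transmits the one-point datum across the entire ROA, since \eqref{E: dual_test} is linear and homogeneous in $U$ and otherwise admits the whole family $cU$. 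Outside, every bounded solution vanishes on $\partial\roa(\A)$, and the paper compares an arbitrary bounded solution against the \emph{explicit} zero solution via the simpler doubling $u(x)-|x-y|^2/(2\eps)$, whose test gradients carry no penalization contribution, dividing by $\eta(x_\eps)>0$ there. To salvage your single global argument you would need an extra structural hypothesis (globally Lipschitz or linearly growing $f$, or coercive $\eta$) that Theorem \ref{thm: unique} does not assume.
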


The proof for more general perturbed systems is provided in \cite[Theorem 3.8]{camilli2001generalization}. A succinct proof of Theorem \ref{thm: unique} is offered in Appendix \ref{sec: app_uniq}, based on assumptions specific to our setting.

\ymr{By the proof of Theorem \ref{thm: uniqueness_U},  and under the same assumptions as in the above theorem, it also implies the uniqueness of the viscosity solution $V$ for $-\nabla V(x)\cdot f(x)-\eta(x)=0$. The Lipschitz continuity of $V$ has also been proven in a recent work \cite{liu2023physics}.} 
\ym{\begin{rem}
    Note that this $V$ may be a Lyapunov function on $\roa(\A)$  given that $\partial^-V(x)\neq \emptyset$ for all $x\in\roa(\A)$. This follows from Definition \ref{def: vis1} and \cite{zhu2003lower},  since  we have $-p\cdot f(x)-\eta(x)\geq 0$ for all $p\in\partial^-V(x)$ and for all $x\in\roa(\A)$.  In other words, a differentiable $V$ is already a Lyapunov function, while a non-differentiable $V$ may not be. However, theoretically, there always exists a smooth version of $V$ that can be as a Lyapunov function \cite[Section 5]{camilli2001generalization}. The subsequent sections of the paper will provide a perspective on how to use data-driven Koopman-based methods to construct a Lyapunov function, serving as an alternative to the sum-of-squares approach  in \cite{jones2021converse}.\Qed
\end{rem}}

\subsection{Zubov's Dual Equation on a Compact Subdomain}

To facilitate data-driven techniques and prevent significant under-approximation of the ROA, we directly choose a sufficiently large compact region of interest $\rr\subseteq\R^n$. This region can either contain the entire ROA, assuming it is bounded,  or cover a significant portion of the ROA if it is unbounded. Our
proposed method uses observable data to recover the ROA relative to  $\rr$.

To incorporate Zubov's dual equation, we need to recast the dynamics in $\rr$. We first consider a first-hitting time of $\partial\rr$ defined as follows,
    \begin{equation}
    \tau:=\tau(x)=\inf\set{t\geq 0: \phi(t, x)\in\partial \rr}, \;\; x\in \rr. 
\end{equation}

We further define stopped-flow maps so that one can observe the trajectories in $\rr$. 
\begin{deff}\label{def: stopped_flow}
    Given the compact region of interest $\rr$, for each $x\in\rr$, we define the stopped-flow maps $\hphi: [0,\infty)\times \rr\ra \rr$ as
    \begin{equation}
        \hphi (t, x):=\phi(t\cj\tau, x). 
    \end{equation}
\end{deff}

Note that in the above definition, the stopping time $\tau$ implicitly encodes the information of the starting position. It can also be verified that 
\begin{enumerate}
    \item $\hphi(0, x)= x$ and $\hphi(s, \phi(t,x))=\hphi(t+s, x)$ for all $x\in\rr$,  and
    \item $\partial_t(\hphi(t, x)) = f(\hphi(t,x))$ for all $x\in\inte(\rr)$.
\end{enumerate}

We then consider a recast version of functions $V$ and $U_h$ (defined in \eqref{E: V} and \eqref{E: U}) accordingly. Let $\hV(x)=\int_0^\infty \eta(\hphi(t, x))dt$
for all $x\in \rr$.  For any $h\in C(\rr)$\footnote{Note that we have subtly changed the space of test functions from $C_b(\R^n)$ (in Eq.\eqref{E: U}) to $C(\rr)$.}, define
    \begin{equation}\label{E: Uhat}
    \hU_h(x)=
    \begin{cases}
        &\exp\set{-\hV(x)}h(\hphi_\infty(x)),\;\text{if}\;\hV(x)<\infty,\\
        & 0,\;\;\text{otherwise},
    \end{cases}
\end{equation}
where $\phi_\infty(x):=\lim_{t\ra\infty}\phi(t, x)$. For test function $h(x)=\mathds{1}(x)$, we denote $\hU:=\hU_\mathds{1}$ for simplicity. In this notion, it can be verified that $\hV(x)=\infty$ if and only if $\tau<\infty$ and $x\notin \roa(\A)$. 

The following theorem collects nice properties of $\hV$ and $\hU$ on the refined region $\rr$. 
\begin{thm}\label{thm: Zubov_dual_recast}
    For any $h\in C(\rr)$, 
    \begin{enumerate}
        \item $\hU_h$ is a viscosity
solution to 
        \begin{equation}\label{E: dual_test_hat}
    H(x, \hU_h(x), \nabla \hU_h(x))=0, \;\;\hU_h(\xeq)=h(\xeq)
\end{equation}
 on $\inte(\rr)$, where $H$ is defined in \eqref{E: H}.  Suppose that $\eta$ and $h$ are also locally Lipschitz continuous, then $\hU_h$ is the unique bounded viscosity solution. 
        \item $\hW_h=1-\hU_h$ is a viscosity solution to $Z(x, \hW_h(x), \nabla \hW_h(x))=0, \;\;\hW_h(\xeq)=1-h(\xeq)$, 
    on $\inte(\rr)$ if and only if $\hU_h$ is a viscosity solution to \eqref{E: dual_test_hat}, where $Z$ is defined in \eqref{E: Z_function}. 
        \item 
Let $F(x,  p):=-p\cdot f(x)-\eta(x).$    On  any invariant set $\I\subseteq\roa(\A)\cap \inte(\rr)$, the function $\hV(x)=-\log(\hU(x))$ is a viscosity solution to $F(x, \nabla \hV(x))=0$ with $\hV(\xeq)=0$. 
    \end{enumerate}
\end{thm}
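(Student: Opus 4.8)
The plan is to treat all three parts as the stopped-flow analogues of Lemma~\ref{prop: max_lyapunov}, Theorem~\ref{thm: uniqueness_U}, Theorem~\ref{thm: unique}, and the corollary following Theorem~\ref{thm: uniqueness_U}, replacing $\phi$ by $\hphi$ and the ambient space $\R^n$ by the open set $\inte(\rr)$. The key enabling observation is that, by the properties recorded after Definition~\ref{def: stopped_flow}, the stopped flow obeys $\hphi(0,x)=x$, the semigroup identity $\hphi(s,\phi(t,x))=\hphi(t+s,x)$, and $\partial_t\hphi(t,x)=f(\hphi(t,x))$ for $x\in\inte(\rr)$. Because the viscosity-solution property is purely local, and because for every $x\in\inte(\rr)$ and all sufficiently small $t>0$ the trajectory $\phi(s,x)$ stays in $\inte(\rr)$ for $s\in[0,t]$ (so that $\tau(x)>t$ and $\hphi(s,x)=\phi(s,x)$ there), every infinitesimal estimate from the earlier proofs transfers verbatim to interior test points.

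I would begin with part 3. On an invariant set $\I\subseteq\roa(\A)\cap\inte(\rr)$, invariance forces $\phi(t,x)\in\I\subseteq\inte(\rr)$ for all $t\ge 0$, hence $\tau(x)=\infty$ and $\hphi=\phi$ on $\I$; consequently $\hV=V$ and $\hU=U$ there, and since $\hV=-\log\hU$, part 3 is exactly Lemma~\ref{prop: max_lyapunov} applied on $\I$. For the existence claim in part 1, I would first establish the local dynamic programming principle $\hV(x)=\int_0^t\eta(\phi(s,x))\,ds+\hV(\phi(t,x))$ for small $t$ at interior points, obtained by splitting the defining integral of $\hV$ at time $t$ and applying the semigroup identity. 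Replaying Lemma~\ref{prop: max_lyapunov} then shows that $\hV$ is a viscosity solution of $F(x,\nabla\hV)=0$ on the subset of $\inte(\rr)$ where $\hV<\infty$; the exponential change of unknown together with \cite[Chapter 2, Proposition 2.5]{bardi1997optimal} transfers this to $\hU_h=\exp(-\hV)\,h(\hphi_\infty(\cdot))$ as a viscosity solution of \eqref{E: dual_test_hat}. On the complementary region, where $\hU_h\equiv 0$, I would reuse the mollification argument of Theorem~\ref{thm: uniqueness_U}: differentiability a.e.\ (Rademacher), convolution of the a.e.\ identity with $\rho_\eps$, and passage to the limit via \cite[Chapter II, proposition 2.2.]{bardi1997optimal}.

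The uniqueness half of part 1, under locally Lipschitz $\eta$ and $h$, I would derive from the comparison principle exactly as in Theorem~\ref{thm: unique} and \cite[Theorem 3.8]{camilli2001generalization}; here the argument is in fact simpler, since $\inte(\rr)$ is bounded and the natural Dirichlet datum $\hU_h=0$ on $\partial\rr$ (where $\tau=0$ and $\hV=\infty$ for $x\ne\xeq$), together with $\hU_h(\xeq)=h(\xeq)$, pins down the bounded solution. Part 2 is the stopped-flow analogue of the corollary following Theorem~\ref{thm: uniqueness_U}, and I would obtain it from the affine change of unknown $\hW_h=1-\hU_h$: using the differential-set identities $\partial^+\hW_h(x)=-\partial^-\hU_h(x)$ and $\partial^-\hW_h(x)=-\partial^+\hU_h(x)$, the change-of-variable rule for viscosity solutions converts the sub/supersolution inequalities for $\hU_h$ under $H$ into those for $\hW_h$ under $Z$, yielding the stated equivalence in both directions.

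I expect the principal obstacle to be the analysis across the interface inside $\inte(\rr)$ that separates the region where $\hV<\infty$ (so $\hU_h>0$) from the region where $\hU_h\equiv 0$, since $\hU_h$ need not be differentiable there; this is precisely what the Rademacher-plus-mollification step is designed to handle. A secondary but genuine point of care is ensuring that the stopping of $\hphi$ on $\partial\rr$ never contaminates interior test points: all estimates must be localized to interior neighborhoods where $\hphi$ coincides with $\phi$, which is legitimate because the claims are asserted only on $\inte(\rr)$ and on invariant subsets thereof.
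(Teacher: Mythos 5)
Your proposal is correct and follows essentially the same route as the paper, whose proof of Theorem~\ref{thm: Zubov_dual_recast} is exactly the reduction you describe: it transfers the Section~\ref{sec: regularity} arguments verbatim to interior points with $\tau=\infty$ (where the dynamic programming principle survives thanks to the flow-map property of $\hphi$, i.e.\ your observation that $\hphi$ coincides with $\phi$ locally in time near interior test points), and dispatches points with $\tau<\infty$ as trivial because $\hV(x)=\int_0^\tau \eta(\hphi(s,x))\,ds+\eta(\phi(\tau,x))\int_\tau^\infty ds$ diverges there. The additional details you supply --- the invariance argument for part~3, the mollification step at the interface, the comparison argument simplified by boundedness of $\rr$, and the decreasing change of unknown with the flipped semidifferentials for part~2 --- are precisely the machinery from Lemma~\ref{prop: max_lyapunov}, Theorems~\ref{thm: uniqueness_U} and \ref{thm: unique}, and the corollary that the paper invokes by reference.
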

\begin{proof}
    The proof follows a similar procedure as the proofs of the statements in Section \ref{sec: Zubov_dual}. Indeed, the proof should be the same for any $x\in\inte(\rr)$ such that $\tau=\infty$. Particularly, the dynamic programming as in the proof of Lemma \ref{prop: max_lyapunov} still holds given the flow map property of $\hphi$.  For any $x\in\inte(\rr)$ such that $\tau<\infty$, the solution is trivial considering that the quantity $\hV(x)=\int_0^\tau \eta(\hphi(s, x))ds + \eta(\phi(\tau, x))\int_\tau^\infty  ds$
    diverges. 
\end{proof}

\begin{rem}
    By Theorem \ref{thm: Zubov_dual_recast}, suppose that $\roa(\A)\cap\inte(\rr)\neq \roa(\A)$, one can only recover a portion of $\roa(\A)$ that is not absorbed by the boundary by solving \eqref{E: dual_test_hat}. This portion should be a sublevel set (relative to $\roa(\A)\cap\inte(\rr)$) of the $\hV$. 
    In view of \cite{acc2022, meng2022smooth, meng2023lyapunov}, this sublevel set is also a subset of the refined open and invariant subregion of ROA, from which trajectories will satisfy the reach-avoid-stay property. \Qed
\end{rem}

\section{Zubov-Koopman Operators and Semigroup Property}\label{sec: zubov-Koopman}

Addressing the problem of estimating the ROA, we have introduced Zubov's dual equation as well as its refined form on $\rr$. The solution involves an improper integral up to $\infty$, which requires nearly the full knowledge of the trajectory \cite{kang2021data}. To reduce the substantial amount of observation data, in this section, we derive an approximation approach using a time series. Specifically, this time series is governed by a convergent, time-homogeneous, and Feynman-Kac like semigroup,  which allows us to approximate the long-term behavior through a simple iterative process. 

We first work on $\R^n$ and then on the refined region $\rr$.
\subsection{Introducing Zubov-Koopman Operators}
 Consider 
\begin{equation}\label{E: integral}
    v_t(x):=\int_0^t\eta(\phi(r, x))dr
\end{equation}
and, for any $h\in C_b(\R^n)$ and $t>0$, we define $\T_t: C_b(\R^n)\ra C_b(\R^n)$ as
\begin{equation}\label{E: zubov_koopman}
    \T_th(x):=\exp\left\{-v_t(x)\right\}h(\phi(t, x)).
\end{equation}

The following proposition shows the basic properties of $\{\T_t\}_{t\geq 0}$. We complete the proof in Appendix \ref{sec: app2}. 
\begin{prop}\label{prop: semigroup_eigenfunction}
    $\{\T_t\}_{t\geq 0}$ is a $C_0$-semigroup. In addition, for each $t\geq 0$ and for any $h\in C_b(\R^n)$, $\T_tU_h(x)= U_h(x)$ for all $x\in\R^n$. 
\end{prop}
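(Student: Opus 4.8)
The plan is to verify the three defining properties of a $C_0$-semigroup (Definition~\ref{def: semigroup}) together with well-definedness and linearity, and then to deduce the invariance identity $\T_t U_h = U_h$ from the dynamic programming structure of $V$. First I would record that each $\T_t$ is a bounded linear operator on $C_b(\R^n)$: linearity is immediate from \eqref{E: zubov_koopman}, since $h\mapsto h(\phi(t,\cdot))$ is linear and the prefactor $\exp\{-v_t(x)\}$ is independent of $h$; boundedness (indeed contractivity, $\norm{\T_t}\le 1$) follows because $\eta\ge 0$ forces $v_t(x)\ge 0$, hence $0<\exp\{-v_t(x)\}\le 1$ and $\abs{\T_t h(x)}\le\norm{h}_\infty$. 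Continuity of $\T_t h$ is inherited from joint continuity of $\phi$, continuity of $\eta$, and continuity of $h$. The identity $\T_0=\id$ is immediate from $v_0(x)=0$ and $\phi(0,x)=x$.

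The semigroup law $\T_t\circ\T_s=\T_{t+s}$ reduces to an additive cocycle identity for $v_t$. Expanding $\T_t(\T_s h)(x)=\exp\{-v_t(x)\}\exp\{-v_s(\phi(t,x))\}\,h(\phi(s,\phi(t,x)))$ and using the flow property $\phi(s,\phi(t,x))=\phi(t+s,x)$, it remains to check $v_t(x)+v_s(\phi(t,x))=v_{t+s}(x)$. This is obtained from $\phi(r,\phi(t,x))=\phi(t+r,x)$ and the substitution $u=t+r$, which turns $\int_0^s\eta(\phi(r,\phi(t,x)))\,dr$ into $\int_t^{t+s}\eta(\phi(u,x))\,du$, so the two integrals concatenate into $\int_0^{t+s}\eta(\phi(r,x))\,dr$. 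Hence $\T_t(\T_s h)=\T_{t+s}h$.

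The delicate property is strong continuity, $\lim_{t\downarrow 0}\T_t h=h$. Pointwise this is clear, since $v_t(x)\to 0$ and $\phi(t,x)\to x$ as $t\downarrow 0$ for each fixed $x$, and $h$ is continuous. The obstacle is that sup-norm convergence over all of $\R^n$ cannot be expected: because $\eta$ (e.g. $\eta(x)=\alpha\abs{x-\xeq}$) is unbounded, $v_t(x)$ need not be uniformly small as $\abs{x}\to\infty$, so that already $\T_t\mathds{1}=\exp\{-v_t(\cdot)\}$ fails to converge to $\mathds{1}$ uniformly. I would therefore establish strong continuity in the topology of uniform convergence on compact sets: fixing a compact $K$, the tube $\phi([0,1]\times K)$ is compact, on it $\eta$ is bounded (giving $\sup_{x\in K}v_t(x)\to 0$) and $h$ is uniformly continuous, while $\phi(t,\cdot)\to\id$ uniformly on $K$; combining these through the splitting $\T_t h(x)-h(x)=h(\phi(t,x))(\exp\{-v_t(x)\}-1)+(h(\phi(t,x))-h(x))$ yields uniform convergence on $K$. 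This is the step I expect to require the most care, since it is exactly where the choice of topology on $C_b(\R^n)$ (sup-norm versus a strict or compact-open topology) genuinely matters.

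Finally, for the invariance identity $\T_t U_h=U_h$ I would argue by cases. If $x\notin\roa(\A)$, then by forward invariance of $\roa(\A)$ (and the fact that $\phi(t,x)\in\roa(\A)$ would force $x\in\roa(\A)$) also $\phi(t,x)\notin\roa(\A)$, so both $U_h(x)$ and $U_h(\phi(t,x))$ vanish and the identity is trivial. If $x\in\roa(\A)$, then $V(x)<\infty$ by Lemma~\ref{lem: V}, the dynamic programming splitting $V(x)=v_t(x)+V(\phi(t,x))$ holds, and $\phi_\infty(\phi(t,x))=\phi_\infty(x)$. Substituting $U_h(\phi(t,x))=\exp\{-V(\phi(t,x))\}h(\phi_\infty(x))=\exp\{-V(x)+v_t(x)\}h(\phi_\infty(x))$ into $\T_t U_h(x)=\exp\{-v_t(x)\}U_h(\phi(t,x))$, the factors $\exp\{\pm v_t(x)\}$ cancel and leave $\exp\{-V(x)\}h(\phi_\infty(x))=U_h(x)$, as required. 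This last part is essentially a bookkeeping consequence of the same cocycle identity used for the semigroup law, so no new difficulty arises here.
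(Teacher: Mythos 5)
Your proof is correct, and on three of the four ingredients it is essentially the paper's own argument (Appendix \ref{sec: app2}): $\T_0=\id$ is immediate, the semigroup law is exactly the paper's change-of-variables computation establishing the cocycle identity $v_t(x)+v_s(\phi(t,x))=v_{t+s}(x)$, and the fixed-point identity $\T_tU_h=U_h$ is the dynamic-programming split $V(x)=v_t(x)+V(\phi(t,x))$ together with $\phi_\infty(\phi(t,x))=\phi_\infty(x)$, which the paper invokes but leaves unwritten ``due to the similarity'' (your explicit case distinction $x\in\roa(\A)$ versus $x\notin\roa(\A)$, using invariance of the ROA, is the right way to fill that in). Where you genuinely depart is strong continuity, and your instinct there is sharper than the paper's: the paper asserts it ``can be verified straightforwardly by Definition \ref{def: semigroup},'' implicitly in the sup norm on $C_b(\R^n)$, but as you observe this fails for unbounded $\eta$ --- with the paper's own suggested choice $\eta(x)=\alpha\abs{x-\xeq}$ and, say, $\dot{\xb}=-\xb$, one gets $v_t(x)=\abs{x}(1-e^{-t})$ and hence $\norm{\T_t\mathds{1}-\mathds{1}}_\infty=1$ for every $t>0$. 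Your repair, proving $\T_th\ra h$ uniformly on compact sets via compactness of the tube $\phi([0,1]\times K)$, the bound $\abs{e^{-v_t}-1}\le v_t$, and the splitting $\T_th-h=h(\phi(t,\cdot))(e^{-v_t}-1)+(h(\phi(t,\cdot))-h)$, is sound and is the strongest statement available on all of $\R^n$; what the paper's terser claim buys is that it becomes literally true in the setting actually used downstream, namely the redefined operators on the compact region $\rr$ (Corollary \ref{cor: main}), where $\eta$ is bounded and the stopped flow stays in $\rr$, so sup-norm strong continuity does hold there. In short: same architecture as the paper, plus explicit contractivity $\norm{\T_t}\le 1$, plus a correct diagnosis that on $\R^n$ the $C_0$ property must be read in the compact-open (or an otherwise weakened) topology rather than the uniform norm.
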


The stochastic version of $\set{\T_t}_{t\geq 0}$ is the famous Feynman-Kac semigroup \cite[Chapter 8]{oksendal2013stochastic}. While this is generally not true for stochastic systems, for deterministic systems, we observe that $\T_t$ depicts a form of separation of variables and can be written as a multiplication of a contraction operator with the Koopman operator, i.e. $\T_t=\exp\set{-v_t}\K_t$. 

\ymr{The following theorem also shows a close connection with the Zubov's dual equation. Particularly,  Eq. \eqref{E: feynman-kac} is a time-varying version of the Zubov's dual equation, and captures a steady point in the function space when $t\ra\infty$.} For the purpose of using the flow of $h$ governed by $\T_t$ to approximate the solution of Zubov's dual equation, for any fixed $t$, we name $\T_t$ as the \emph{Zubov-Koopman Operator}. 

\begin{thm}\label{thm: feynman-kac}
    Let the test function be $h\in C_b^1(\R^n)$. Suppose that $\eta\in C(\R^n)$ is nonnegative. Then
    \begin{enumerate}
        \item[(1)] $u_h(t, x)=\T_th(x)$ solves the following Cauchy problem, for all $t>0$ and all $x\in\R^n$,
     \begin{equation}\label{E: feynman-kac}
     \begin{cases}
         &\partial_t u_h(t,x) = \nabla_x u_h(t,x)\cdot f(x) -\eta(x)u_h(t,x), \\
         & u_h(0, x) = h(x).  
     \end{cases}
     \end{equation}
     \item[(2)] Conversely, for any $u_h\in C^{1,1}([0, \infty), \R^n)$ that satisfies \eqref{E: feynman-kac}, the solution should be of the form $u_h(t,x)=\T_th(x)$. 
    \end{enumerate}
\end{thm}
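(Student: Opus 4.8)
The plan is to treat the two directions separately: part (1) is a direct verification that the explicit formula $\T_t h(x)=\exp\{-v_t(x)\}\,h(\phi(t,x))$ from \eqref{E: zubov_koopman} satisfies the Cauchy problem, while part (2) is a uniqueness/representation statement that I would settle by the method of characteristics. The semigroup property already recorded in Proposition \ref{prop: semigroup_eigenfunction} is the lever for part (1), and the assumed $C^{1,1}$ regularity of $u_h$ is what makes part (2) self-contained.

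For part (1), the initial condition is immediate, since $v_0\equiv 0$ and $\phi(0,x)=x$ give $\T_0 h(x)=h(x)$. For the equation itself, I would differentiate $u_h(t,x)=e^{-v_t(x)}h(\phi(t,x))$ in $t$ using two facts built into the setup: by the fundamental theorem of calculus applied to \eqref{E: integral}, $\partial_t v_t(x)=\eta(\phi(t,x))$, and by the defining ODE of the flow, $\partial_t\phi(t,x)=f(\phi(t,x))$. Since $h\in C_b^1$, the chain rule yields $\partial_t u_h(t,x)=-\eta(\phi(t,x))u_h(t,x)+e^{-v_t(x)}\nabla h(\phi(t,x))\cdot f(\phi(t,x))$. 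To recognize the right-hand side as $\nabla_x u_h\cdot f(x)-\eta(x)u_h$ I would avoid differentiating $\phi$ in $x$ directly and instead exploit the semigroup structure: writing $\T_{t+s}=\T_s\circ\T_t$ gives $u_h(t+s,x)=e^{-v_s(x)}u_h(t,\phi(s,x))$, and differentiating at $s=0$ reproduces exactly $\nabla_x u_h(t,x)\cdot f(x)-\eta(x)u_h(t,x)$. Equivalently, $\T_t=\exp\{-v_t\}\K_t$ has generator $g\mapsto\nabla g\cdot f-\eta g$ (that is, $\L_f$ minus multiplication by $\eta$) on $C_b^1$, and the standard $C_0$-semigroup identity $\frac{d}{dt}\T_t h=\L\T_t h$ then delivers the PDE.

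The main obstacle in part (1) is the spatial regularity hidden in the step just above: the claim "for all $x$" requires $\nabla_x u_h(t,\cdot)$ to exist, which amounts to $\phi(t,\cdot)$ and $v_t(\cdot)$ being differentiable in $x$. Under the standing hypothesis that $f$ is merely locally Lipschitz, the flow is only locally Lipschitz in $x$ and need not be $C^1$, so $\T_t h$ need not lie in $C^1(\R^n)$. I would address this either by keeping to the semigroup-generator formulation (which expresses $\partial_t\T_t h$ without ever forming $\nabla_x u_h$ pointwise), or by noting that the classical identity holds wherever $u_h(t,\cdot)$ is differentiable, which is consistent with the viscosity framework used elsewhere in the paper; under the stronger assumption $f\in C^1$ the computation becomes classical at every point.

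For part (2) I would argue by characteristics, which is clean because $u_h\in C^{1,1}$ is genuinely differentiable in both variables. Fix $(T,x)$ and set $w(s):=e^{-v_s(x)}\,u_h(T-s,\phi(s,x))$ for $s\in[0,T]$. Differentiating and substituting \eqref{E: feynman-kac} at the point $(T-s,\phi(s,x))$, together with $\partial_s v_s(x)=\eta(\phi(s,x))$ and $\partial_s\phi(s,x)=f(\phi(s,x))$, the two $\nabla_x u_h\cdot f$ contributions cancel and the two $\eta u_h$ contributions cancel, giving $w'(s)\equiv 0$. Hence $w$ is constant, and evaluating at the endpoints gives $u_h(T,x)=w(0)=w(T)=e^{-v_T(x)}h(\phi(T,x))=\T_T h(x)$, which is the asserted form. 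I expect the bookkeeping of this cancellation, namely keeping straight which argument each term is evaluated at, to be the only delicate point, and it becomes routine once the curve $s\mapsto(T-s,\phi(s,x))$ is chosen so that the transport part of the operator is annihilated.
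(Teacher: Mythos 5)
Your proposal is correct and essentially reproduces the paper's proof: in part (1) your differentiation of the semigroup identity $u_h(t+s,x)=e^{-v_s(x)}u_h(t,\phi(s,x))$ at $s=0$ is the same computation the paper performs via the difference quotient $\frac{\K_s u_h(t,x)-u_h(t,x)}{s}$ together with the cocycle property of the flow, and in part (2) your constant function $w(s)=e^{-v_s(x)}u_h(T-s,\phi(s,x))$ is exactly the paper's auxiliary quantity $\Psi(s-t,\phi(t,x),v_t(x))=e^{-v_t(x)}u_h(s-t,\phi(t,x))$, shown constant by the same cancellation along characteristics. Your caveat about the pointwise existence of $\nabla_x u_h$ when $f$ is merely locally Lipschitz is a fair observation that the paper does not confront inside this proof (it is deferred to the viscosity-solution setting of Theorem \ref{thm: feynman-kac_extension}), but it does not alter the structure of the argument.
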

\begin{proof}
    Note that the (infinitesimal) generator of the Koopman semigroup $\{\K_t\}$ acting on $u_h$ is such that
    $$\L_f u_h(t,x) = \nabla_x u_h(t,x)\cdot f(x). $$
    To prove (1), we first look at how $u_h$ evolves according to $\K_s$ for some small $s>0$. By the definition of $\K_s$, for any fixed $t>0$, we have
    \begin{small}
     \begin{equation}\label{E: evo_K}
        \begin{split}
            &\frac{\K_su_h(t,x)-u_h(t,x)}{s} \\
             =& \frac{1}{s}\left[\exp\set{-\int_0^t\eta(\phi(r, \phi(s, x)))dr}h(\phi(t, \phi(s, x)))- u_h(t,x)\right]\\
             = &\frac{1}{s}\left[\exp\set{-\int_0^t\eta(\phi(r+s, x))dr}h(\phi (t+s, x)) - u_h(t,x)\right]\\
             = &\frac{1}{s}\left[\exp\set{-\int_s^{t+s}\eta(\phi(\sigma, x))d\sigma}h(\phi (t+s, x)) - u_h(t,x)\right]\\
        \end{split}
    \end{equation}       
    \end{small}

\noindent Note that
\begin{small}
  \begin{equation}\label{E: exp_expansion}
\begin{split}
        &\exp\set{-\int_s^{t+s}\eta(\phi(\sigma, x))d\sigma} \\
        = &\exp\set{-\int_0^{t+s}\eta(\phi(\sigma, x))d\sigma}\exp\set{\int_0^{s}\eta(\phi(\sigma, x))d\sigma}.
\end{split}
\end{equation}  
\end{small}

\noindent Combining \eqref{E: evo_K} and \eqref{E: exp_expansion}, we have
\begin{small}
 \begin{equation}
    \begin{split}
      &\frac{\K_su_h(t,x)-u_h(t,x)}{s}  \\
      =& u_h(t+s, x)\cdot\frac{1}{s}\left[\exp\set{\int_0^{s}\eta(\phi (r, x))dr}-1\right]  \\
      & +\frac{1}{s}\left[u_h(t+s, x)-u_h(t,x)\right]\\
    \end{split}
\end{equation}   
\end{small}

\noindent Sending $s\downarrow 0$ on both sides, it follows that
    \begin{equation*}
\begin{split}
        \nabla_x u_h(t,x)\cdot f(x) & = u_h(t,x)\eta(\phi_0(x))+\partial_t u_h(t,x)\\
        & = \partial_t u_h(t,x)+\eta(x)u_h(t,x),
\end{split}
\end{equation*}
which completes the first part of the proof. 

To prove (2), we suppose that $u_h\in C^{1,1}([0,\infty), \R^n)$ solves \eqref{E: feynman-kac}, then 
$$\partial_t u_h- \nabla_x u_h\cdot f+\eta u_h=0, \;\;\forall t>0, \;x\in\R^n$$
and $u_h(0, x)=h(x)$ for all $x\in\R^n$. Introduce an auxiliary function $\Psi(t, x, v)=\exp\set{-v}u_h(t,x)$. Then it is clear that $u_h(s, x)= \Psi(s, x, 0)$.  However, 
\begin{small}
    \begin{equation*}
    \begin{split}
        & d\psi(s-t, \phi(t, x), v_t(x)) \\
         = & -\partial_t\psi(s-t, \phi (t, x), v_t(x))+ \L_f \Psi(s-t, \phi(t, x), v_t(x)) \\
         &+ \eta(\phi (t, x))\partial_v \Psi(s-t, \phi (t, x), v_t(x))\\
          =&\exp\set{-v_t(x)}[-\partial_tu_h(s-t, \phi (t, x))+\L_f u_h(s-t, \phi (t, x)) \\
          &+ \eta(\phi (t, x))u_h(s-t, \phi (t, x))] = 0, 
    \end{split}
\end{equation*}
\end{small}

\noindent where the quantity $v_t(x)$
is defined as in \eqref{E: integral}.  
Therefore, the quantity $\Psi(s-t, \phi (t, x), v_t(x))$ is a constant for all $t$ and $x$, which implies that
\begin{small}
   \begin{equation*}
    \begin{split}
        u_h(s, x)&=\Psi(s, x, 0)\\
        & =\Psi(s, \phi (0, x), v_0(x))=\Psi(0, \phi (s, x), v_s(x))\\
        & = \exp\{-v_s(x)\}u_h(0, \phi (s, x)) = \exp\{-v_s(x)\}h(\phi (s, x)).
    \end{split}
\end{equation*} 
\end{small}

\noindent The proof is completed. 
\end{proof}

\subsection{A Time-Series Approximation}\label{sec: time-series}
\ymr{Clearly, by Theorem \ref{thm: feynman-kac} and by the definition of $U_h$ in \eqref{E: U}, for any $h\in C_b^1(\R^n)$, we have that $\lim_{t\ra\infty}u_h(t , x)=\lim_{t\ra\infty}\T_th(x)=U_h(x)$ for all $x\in\R^n$.}  In particular, $\lim_{t\ra\infty}\T_t\mathds{1}=U$ uniformly, and $U$ is also the unique, up to multiplicative constants,  fixed point of $\{\T_t\}_{t\geq 0}$. To approximate $U$, one  can pick a fixed time interval $\Delta t$, and define 
\begin{small}
    \begin{equation}\label{E: T_delta}
    \T_\Delta:=\T_{\Delta t}
\end{equation}
\end{small}

\noindent as well as 
    \begin{equation} \label{E: k_iteration}
    \T_{k\Delta}:=\underbrace{\T_\Delta\circ\T_\Delta\cdots \circ\T_\Delta}_{k\; \text{iterations}}. 
\end{equation}
 Then, by \eqref{E: U} and the uniqueness (up to multiplicative constants) of the fixed point, the composed operator $ \T_{k\Delta}: C_b(\R^n)\ra C_b(\R^n)$ for any $k\geq 1$,  and  $\lim_{k\ra\infty}\T_{k\Delta}h =U$ for any $h$ such that $h(\xeq)=1$. Suppose that one can approximate $\T_\Delta$ properly, then $\T_{k\Delta}h$ for some large $k$ should be a reasonably good approximate for $U$. 

Similar to the approximation of Koopman operators, to obtain a discrete version $\Tb$ of the bounded linear  operator $\T_\Delta$,  it usually relies on the choice of a (discrete) dictionary of observable test functions, denoted by 
\begin{small}
    \begin{equation}\label{E: dict}
    \Zk_N(x):=\left[\zk_0(x),\zk_1(x),\cdots,\zk_{N-1}(x)\right], \;\;N\in\N\cup\{\infty\}. 
\end{equation}
\end{small}

\noindent Then, the approximation $\tT: \operatorname{span}\{\zk_i\}_{i=0}^{N-1}\ra \operatorname{span}\{\zk_i\}_{i=0}^{N-1}$ is valid in the sense that, for each $h\in C_b(\R^n)$,  there exists an $\hk\in \operatorname{span}\{\zk_i\}_{i=0}^{N-1}$ and a uniformly continuous residual term  $\tta\in C_b(\R^n)$ such that
\begin{small}
    \begin{equation}\label{E: approx_T}
\T_\Delta h = \tT \hk + \tta.
\end{equation}
\end{small}

Possible choices of the dictionary $\Zk_N$ have been discussed in \cite{williams2015data, deka2022koopman},  including polynomials, Fourier basis, spectral elements, and neural network-based functions.  These choices are generally locally Lipschitz continuous, but there may be cases where differentiability is not exhibited, as seen in neural network-based functions with ReLU as the activation function. 

In line with Theorem \ref{thm: feynman-kac} and viscosity regularity of Zubov's dual equation, to make the time series approximation of $U$ robust, we extend Theorem \ref{thm: feynman-kac} for test functions $h\in C_b(\R^n)\cap\loclip(\R^n)$ and verify the regularity. 

\begin{thm}\label{thm: feynman-kac_extension}
    Let the test function be $h\in C_b(\R^n)\cap\loclip(\R^n)$. Suppose that $\eta\in C(\R^n)$ is nonnegative and locally Lipschitz. 
    Then, for each $t>0$, $u_h:=\T_th$ is the unique viscosity solution to \eqref{E: feynman-kac}. 
    
    Furthermore, for each fixed $t>0$, suppose that there exists a family of functions $\set{\uk^t_k\in C_b(\R^n)\cap\loclip(\R^n)}_{k=0}^\infty$ and, accordingly, a family of uniformly bounded continuous residuals $\set{\tta^t_k}_{k=0}^\infty$ with Lipschtiz constant $L_k$, such that 
    \begin{equation*}
        u_h(t,x):=\T_th(x)=\uk^t_k(x)+\tta^t_k(x), \;\;x\in\R^n
    \end{equation*}
and $\tta^t_k(\xeq)=0$. 
Assume $\sup_{x\in\Omega}|\tta_k^t(x)|\ra 0$   on each bounded subdomain $\Omega\subseteq\R^n$. Then, as $t,  k\ra \infty$, $\uk_k^t$ converges uniformly to $U_h$. 

In addition, suppose we also have $L_k\ra 0$, then the function $\vk_k^t=-\log(\uk_k^t)$ is the unique viscosity solution to $-\nabla \vk_k^t(x)\cdot f(x)-\eta(x)+\mathcal{O}_k(x)=0$ with $\vk_k^t(\xeq)=0$, where $\sup_{x\in\Omega}|\mathcal{O}_k(x)|\ra 0$  (as $k\ra\infty$) on  each bounded subdomain $\Omega\subseteq\R^n$.  

\begin{proof}
Since the proof of viscosity property is similar to the proof for $U_h$, we omit the first part of the proof. 

    Now, as $t\ra \infty$, by the uniform convergence of $\T_th$ for any $h\in C_b(\R^n)$, one has $|\partial_tu_h(t,x)|$ uniformly converges to $0$ on each subdomain $\Omega\subseteq\R^n$. By the construction of $\uk_k^t$, as $k\ra \infty$, $\uk_k^t$ also uniformly converges to $u_h(t, \cdot)$. Therefore, for each $h$, $\uk_k^t\ra U_h$ uniformly, which completes the second part of the proof. 

    By the construction of $\uk_k^t$, and by the first part of Theorem \ref{thm: feynman-kac_extension}, we can immediately have that, for each $t$ and for each $k$, $\uk_k^t$ is the unique viscosity solution to
    \begin{small}
        \begin{equation}\label{E: u_satisfaction}
\begin{split}
        &\nabla  \uk_k^t(x)\cdot f(x)  \\
        = &\partial_t u_h(t,x)+\eta(x)\uk_k^t(x) + \eta(x)\tta_k^t(x)- \nabla  \tta_k^t(x)\cdot f(x). 
\end{split}
\end{equation}
    \end{small}

\noindent Since $\tta_k^t$ is necessarily locally Lipschitz continuous, $\nabla \tta_k^t(x)$ exists a.e.. At a differentiable point, we have
        \begin{equation*}
    \begin{split}
        |\nabla \tta_k^t(x)|=\left|\frac{\tta_k(x+h)-\tta_k(x)}{h}\right|\leq L_k \ra 0.
    \end{split}
\end{equation*}
 The above implies that  $|\nabla \tta_k^t|$ converges uniformly to $0$ a.e.. For sufficiently large $t$ and $k$, by a similar argument as  the first part\footnote{One can take a convolution on both side of \eqref{E: u_satisfaction} with a mollifier $\rho_\eps$ and let $\widetilde{\mathcal{O}}_k=\left(\eta\tta_k^t- \nabla  \tta_k^t\cdot f\right)*\rho_\eps$. Then $\widetilde{\mathcal{O}}_k$ satisfies the requirement. }, one can verify that there exists an $\widetilde{\mathcal{O}}_k$ such that $\sup_{x\in\Omega}|\widetilde{\mathcal{O}}_k(x)| \ra 0$ 
and $\uk_k^t$ is the viscosity solution to 
\begin{equation}\label{E: lip_conv}
    -\nabla  \uk_k^t(x)\cdot f(x)  
        = -\eta(x)\uk_k^t(x) +\widetilde{\mathcal{O}}_k(x). 
\end{equation}
By a similar argument as in Proposition \ref{prop: max_lyapunov}, given the smoothness of $-\log(\uk_k^t)$,  there exists an $\mathcal{O}_k$ with $\sup_{x\in\Omega}|\mathcal{O}_k(x)| \ra 0$  such that  $\vk_k^t$ is  the unique viscosity solution to $-\nabla \vk_k^t(x)\cdot f(x)-\eta(x)+\mathcal{O}_k(x)=0$ with $\vk_k^t(\xeq)=0$. 
\end{proof}

\begin{rem}\label{rem: lya-converge}
    The first two parts of the above theorem state that, even though we may only use a family of uniformly convergent locally Lipschitz functions to approximate, the limit still satisfies \eqref{E: feynman-kac} \ymr{in} a viscosity sense. In addition, we can find a proper approximation $\uk_k^t$ for $U_h$. \ym{For the purpose of approximating $\{x\in\R^n: U(x)\geq 0\}$ w.r.t. the Hausdorff metric\footnote{\ym{For any approximator $H$ of $U$ such that $\|U-H\|\leq \eps$, one can show that the set $\{x: H(x)\geq \eps\}$ is a tight inner approximation of the ROA.  By definition, the Hausdorff distance between the ROA  and $\{x: H(x)\geq \eps\}$ is bounded by a small value determined by $\eps$, reflecting the proximity of the two sets in terms of their level set definitions.  
    However, there is a possibility that the approximated boundary `crosses' the $\partial\D(\A)$. To determine whether it provides a proper inner approximation of the ROA, one must undergo formal verifications by checking the Lie derivative properties, as stated in Section VI.C and detailed in the reference \cite{liu2023physics,liu2023towards}.}}, this approximation is satisfactory.}

    On the other hand, without the assumption that $L_k\ra 0$, the approximation $\uk_k^t$ may not solve \eqref{E: u_satisfaction} with vanishing $\nabla\tta_k^t\cdot f$ in a proper sense. This may eventually cause the third part of the statement to fail to hold, meaning that the approximation $\vk_k^t$ can not be readily used as a Lyapunov function. We will also demonstrate this effect in Section \ref{sec: num} via examples.\Qed
\end{rem}
\end{thm}

At the end of this section, we make a quick extension of the aforementioned results, further refining our observations on the compact region of interest $\rr$. Due to the similarity with previous results, we omit the proof for the following Corollary.

\begin{cor}\label{cor: main}
    Recall the stopped-flow map $\hphi$ defined in Definition \ref{def: stopped_flow}. Let 
    $\hv_t(x):=\int_0^t\eta(\hphi(r, x))dr$. For any $h\in C(\rr)$ and $t>0$, we redefine $\T_t: C(\rr)\ra C(\rr)$ as
        \begin{equation}\label{E: zubov_koopman_redefined}
    \T_th(x):=\exp\left\{-\hv_t(x)\right\}h(\hphi(t, x)).
\end{equation}
Then, 
\begin{enumerate}
\item $\set{\T_t}_{t\geq 0}$ is a $C_0$-semigroup. 
\item For each each $h\in C(\rr)$ and for each $t$, $\hU_h$ is an eigenfunction such that $\T_t\hU_h=\hU_h$.  
    \item For any test function $h\in \lip(\rr)$, given that $\eta\in\lip(\rr)$ is nonnegative, then $\hu_h(t, x):=\T_th(x)$ is the unique viscosity solution to \eqref{E: feynman-kac} for all $t>0$ and $x\in\inte(\rr)$. 
    \item Suppose there exists a family of functions $\set{\uk^t_k\in \lip(\rr)}_{k=0}^\infty$ that uniformly converges to $\hu_h(t, \cdot)$ for any $t$, then, as $t,k\ra\infty$, $\uk_k^t$ converges uniformly to $\hU_h$, where $\hU_h$ is defined in \eqref{E: Uhat}. 
    \item Suppose we also have $L_k\ra 0$, which is the Lipschitz constant for $\hu_h(t,\cdot)-\uk_k^t$ for each $t$. Then, for sufficiently large $t$ and $k$, the function $\vk_k^t=-\log(u_k^t)$ is the unique viscosity solution to $-\nabla \vk_k^t(x)\cdot f(x)-\eta(x)+\mathcal{O}(x)=0$ with $\vk_k^t(\xeq)=0$  on any invariant set $\I\subseteq \roa(\A)\cap\inte(\rr)$ , where $\sup_{x\in\I}|\mathcal{O}(x)|$ is arbitrarily small.  
\end{enumerate}
\end{cor}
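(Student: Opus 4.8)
The plan is to transport each of the five statements from the $\R^n$ setting (Proposition \ref{prop: semigroup_eigenfunction} and Theorems \ref{thm: feynman-kac}--\ref{thm: feynman-kac_extension}) to the region $\rr$ by systematically replacing the flow $\phi$ with the stopped-flow $\hphi$ and the integral $v_t$ with $\hv_t$, while confining every differential argument to $\inte(\rr)$, where $\hphi$ still solves the ODE. The facts listed after Definition \ref{def: stopped_flow} --- namely $\hphi(0,x)=x$, the cocycle identity $\hphi(s,\phi(t,x))=\hphi(t+s,x)$, and $\partial_t\hphi(t,x)=f(\hphi(t,x))$ on $\inte(\rr)$ --- are exactly the ingredients the earlier proofs used for $\phi$, so the structure of each argument is reusable.

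For (1) and (2) I would first establish the additivity identity $\hv_t(x)+\hv_s(\hphi(t,x))=\hv_{t+s}(x)$ by splitting the integral and applying the cocycle property to the integrand, precisely as in \eqref{E: exp_expansion}. Combined with $\hphi(s,\hphi(t,x))=\hphi(t+s,x)$ this yields $\T_t\circ\T_s=\T_{t+s}$ and $\T_0=\id$, and strong continuity follows from continuity of $t\mapsto\hphi(t,x)$ together with $\hv_0=0$, giving the $C_0$-semigroup property. For the eigenfunction claim $\T_t\hU_h=\hU_h$ I would split on the value of $\hV$: when $\hV(x)<\infty$ the stopped flow agrees with $\phi$, the dynamic-programming relation $\hV(x)=\hv_t(x)+\hV(\hphi(t,x))$ holds, and $\hphi_\infty$ is constant along the orbit, so the exponential factors telescope to recover $\hU_h(x)$; when $\hV(x)=\infty$ both sides vanish, since divergence of $\hv$ forces $\hU_h(\hphi(t,x))=0$.

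For (3) and (4) I would reproduce the generator computation of Theorem \ref{thm: feynman-kac} on $\inte(\rr)$, where $\partial_t\hphi(t,x)=f(\hphi(t,x))$ guarantees that $\L_f$ acts as $\nabla_x(\cdot)\cdot f$; the viscosity and mollification arguments of Theorem \ref{thm: feynman-kac_extension} then carry over, with uniqueness supplied by the comparison principle under the local Lipschitz hypotheses on $\eta$ and $h$ (as in Theorem \ref{thm: unique}). Statement (4) follows by the triangle inequality once I record the analogue of $\lim_{t\to\infty}\T_t h=\hU_h$ (uniform convergence), splitting $\norm{\uk_k^t-\hU_h}\le\norm{\uk_k^t-\hu_h(t,\cdot)}+\norm{\hu_h(t,\cdot)-\hU_h}$ and letting $t,k\to\infty$. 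For (5), the key reduction is that on an invariant set $\I\subseteq\roa(\A)\cap\inte(\rr)$ one has $\tau=\infty$ for every $x\in\I$, so $\hphi=\phi$ and $\hV=V$ there; the hypothesis $L_k\to 0$ then makes $\nabla\tta_k^t\cdot f$ vanish uniformly, and applying $-\log(\cdot)$ to $\uk_k^t$ reproduces the perturbed equation of the third part of Theorem \ref{thm: feynman-kac_extension} with an arbitrarily small residual $\mathcal{O}$.

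The main obstacle is precisely that $\hphi$ is not a genuine flow on all of $\rr$: it satisfies the ODE only on $\inte(\rr)$ and freezes upon reaching $\partial\rr$. This is why every differential or viscosity statement must be restricted to $\inte(\rr)$ (or to the invariant subset $\I$ in (5)), and why the stopping time $\tau$ must be tracked with care --- the divergence of $\hv_t$ for $t>\tau$, driven by $\eta(\phi(\tau,x))>0$ at boundary points away from $\xeq$, is exactly what collapses $\hU_h$ to zero on escaping trajectories and makes the case analysis in (1)--(2) close.
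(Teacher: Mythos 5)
Your proposal is correct and takes exactly the route the paper intends: the paper omits this proof as ``similar to previous results,'' and your systematic substitution of $\hphi$ for $\phi$ and $\hv_t$ for $v_t$ --- using the cocycle/additivity identities for parts (1)--(2), the divergence of $\hv_t$ beyond the stopping time to collapse $\hU_h$ on escaping trajectories (as in the proof of Theorem \ref{thm: Zubov_dual_recast}), the restriction of all differential and viscosity arguments to $\inte(\rr)$ for parts (3)--(4), and the reduction $\hphi=\phi$ on an invariant $\I\subseteq\roa(\A)\cap\inte(\rr)$ for part (5) --- is precisely the adaptation of Proposition \ref{prop: semigroup_eigenfunction} and Theorems \ref{thm: feynman-kac}--\ref{thm: feynman-kac_extension} that the omission presupposes. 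No gaps beyond those already present in the cited proofs.
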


\begin{rem}
    We have chosen not to introduce the `hat' notation for the redefined $\T_t$ in \eqref{E: zubov_koopman_redefined}. \ymr{The difference between the two versions (\eqref{E: zubov_koopman} and \eqref{E: zubov_koopman_redefined}) lies in their domains. We encourage readers to verify the domain of $\T_t$ in the context before using it.}  \Qed
\end{rem}

\section{Finite-
Dimensional Approximation of Zubov-Koopman operators.}\label{sec: finite-approx}

As we have seen in \eqref{E: approx_T}, we expect to find an approximation 
for the Zubov-Koopman operators such that the image functions converge uniformly.  In practice, we would like to see if the training discrete dictionary $\Zk_N$ (as in \eqref{E: dict}) of observable test functions can be reduced to finite, such that the approximation 
behaves like a finite-rank operator, and preserves $\operatorname{span}\set{\zk_i}_{i=0}^{N-1}$ for some $N<\infty$. 

 In this section, we rigorously investigate basic properties and a finite-dimensional approximation of Zubov-Koopman operators. 
 We will work on the compact region of interest $\rr\subseteq\R^n$ for the rest of this paper. 
Since our purpose is to learn  Zubov-Koopman operators based on training data, we propose a three-step intermediate approximation for $\{\T_t\}$, such that for any $t>0$, an approximation of the form \eqref{E: approx_T} holds. 

\subsection{Compact Approximation of Zubov-Koopman}\label{sec: compact_operator}

 $\{\T_t\}_{t\geq 0}$ defined in \eqref{E: zubov_koopman_redefined} is clearly a family of bounded linear operators. One may attempt to show that $\T_t$ is also compact for each $t$. It suffices to show that $\T_t(\B_r) \subseteq C(\rr)$ is relatively compact, where $\B_r = \{h\in C(\rr): \|h\|_\infty\leq r\}$ for some $r>0$. However, equicontinuity within $\T_t(\B_r)$ is not guaranteed. To see this, we set $h_n(x) = \sin(nx) \in \B_1$ (or similarly, the Fourier basis), and let $\hphi(t, x)=x\cdot e^{-t}$ for all $x\in\inte(\rr)$ and $\hphi(t, x)=x$ elsewhere. 
Then, the sequence $\{h_n\circ \hphi(t,\cdot)\}_n$ for each $t$ does not possess equicontinuity due to the rapid oscillation as $n$ increases. 

Nonetheless, the following proposition states that one can use compact operators to strongly approximate $\T_t$ for each $t$. 

\begin{prop}\label{prop: operator_approx}
For each $t>0$, there exists a family of compact linear operator $\{\T_t^\eps\}_{\eps>0}$, such that for all $h\in C(\rr)$, we have
$\|\T_t^\eps h - \T_t h\|_\infty\ra 0$ as $\eps\ra 0$. 
\end{prop}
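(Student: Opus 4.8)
The plan is to exploit the factorization $\T_t = \exp\{-\hv_t\}\,\K_t$ (the recast analogue on $\rr$ of $\T_t=\exp\{-v_t\}\K_t$), together with the boundedness $\norm{\T_t}\le 1$, and then to recover compactness --- which $\T_t$ itself lacks because the Koopman factor $\K_t$ fails to send bounded sets to equicontinuous ones, as the $\sin(nx)$ example shows --- by smoothing the image. The norm bound holds since $\eta\ge 0$ forces $\hv_t\ge 0$, hence $0<\exp\{-\hv_t\}\le 1$, giving $\norm{\T_t h}_\infty\le\norm{h}_\infty$. First I would fix a bounded linear extension operator $E:C(\rr)\to C_b(\R^n)$, whose existence is granted by the Dugundji extension theorem and which may be chosen with $\norm{Eg}_\infty\le\norm{g}_\infty$ (its values lie in the convex hull of $g(\rr)$). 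This is needed only to make a convolution well defined near $\partial\rr$, where the support of the mollifier protrudes outside $\rr$.

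Next, using a standard smooth mollifier $\rho_\eps$ (as in the proof of Theorem~\ref{thm: uniqueness_U}), I would define, for each $\eps>0$,
\[
\T_t^\eps h(x) := \big[(E\,\T_t h)*\rho_\eps\big](x) = \int_{\R^n}(E\,\T_t h)(y)\,\rho_\eps(x-y)\,dy,\qquad x\in\rr .
\]
Linearity of $\T_t^\eps$ is immediate from linearity of $E$, of $\T_t$, and of convolution. Boundedness follows from $\norm{\T_t^\eps h}_\infty\le \norm{\rho_\eps}_{L^1}\,\norm{E\,\T_t h}_\infty\le\norm{\T_t h}_\infty\le\norm{h}_\infty$, since $\norm{\rho_\eps}_{L^1}=1$.

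For compactness I would apply Arzel\`a--Ascoli to the image of the ball $\B_r=\{h\in C(\rr):\norm{h}_\infty\le r\}$. Uniform boundedness is the estimate just stated. For equicontinuity, smoothness of $\rho_\eps$ gives, for $x_1,x_2\in\rr$,
\[
\abs{\T_t^\eps h(x_1)-\T_t^\eps h(x_2)} \le \norm{E\,\T_t h}_\infty \int_{\R^n}\abs{\rho_\eps(x_1-y)-\rho_\eps(x_2-y)}\,dy \le r\,\norm{\nabla\rho_\eps}_{L^1}\,\abs{x_1-x_2},
\]
a modulus of continuity independent of $h\in\B_r$. Hence $\T_t^\eps(\B_r)$ is uniformly bounded and equicontinuous, thus relatively compact in $C(\rr)$, so each $\T_t^\eps$ is compact. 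Finally, for strong convergence I would use that $\T_t h\in C(\rr)$ is uniformly continuous on the compact set $\rr$ (equivalently, $E\,\T_t h$ is uniformly continuous on a neighborhood of $\rr$); standard mollifier theory then yields $(E\,\T_t h)*\rho_\eps\to E\,\T_t h$ uniformly on $\rr$, and since $E\,\T_t h=\T_t h$ on $\rr$ this gives $\norm{\T_t^\eps h-\T_t h}_\infty\to 0$ as $\eps\to 0$.

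The main obstacle is conceptual rather than computational: because $\T_t$ is genuinely non-compact, one cannot expect approximation in operator norm by compact (or finite-rank) operators, so the delicate point is to build compact $\T_t^\eps$ that nevertheless converge \emph{strongly}, and to handle the protrusion of $\operatorname{supp}\rho_\eps$ beyond $\partial\rr$ while keeping the construction linear --- exactly what the linear extension operator $E$ buys us. A secondary technical check, which I expect to be routine, is that the equicontinuity modulus above is uniform over $\B_r$; it is, since it depends on $h$ only through $\norm{h}_\infty$.
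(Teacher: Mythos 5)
Your proof is correct and follows essentially the same route as the paper's: mollify the image $\T_t h$ with a smooth kernel $\rho_\eps$, obtain compactness of the resulting integral operator, and deduce strong convergence from the uniform continuity of $\T_t h$ on the compact set $\rr$. Your two additions --- the Dugundji extension operator $E$, which makes the convolution well defined near $\partial\rr$ (a point the paper's change-of-variables step $\T_t^\eps h(x)=\int_{\B(\zero,1)}\rho(y)\,\T_t h(x-\eps y)\,dy$ quietly glosses over, since $x-\eps y$ may leave $\rr$), and the explicit Arzel\`a--Ascoli argument via the bound $\norm{\nabla\rho_\eps}_{L^1}\abs{x_1-x_2}$ in place of the paper's appeal to the well-known compactness of integral operators with smooth kernels --- are refinements of the same argument rather than a different one.
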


\begin{proof}
For each $t$, we rewrite the Koopman operator as $ \T_t h(x)  
         = \int_\rr \delta(x-y) \T_th(y) dy$, 
where $\delta$ is the Dirac delta. Now we use a family of integral operators with smooth kernels to approximate the above distribution. The idea is to approximate the Dirac delta using a smooth mollifier $\rho\in C_0^\infty(\rr)$ as we have seen in the proof of Theorem \ref{thm: feynman-kac_extension}. For each $\eps>0$, let 
$\rho_\eps(x):= \frac{1}{\eps^n}\zeta\left(\frac{x}{\eps}\right)$ and $\int_\rr \rho(y)dy = 1$. It can be verified that $\rho_\eps \subset C_0^\infty(\rr)$ with a compact support in $\B_\eps$. We define the approximation as
$\T^\eps_t h(x) = \int_\rr \rho_\eps(x-y) \T_th(y) dy$ for all $x\in\rr$. 
It is a well-known result that, for each $t$ and $\eps>0$, the operator $\T_t^\eps: C(\rr)\ra C(\rr)$ is compact given its smooth kernel. 

To verify the convergence property, the rest of the proof falls in a standard procedure. 
Now, for each $t$ and all $x\in\rr$, by change of variable, we have that $\T_t^\eps h(x)= \int_{\B_1} \rho(y) \T_th(x-\eps y) dy$. 
It follows that
\begin{small}
    \begin{equation}\label{E: bound}
    \begin{split}
        |\T_t^\eps h(x) - \T_t h(x)|
        \leq & \left|\int_\rr \rho_\eps(x-y) [\T_th(y) - \T_th(x)]dy\right|\\
        \leq & \int_{\B_1} \rho(y) |\T_th(x-\eps y)-\T_th(x)|dy\\
         \leq & \sup_{y\in\B_1} |\T_th(x-\eps y)-\T_th(x)|.
    \end{split}
\end{equation}
\end{small}

\noindent Note that, tor each $t$, given any $h\in C(\rr)$ and any Lipshitz continuous map $\hphi(t, \cdot): \rr\ra\rr $, the composition  $h\circ\hphi$ is a uniform continuous function on $\rr$. Therefore, by definition, $\T_th$ is also uniformly continuous on $\rr$.  The last term above converges to $0$ as $\eps\downarrow 0$ for all $x\in\rr$. Therefore, for each $t$, taking the supremum on both sides of  \eqref{E: bound} and sending $\eps$ to $0$, we have $\|\T_t^\eps h - \T_t h\|_\infty\ra 0$. 
\end{proof}

\begin{cor}
    For each $t, s>0$, there exists a family of compact linear operator $\{\T_t^\eps\}_{\eps>0}$, such that for all $h\in C(\rr)$, we have
$\|(\T_t^\eps \circ\T_s^\eps)  h - (\T_t \circ \T_s)h\|_\infty\ra 0$ as $\eps\ra 0$.
\end{cor}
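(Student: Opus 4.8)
The plan is to leverage Proposition \ref{prop: operator_approx} together with the uniform boundedness of the operators $\T_t$ and $\T_t^\eps$, rather than re-deriving the approximation from scratch. The key observation is that the composition error can be split into two pieces by inserting an intermediate term. Specifically, I would write
\begin{small}
\begin{align*}
\|(\T_t^\eps\circ\T_s^\eps)h - (\T_t\circ\T_s)h\|_\infty
&\le \|\T_t^\eps(\T_s^\eps h) - \T_t^\eps(\T_s h)\|_\infty \\
&\quad + \|\T_t^\eps(\T_s h) - \T_t(\T_s h)\|_\infty.
\end{align*}
\end{small}
The second term converges to $0$ as $\eps\ra 0$ by a direct application of Proposition \ref{prop: operator_approx}, applied to the fixed function $\T_s h\in C(\rr)$. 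So the crux of the argument is controlling the first term.

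For the first term, I would use linearity of $\T_t^\eps$ to rewrite it as $\|\T_t^\eps(\T_s^\eps h - \T_s h)\|_\infty$, and then bound it by $\|\T_t^\eps\|\cdot\|\T_s^\eps h - \T_s h\|_\infty$ using the operator norm. By Proposition \ref{prop: operator_approx}, $\|\T_s^\eps h - \T_s h\|_\infty\ra 0$ as $\eps\ra 0$. Hence, provided the operator norms $\|\T_t^\eps\|$ remain uniformly bounded in $\eps$ (for the fixed $t$), the first term also vanishes.

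The main obstacle, and the step requiring genuine verification, is therefore the uniform boundedness of $\{\T_t^\eps\}_{\eps>0}$ in operator norm. This follows from the explicit mollifier construction in the proof of Proposition \ref{prop: operator_approx}: since $\T_t^\eps h(x) = \int_{\B_1}\rho(y)\,\T_t h(x-\eps y)\,dy$ with $\rho\ge 0$ and $\int_\rr\rho(y)dy = 1$, we immediately obtain the pointwise estimate $|\T_t^\eps h(x)|\le \sup_{z\in\rr}|\T_t h(z)|\le \|\T_t\|\cdot\|h\|_\infty$, so that $\|\T_t^\eps\|\le\|\T_t\|$ uniformly in $\eps$. (Here I am implicitly using that $\T_t$ itself is a bounded operator, which holds because $\eta$ is nonnegative, giving $|\T_t h(x)| = \exp\{-\hv_t(x)\}|h(\hphi(t,x))|\le\|h\|_\infty$, so in fact $\|\T_t\|\le 1$.) Combining the two bounds via the triangle inequality and sending $\eps\ra 0$ completes the proof. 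The only subtlety worth flagging is to keep $t$ and $s$ fixed throughout, so that all convergences are in the single parameter $\eps$; no joint limit is needed.
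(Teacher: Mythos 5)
Your proof is correct, but it takes a genuinely different route from the paper's. The paper's proof exploits the semigroup identity $\T_t\circ\T_s=\T_{t+s}$ and asserts that the composed approximation is itself a single mollification, $(\T_t^\eps\circ\T_s^\eps)h=(\rho_\eps*\rho_\eps)*\T_{t+s}h$, where $\rho_\eps*\rho_\eps$ is a smoother kernel with the same approximate-identity property relative to the Dirac delta; convergence then follows by rerunning the estimate from Proposition \ref{prop: operator_approx} verbatim with this new kernel. You instead use the standard insert-and-split argument---strong convergence is preserved under composition whenever the approximating family is uniformly bounded in operator norm---applying Proposition \ref{prop: operator_approx} twice as a black box (once to $h$, once to the fixed function $\T_s h\in C(\rr)$) and verifying the uniform bound $\norm{\T_t^\eps}\le\norm{\T_t}\le 1$ directly from the mollifier representation and the nonnegativity of $\eta$. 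What each buys: the paper's route, when available, gives an explicit closed-form kernel for the composition and reduces the corollary to a literal repetition of the earlier proof; your route is more modular and, notably, more robust here, because the paper's kernel identity tacitly requires $\T_t$ to commute with convolution, i.e., $\T_t(\rho_\eps * g)=\rho_\eps*(\T_t g)$ with $g=\T_s h$, which holds for translation-type flows but not for a general nonlinear $\hphi$, since $\T_t g = \exp\set{-\hv_t}\,\bigl(g\circ\hphi(t,\cdot)\bigr)$. Your contraction-plus-triangle-inequality argument sidesteps this issue entirely and also generalizes immediately to compositions of more than two factors; the only implicit assumption you share with the paper is $\rho\ge 0$, which the paper's own estimate \eqref{E: bound} already uses, and your remark that $t$ and $s$ stay fixed so only the single limit $\eps\ra 0$ is taken is exactly the right bookkeeping.
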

\begin{proof}
    We can use the same $\{\T_t^\eps\}_{\eps>0}$ as in Proposition \ref{prop: operator_approx}. Then the composition $(\T_t^\eps \circ\T_t^\eps)h=(\rho_\eps*\rho_\eps)* \T_{t+s}h$, where $(\rho_\eps*\rho_\eps)$ is a smoother mollifier than $\rho_\eps$ with the same convergence property relative to the Dirac Delta. The rest of the proof should be the same as in Proposition \ref{prop: operator_approx}. 
\end{proof}

\begin{rem}
The convergence in Proposition \ref{prop: operator_approx} cannot be extended to the convergence w.r.t. the operator norm, i.e. $\|\T^\eps_t-\T_t\| := \sup_{\|h\|_\infty=1}\|(\T^\eps_t-\T_t)h\|\ra 0$. One may revisit the example  $h_n(x)=sin(nx)$ for $n\in\mathbb{N}$. It is clear that $\|h_n\|_\infty = 1$ for all $n$, but based on the inequality in \eqref{E: bound}, the uniform limit fails to exist due to the unbounded Lipschitz constants of $\{h_n\}$. However, for the purpose of this section, the convergence in Proposition \ref{prop: operator_approx} is already satisfactory. \Qed
\end{rem}

\subsection{Finite Dimensional Representation}\label{sec: finite_approx}
Since the kernels of $\{\T^\eps_t\}$ are smooth and  compactly supported, it is a well-known result that the operators within this family are Hilbert-Schmidt operators. 

We then investigate the spectral behavior of $\T_t^\eps$ within a separable Hilbert space $\H$ with the  inner product $\langle\cdot,\cdot\rangle:= \langle\cdot,\cdot\rangle_{L_2}$.  
Let $\{\zeta_i\}_{i\in \Z} \subset\H$ be the  eigenfunctions of $\T_t^\eps$. 
Then, for all $h\in C(\rr)\subset\H$, we have that  
$\T^\eps_t h = \sum_{i=0}^\infty e^{\lambda_i^\eps t} \langle h, \zeta_i\rangle \zeta_i$
as well as $\T^\eps_t \zeta_j= \sum_{i=0}^\infty e^{\lambda_i^\eps t} \langle \zeta_j, \zeta_i\rangle \zeta_i=e^{\lambda_j^\eps t}  \zeta_j$
where $\{\lambda_i^\eps\}$ are the corresponding eigenvalues in the $\log$-scale. 
\begin{rem}
In the above infinite-sum representations, we have implicitly assumed that the eigenfunctions are real-valued. However, this is not always the case. For more general situations, $\langle \zeta_i, \bar{\zeta}_i\rangle =1$ for all $i$, and $\T^\eps_t h = \sum_{i\in\Z} e^{\lambda_i^\eps t} \langle h, \bzeta_i\rangle \zeta_i$
as well as $\T^\eps_t \zeta_j= \sum_{i\in\Z} e^{\lambda_i^\eps t} \langle \zeta_j, \bzeta_i\rangle \zeta_i=e^{\lambda_j^\eps t}  \zeta_j$.  For simplicity, we still use the above expressions without further indicating whether they are real or complex-valued.
\Qed
\end{rem}

\begin{prop}\label{prop: finite_1}
For any fixed $t>0$, for any arbitrarily small $\tta>0$, there exists a sufficiently large $N$ and a finite-dimensional approximation \ymr{$\T_{t,N}^\eps$} such that 
$\|\T_{t,N}^\eps h-\T_t h\|_\infty<\tta, \;\;h\in C(\rr).$
\end{prop}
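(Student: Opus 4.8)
The plan is to build the finite-dimensional approximation by composing the two reductions already available in the excerpt: first replace $\T_t$ by the compact smoothing operator $\T_t^\eps$ of Proposition \ref{prop: operator_approx}, then truncate the spectral expansion of $\T_t^\eps$. Fixing $t>0$, $\tta>0$, and $h\in C(\rr)$, I would split the error with the triangle inequality,
\begin{equation*}
\norm{\T_{t,N}^\eps h-\T_t h}_\infty\leq \norm{\T_{t,N}^\eps h-\T_t^\eps h}_\infty+\norm{\T_t^\eps h-\T_t h}_\infty,
\end{equation*}
and treat the terms separately. The second term is disposed of immediately by Proposition \ref{prop: operator_approx}: choose $\eps>0$ small enough that $\norm{\T_t^\eps h-\T_t h}_\infty<\tta/2$ (the $\eps$ and, below, $N$ are allowed to depend on $h$, consistent with the remark that $\T_t^\eps\to\T_t$ does not hold in operator norm).

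For the first term I would define $\T_{t,N}^\eps$ as the truncation of the eigen-expansion, $\T_{t,N}^\eps h:=\sum_{i=0}^{N-1}e^{\lambda_i^\eps t}\langle h,\zeta_i\rangle\zeta_i$, a finite-rank (hence finite-dimensional) operator, and show that the tail $R_N h:=\sum_{i\geq N}e^{\lambda_i^\eps t}\langle h,\zeta_i\rangle\zeta_i$ vanishes in the sup norm. The decisive observation is that $\T_t^\eps$ is an integral operator on the compact set $\rr$ with a continuous (indeed smooth) kernel $k$ built from $\rho_\eps$ and the stopped flow, so each $\zeta_i=e^{-\lambda_i^\eps t}\T_t^\eps\zeta_i$ is continuous and, more importantly, the tail factors through the smoother. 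Using $\zeta_i=e^{-\lambda_i^\eps t}\T_t^\eps\zeta_i$ and boundedness of $\T_t^\eps$ on $L^2$, I would rewrite $R_N h=\T_t^\eps q_N$ with $q_N:=\sum_{i\geq N}\langle h,\zeta_i\rangle\zeta_i$, and then bound pointwise by Cauchy--Schwarz,
\begin{equation*}
\abs{R_N h(x)}=\abs{\textstyle\int_\rr k(x,y)\,q_N(y)\,dy}\leq\Big(\sup_{x\in\rr}\norm{k(x,\cdot)}_{L^2}\Big)\norm{q_N}_{L^2}.
\end{equation*}
Since $k$ is continuous on the compact set $\rr\times\rr$, the constant $C:=\sup_{x}\norm{k(x,\cdot)}_{L^2}$ is finite, and $\norm{q_N}_{L^2}\to 0$ as the tail of the $L^2$-convergent expansion of $h$ in $\{\zeta_i\}$. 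Thus $\norm{R_N h}_\infty\leq C\norm{q_N}_{L^2}\to 0$; picking $N$ with $\norm{R_N h}_\infty<\tta/2$ and combining with the first estimate yields $\norm{\T_{t,N}^\eps h-\T_t h}_\infty<\tta$.

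I expect the main obstacle to be exactly the upgrade from $L^2$-convergence of the spectral truncation to uniform convergence: the eigen-expansion converges a priori only in $\H=L^2(\rr)$, whereas the claim is stated in $\norm{\cdot}_\infty$. The device of rewriting the tail as $\T_t^\eps q_N$, so that the smoothing kernel turns $L^2$-smallness of $q_N$ into uniform smallness, is what drives the argument, and it is the only place where the compactness and kernel smoothness of $\T_t^\eps$ (as opposed to mere boundedness of $\T_t$) is essential. A secondary technical point I would address is the completeness and possible complex-valuedness of $\{\zeta_i\}$: to be rigorous one should either restrict to $(\ker\T_t^\eps)^\perp$, equivalently pass to the singular value decomposition where $q_N\to0$ in $L^2$ is automatic, or invoke the biorthogonal pairing $\langle\zeta_i,\bzeta_i\rangle=1$ noted after the spectral expansion; in either case the bound on $R_N$ is unchanged.
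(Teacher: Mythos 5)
Your proposal takes essentially the same route as the paper: the paper's proof likewise defines $\T_{t,N}^\eps$ as the $N$-term truncation of the eigen-expansion of the compact operator $\T_t^\eps$ from Proposition \ref{prop: operator_approx} and concludes by the same triangle-inequality combination of $\|\T_{t,N}^\eps h-\T_t^\eps h\|_\infty\ra 0$ with the strong convergence $\|\T_t^\eps h-\T_t h\|_\infty\ra 0$. The only difference is one of rigor rather than strategy: the paper merely asserts the uniform convergence of the truncation after noting trace-summability $\sum_j e^{\lambda_j^\eps t}<\infty$, whereas you justify that step explicitly via the factorization $R_N h=\T_t^\eps q_N$ and a Cauchy--Schwarz bound through the smoothing kernel, which correctly upgrades $L^2$-smallness of the tail to sup-norm smallness under the same standing idealization (a genuine square-integrable kernel and a complete eigensystem for $\T_t^\eps$) that the paper itself adopts in Section \ref{sec: finite_approx}.
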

\begin{proof}
The Hilbert-Schmidt operator $\T^\eps_t$ is also necessarily a finite-rank operator, i.e., for any complete orthonormal basis $\{g_j\}\subset \H$, $\sum_{j=0}^\infty\langle \T^\eps_t g_j, g_j\rangle  = \sum_{j=0}^\infty\sum_{i=0}^\infty e^{\lambda_i^\eps t} |\langle g_j, \zeta_i\rangle|^2= \sum_{j=0}^\infty e^{\lambda_j^\eps  t}<\infty$. 
Denoting the finite truncation as $\T_{t,N}^\eps:=\sum_{i=0}^{N-1} e^{\lambda_i t} \langle\; \cdot\;, \zeta_i\rangle \zeta_i$,  we have $\lim\limits_{N\ra\infty}\|\T_{t,N}^\eps h-\T^\eps_t h\|_\infty=0$ for all $h\in C(\rr)$.
Combining Proposition \ref{prop: operator_approx}, we have the desired property. 
\end{proof}

\begin{rem}\label{rem: extension}
In contrast to the approach described in \cite[Section III, 2)]{mauroy2019koopman}, wherein the finite-rank operator is limited to preserving only a finite-dimensional subspace of the initial function space, our proposed finite-rank approximate operator, as in Proposition \ref{prop: finite_1}, demonstrates better theoretical robustness. Notably, it retains a valid infinite-dimensional domain, i.e. $h\in C(\rr)$, identical to that of
$\T_t$ for all $t$.

Particularly, suppose $\{\zeta_i\}$ are complex-valued and $U$ is the solution to \eqref{E: dual}, by the eigenvalue problem in Proposition \ref{prop: semigroup_eigenfunction}, we have that $ U = \T_tU \approx \T_t^\eps U = \sum_{i\in\Z} e^{\lambda_i^\eps t} \langle U, \bzeta_i\rangle \zeta_i\approx \sum_{i=-N}^N e^{\lambda_i^\eps t} \langle U, \bzeta_i\rangle \zeta_i$ and $\langle U, \bzeta_i\rangle =  \langle \T_tU, \bzeta_i\rangle \approx e^{\lambda_i^\eps t}\langle U, \bzeta_i\rangle$  for some $i$ such that $\langle U, \bzeta_i\rangle\neq 0$. Then, the corresponding $e^{\lambda_i^\eps t}\in\R$ is an approximation of the eigenvalue $1$ of  $\T_t$, but the eigenfunction $\zeta_i$ is not a direct approximation for $U$. We still need the expansion form to properly approximate $U$. 
 \Qed
\end{rem}

\subsection{Feasibility of Data-driven Approximation}\label{sec: data_approx}
For any fixed $\Delta t$, based on Proposition \ref{prop: finite_1}, the original operator $\T_\Delta$ (recall Definition \ref{E: T_delta} for $\T_\Delta$)  can be approximated (strongly)  with an arbitrarily high precision by a finite-dimensional bounded linear operator \ymr{$\T^\eps_{\Delta,N}$}, which preserves the non-ignorable eigenmodes. We would like to further use a data-driven approach to learn the eigenvalues and the eigenfunctions for this finite-dimensional operator.

To do this,  we consider a sufficiently dense $N$-dimensional subspace $\mathcal{F}_N\subset C(\rr)$ and choose observable test functions 
$\Zk_N(x)=\left[\zk_0(x),\zk_1(x),\cdots,\zk_{N-1}(x)\right]$ from $\F_N$.  Then, 
$\T^\eps_{\Delta,N} \Zk_N(\cdot) = \sum_{i=0}^{N-1} e^{\lambda_i^\eps t}  \zeta_i(\cdot) \pk_i \xrightarrow{\|\cdot\|_\infty} \sum_{i=0}^\infty e^{\lambda_i^\eps t}  \zeta_i(\cdot) \pk_i$,
where $\pk_i=[\langle \zk_0, \zeta_i\rangle,\; \langle \zk_1, \zeta_i\rangle, \cdots \langle \zk_{N-1}, \zeta_i\rangle]$ represents the vector of the projection inner products. If $\operatorname{span}\{\zk_i\}_{i=0}^\infty$ is also dense in $C(\rr)$, then this finite-dimensional approximation 
is of the desired form as in \eqref{E: approx_T}. 

Suppose that $\F_N$ is  given arbitrarily,  then each $\zk_i$ 
may not be invariant within $\{\zeta_i\}_{i=0}^{N-1}$  
under $\T_\Delta$. However, 
when restricted on the $N$-dimensional truncation, the approximate operator $\T^\eps_{\Delta,N}$ preserves the space $\{\zeta_i\}_{i=0}^{N-1}$. 
We aim to express $\{\zeta_i\}_{i=0}^{N-1}$ by functions from $\operatorname{span}\{\zk_0, \zk_1, \cdots,\zk_{N-1}\}$. 

Taking advantage of the linear transformation, for systems with unknown dynamics, data-driven methods will approximate
the Zubov-Koopman eigenvalues and eigenfunctions by fitting the data 
$\Zk_N(x^{(m)})$ and $\T_{\Delta}\Zk_N(x^{(m)})$  for some $\{x^{(m)}\}_{m=0}^{M-1}\subset\rr$. We also denote by $\mathfrak{X}, \mathfrak{Y}\in\C^{M\times N}$  the stack of input and output data. 
The best-fit matrix $\Tb$ is formulated as an optimization problem $\Tb = \operatorname{argmin}_{A\in \C^{N\times N}}\|\mathfrak{Y}-\mathfrak{X}A\|_F.$

\ym{
The learned operator has the following sense of approximations for sufficiently large $N$ and $M$. Due to its substantial similarity to EDMD \cite{williams2015data, Brunton2022survey}, we omit the proof for this part of the convergence.}
\begin{enumerate}
\item Let $(\mu_i, \vb_i)_{i=0}^{N}$ be the eigenvalues and eigenvectors of $\Tb$. Then, for each $i$, the learned eigenvalues $\tl_i$  is such that $\lambda_i^\eps\approx\tl_i=\log(\mu_i)/t$, and the learned eigenfunction $\tzeta_i$ satisfies  $\zeta_i(x)\approx \tzeta_i(x)=\Zk_N(x)\vb_i$. \footnote{The eigen-approximation is only for the compact operator $\T_t^\eps$ introduced in Section \ref{sec: compact_operator}. The original operator $\T_t$ may not possess non-trivial point spectrum other than $1$. }
    \item For any $\hk\in\operatorname{span}\{\zk_0, \zk_1, \cdots,\zk_{N-1}\}$ such that $\hk(x)=\Zk_N(x)\mathbf{w}$ for some column vector $\mathbf{w}$, we have that 
    \begin{small}
            \begin{equation}\label{E: finite_expansion_approx}
        \T_t\hk(\cdot)\approx\T_{t,N}^\eps\hk(\cdot)\approx \Zk_N(\cdot)(\Tb\mathbf{w}). 
    \end{equation}
    \end{small}
\end{enumerate}

By the well-known universal approximation theorem, all of the function approximations from above should have uniform convergence. 

\section{Data-driven Algorithms}\label{sec: algrithms}

By the finite rank expansion \eqref{E: finite_expansion_approx}, one can easily verify that
    \begin{equation}\label{E: approx_U}
    \T_{k\Delta}\hk(\cdot)\approx \Zk_N(\cdot)(\Tb^k\mathbf{w}) = \sum_{i=0}^{N-1}e^{\tl_ik\Delta t}\tzeta_i(\cdot)(\vb_i\cdot\mathbf{w}), 
\end{equation}
where $\mathbf{w}$ is such that $h(\cdot)=\Zk_N(\cdot)\mathbf{w}$. \ym{Recall the beginning part of Section IV. B, particularly Eq. \eqref{E: k_iteration} and $\lim_{k\ra\infty}\T_{k\Delta}h =U$ for any $h$ such that $h(\xeq)=1$. Eq. \eqref{E: approx_U} is then a numerical version for approximating $U$ using iterations.} By virtue of Proposition \ref{prop: semigroup_eigenfunction} and Corollary \ref{cor: main}, for all $t$, the real-valued eigenfunction of $\T_t$ with the corresponding eigenvalue $1$ should be $U$, up to multiplicative constants.  Similarly, the only non-vanishing mode(s) as $k\ra\infty$ should be the ones with $\tl_i=0$, which will be readily a solution to the Zubov's dual equation suppose it is real-valued. If this happens, we can directly choose  $\tzeta_i$ associated with $\tl_i=0$ as the approximation for $U$ without any iteration. However, due to the numerical error and the possibly complex-valued eigenvectors of $\Tb$, 
one still needs to use the iteration form for some sufficiently large $k$ as the final approximation. 

Nonetheless, the key is to learn $\T_\Delta$ using observable data of trajectories within a relatively shorter time period, and obtain the matrix $\Tb$ so that the  \eqref{E: approx_U} can be utilized. To do this, we modify the existing Koopman learning techniques for Zubov-Koopman operators $\T_\Delta$, as defined in \eqref{E: T_delta}, with a fixed training time interval $\Delta t$. 


\subsection{Generating Training Data}
As we have briefly discussed about the feasibility of using data-driven approaches to approximate the operator $\mathcal{T}_\Delta$ for each $\zk\in\Zk_N$ and each $x\in\rr$, we consider $\zk(x)$ as the features and $\T_\Delta\zk(x):=\exp\left(-\int_0^{\Delta t}\eta(\hphi(s, x))ds\right)\zk(\hphi(\Delta t,x))$ as the labels. We inevitably need to acquire the stopped-flow map $\hphi$ and compute the integral. Drawing inspiration from \cite{kang2021data}, for each termination time, we can assess both the trajectory (without stopping) and the integral, i.e. the pair $\left(\phi(t,x), \int_0^t\eta(\phi(s, x))ds\right)$,  by solving a single augmented ODE system 
    \begin{equation}\label{E: augmented}
    \begin{split}
        &\dot{\xb}(t)=f(\xb(t)), \;\;\xb(0)=x\in\R^n,\\
        &\dot{I}(t) = \eta(\xb),\;\;I(0)=0.
    \end{split}
\end{equation}
 We propose the following algorithm based on this technique for our purpose of evaluating $\T_\Delta \zk(x)$ for each $\zk$ and each $x$. 
\begin{alg}\label{alg: data_generate}
Let the region of interest $\rr$, the dictionary $\Zk_N(\cdot)=\left[\zk_0(\cdot),\zk_1(\cdot),\cdots,\zk_{N-1}(\cdot)\right]^T$, the  function $\eta\in C(\R^n)$, and the time interval $\Delta t$ be given. Then, for each $i\in\{0,1, \cdots N-1\}$  and $x\in\rr$, we compute $\T_\Delta \zk_i(x)$ as follows. 
    \begin{enumerate}
        \item (Solving \eqref{E: augmented}) Choose a number of partition points  $\Nb$. Generate a uniform partition 
        with $\Nb$ points from the interval $[0, \Delta t]$. Denote the size of each partition as $\delta t$. Solve \eqref{E: augmented} using an ODE solver with this partition.  Denote the solution arrays as $\vect(\phi,x):=[\phi(j\cdot\delta t,x),\;\text{for}\; j \in\{0, 1, \cdots, \Nb-1\}]$ 
        and $\vect(I):=[I(j\cdot\delta t),\;\text{for}\; j \in\{0, 1, \cdots, \Nb-1\}]$.
        Also, denote the $j$-th element of the vectors as $\vect_j( )$.
        \item (Identifying out-of-domain index) Identify the first element numbered as $\iota$ in $\vect(\phi, x)$ that is not within $\rr$. 
        \item (Identifying the intersection point on the boundary) If $\iota \notin\{0, \cdots, \Nb-1\}$, continue to the next step. Otherwise,   find the line segment connecting the $(\iota-1)$-th and $\iota$-th elements (i.e. $\vect_{\iota-1}(\phi, x)$ and $\vect_{\iota}(\phi, x)$). Find the intersection point $\rr_{\text{sect}}$ of the line segment and $\partial\rr$. 
        \item (Modifying the trajectory and the integral) If $\iota \notin\{0, \cdots, \Nb-1\}$, continue to the next step. Otherwise, for each $j\in\{\iota, \cdots \Nb-1\}$, replace $\vect_j(\phi, x)$ by $\rr_{\text{sect}}$, and replace $\vect_j(I)$ by $\vect_{\iota-1}(I)+ (j-\iota+1)\rr_{\text{sect}}\cdot\delta t$. Keep the notation for the modified $\vect(\phi,x)$ and $\vect(I)$. 
        \item $\T_\Delta \zk_i(x) = \exp\set{-\vect_{\Nb-1}(I)}\zk_i(\vect_{\Nb-1}(\phi,x)) $. 
    \end{enumerate}
\end{alg}

We summarize the algorithm for generating training data for one time period in Algorithm \ref{alg: one_time}. 

\setcounter{algorithm}{1}
\begin{algorithm}[H]
	\caption{Generating Training Data}\label{alg: one_time} 
	\begin{algorithmic}[1]
		\Require  $f$, $\eta$, $\rr$, $\Zk_N$, $\Delta t$, and a set  $\set{x^{(m)}}_{m=0}^{M-1}\subset \rr$.
  \For{$m$ \textbf{from} 0 \textbf{to} $M-1$}
  \For{$i$ \textbf{from} 0 \textbf{to} $N-1$}
  \State Calculate $\zk_i(x^{(m)})$
   \State Calculate $\T_\Delta\zk_i(x^{(m)})$ using Algorithm \ref{alg: data_generate}
 \EndFor
 \State Stack 
 \begin{small}
      \begin{equation*}
     \T_\Delta\Zk_N(x^{(m)})=[\T_\Delta\zk_0(x^{(m)}),  \cdots,\T_\Delta\zk_{N-1}(x^{(m)})]
 \end{equation*}
 \end{small}
 \EndFor
 \State Stack \begin{small}
     $\mathfrak{X}, \mathfrak{Y}\in\C^{M\times N }$
 \end{small} such that 
\begin{small}
    $\mathfrak{X}=[\Zk_N(x^{(0)}), \Zk_N(x^{(1)}),\cdots, \Zk_N(x^{(M-1)})]^T$
\end{small} 
 and \begin{small}
     $\mathfrak{Y}=[\T_\Delta\Zk_N(x^{(0)}), \T_\Delta\Zk_N(x^{(1)}),\cdots, \T_\Delta\Zk_N(x^{(M-1)})]^T$
 \end{small}
	\end{algorithmic}
\end{algorithm}

\subsection{EDMD Algorithm for Operator Learning}\label{sec: edmd}
After gathering training data, the rest of the training process should be the same as the existing Koopman operator learning techniques. We perform the EDMD algorithm 
to obtain the final approximation for $\T_\Delta$. 

EDMD algorithm provides an estimation of $\T_\Delta$ using one time period observation data. We use Algorithm \ref{alg: one_time} to obtain training data $(\mathfrak{X}, \mathfrak{Y})$. As we have briefly discussed in Section \ref{sec: data_approx}, we need to find $\Tb = \operatorname{argmin}_{A\in\C^{N\times N}}\|\mathfrak{Y}-\mathfrak{X}A\|_F$ in order to obtain the approximation of the form \eqref{E: finite_expansion_approx}. For EDMD, the $\Tb$ is given in closed-form as 
\begin{small}
    \begin{equation}
    \Tb = \left(\mathfrak{X}^T\mathfrak{X}\right)^\dagger\mathfrak{X}^T\mathfrak{Y},
\end{equation}
\end{small}

\noindent where $\dagger$ is the pseudo inverse. 

\subsection{Predicting ROA and Constructing Lyapunov Function}\label{sec: lya_nn}
To predict $U$, Eq~\eqref{E: approx_U} is also ready to use. We simply pick a tolerance and the largest number of iteration $K$, then iterate \begin{small}
    $\Tb$ 
\end{small} until \begin{small}
    $\|\Tb^k-\Tb^{k-1}\|_F$
\end{small}  reaches the threshold or $k=K$, whichever comes first. For simplicity, we can also choose the dictionary $\Zk_N$ to include a real-valued $\zk_i$ such that $\zk_i(\xeq)=1$. Then, we obtain a Zubov-Koopman approximation of $U$,  
\begin{small}
    \begin{equation}\label{E: edmd_iter}
    \uedmd (\cdot) := \Zk_N(\cdot) (\Tb^k\mathbf{w}), 
\end{equation}
\end{small}

\noindent where $\mathbf{w}$ is the $i$-th standard basis vector. Since  $\uedmd$ and $U$ are uniformly close, we can approximate the ROA (\ymr{w.r.t. the Hausdorff metric}) by the largest connected positive level sets of $\uedmd$. 

As for constructing a Lyapunov function, in virtue of \eqref{E: u_satisfaction} and Remark \ref{rem: lya-converge}, we may not directly use the result $\uedmd$ just yet. Fortunately, by the viscosity regularity of $U$ and the alternative comparison principles given in Lemma \ref{lem: facts}, one can seek a smooth function that uniformly approximates $\uedmd$, and hence uniformly approximates $U$, with its Lipschitz constant also converges to that of $U$. 

We achieve this extra modification by neural networks (NN) using a new set of samples $\{x^{(m)}\}$ (could be different from the one used in Algorithm \ref{alg: one_time}) and $\{\uedmd(x^{(m)})\}$. This NN approximation will be named  $\unn$. We omit this algorithm as it follows the standard procedure. The formal verification of $\unn$ utilizes satisfiability modulo theories (SMT) solvers and follows the exact procedures in \cite[Section V]{liu2023towards}.

\begin{rem}
Note that for any Lipschitz continuous function, the Lipschitz constant of its smooth neural approximations typically converges to the true Lipschitz constant. This phenomenon has been extensively studied by \cite{khromov2023some}. When verifying the Lipschitz constant of the residual is challenging, assuming this phenomenon holds is reasonable. \Qed
\end{rem}

We summarize the learning procedures in Fig.~\ref{fig: diagram}.
\begin{figure*}[!t]
\centerline{\includegraphics[scale = 0.50
]{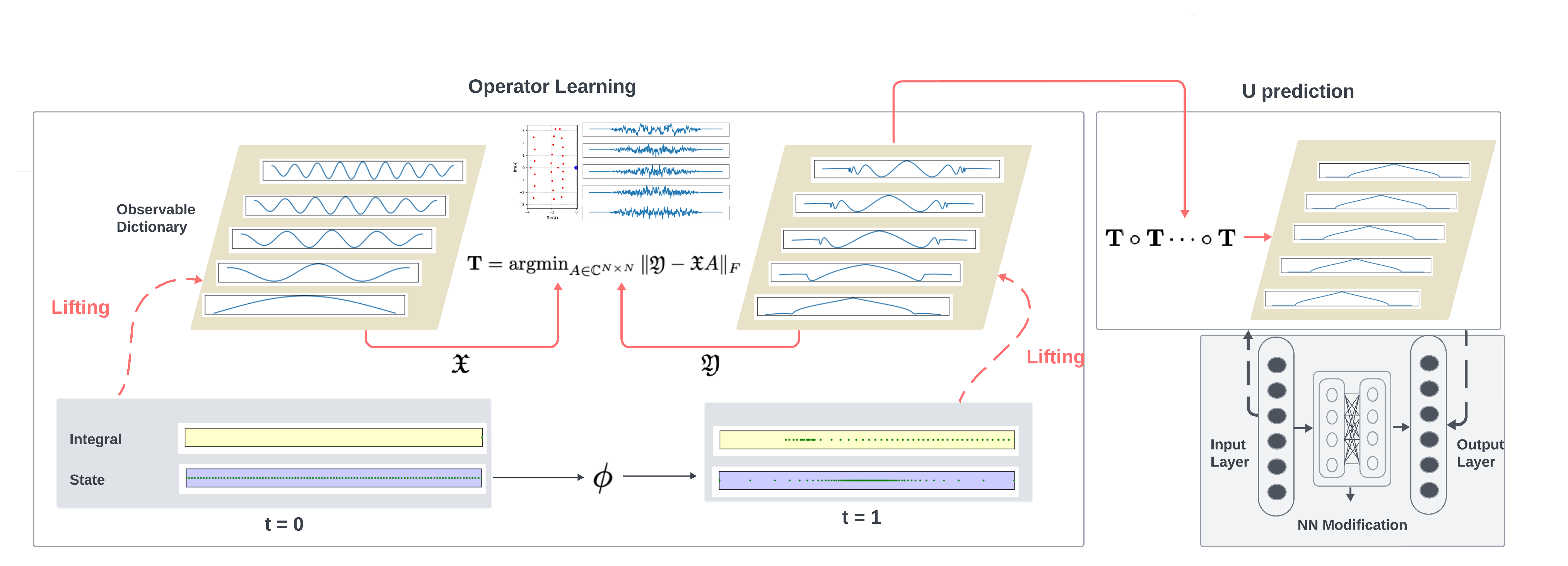}}
\caption{Zubov-Koopman approach of ROA prediction and Lyapunov function construction.}
\label{fig: diagram}
\end{figure*}

\section{Numerical Examples}
\label{sec: num}
In this section, we provide numerical examples to demonstrate
the proposed Zubov-Koopman approach for predicting ROAs, as well as learning and verifying neural
Lyapunov functions. For consistency, the reported computation
times were recorded on a MacBook Pro equipped
with an M1 Pro  10-Core CPU and 32GB
of memory. Multiprocessing was applied to all the examples, and the recorded calculation times for smaller data volumes can deviate from real values due to computational overhead. Code for the numerical examples can be found at \url{https://github.com/Yiming-Meng/Zubov-Koopman-Operator-Learning}. An illustration of the overall computational approach using a one-dimensional example can be found in   Appendix \ref{sec: num_1d}.

\subsection{Reversed Van der Pol Oscillator}
Consider the reversed Van der Pol oscillator
   \begin{equation*}
\dot \xb_1(t) = -\xb_2(t), \quad 
\dot \xb_2(t) = \xb_1(t) - (1 - \xb_1^2(t))\xb_2(t), 
\end{equation*} 
 with $\xb(0):=[\xb_1(0), \xb_2(0)]=[x_1, x_2]$. The system has a stable equilibrium point at $\zero$. We use this example to compare the data efficiency of predicting the ROA of $\set{\zero}$ as well as finding a Lyapunov function between the Zubov-Koopman approach and the data-driven approach presented in \cite{kang2021data}. 

In this example, we set  $\eta(x)=(x_1^2+x_2^2)/5$, and pick the  region of interest as $\rr=[-3, 3]^2$. A total of $M=200^2$, $250^2$, and $300^2$ uniformly
spaced samples $\set{x^{(m)}}$ in $\rr$ are generated respectively for three parallel experiments. 

\subsubsection{Zubov-Koopman method}
We simulate the trajectory up to $\Delta t= 1.5$. The dictionary $\Zk:= [\zk_{0, 0}, \zk_{0, 1}, \zk_{1,0}, \cdots,\zk_{i, j}, \cdots,\zk_{2N-1, 2N-1}]$ is selected in a similar manner as in the one-dimensional example  with adjustments for two-dimensional inputs, i.e., 
we set $\zk_{i, j}(x_1, x_2) = \cos\left(\frac{2\pi (i x_1+jx_2)}{3}\right )\exp\set{-(x_1^2+x_2^2)/4}$
for each $i$ and $j$. We let $N=50$, which means a total of $9801$ observable basis functions are used. To obtain $\uedmd$ through iterations, we set the termination tolerance to $10^{-2}$ with a maximum of $8$ iterations.  The prediction of the ROA (or $U$) follows the same procedure as in Section \ref{sec: num_1d}. 

For the construction and verification of a Lyapunov function, we need to incorporate an additional NN modification, as described in Section \ref{sec: lya_nn}, due to inaccuracies in the derivatives of $\uedmd$. The training set should be independent of the one used for obtaining $\uedmd$. Therefore, for all the experiments at this stage, we use $300^2$ uniformly spaced samples, along with the evaluations of $\uedmd$ at those points, to train $\unn$.  We use a network with $2$ hidden layers, each containing 15 neurons. We terminate
training when the mean-square training loss was smaller
than $10^{-8}$ or after $300$ epochs. 

\subsubsection{Data-Driven method}
As for the data-driven method in \cite{kang2021data}, for each experiment, we use the same samples as above and generate trajectory data up to a termination time $t = 10$, or until $\int_0^t \eta(\phi(s, x)) ds\geq 200$. We then use $\set{x^{(m)}}$ and $\left\{\exp\{-\int_0^t\eta(\phi(s, x^{(m)})) ds\}\right\}$ to train a neural-network $U_{\text{DD}}$. We use a network model with $2$ hidden layers, each containing 15 neurons. We terminate
training when the mean-square training loss is smaller
than $10^{-8}$ or after $500$ epochs. We predict the ROA as the largest connected positive level sets of $U_{\text{DD}}$. This neural solution $U_{\text{DD}}$ is also ready to be verified as a Lyapunov function without extra modification. 

\subsubsection{Verification}
Please note that the approximations $\unn$ and $U_{\text{DD}}$ are intended for Zubov's dual equation, not the original equation. To ensure consistency with the procedure for verifying the neural Lyapunov functions as described in \cite{liu2023towards}, it is important to pay attention to sign conversion.

\subsubsection{Experiment results}
The comparison results are reported in Table \ref{tab: nn_data}. In the table, `IVP solving' corresponds to steps 1) to 4) of Algorithm \ref{alg: data_generate}, while `stacking' involves preparing the data for learning. For the Data-Driven method, `function learning' refers to the NN training, whereas for the Zubov-Koopman approach, it pertains to the iterative procedure to obtain $\uedmd$ (see \eqref{E: edmd_iter}). We also recorded the NN final losses for both approaches whenever a neural network was employed during the process. 

We show the predicted and verified boundaries of the ROA for $M=100^2$ in Fig. \ref{fig: vdp}. The prediction using the Zubov-Koopman approach shows overall better accuracy, while the verified regions are similar for all the methods. On the other hand, with $M=100^2$ initial samples, the region verified by the Data-driven method is slightly smaller. This is because the verification relies on NN approximation for both methods, but the Zubov-Koopman approach's NN stage is based on an already good approximation of $U$, which is not constrained by the initial sample size.  These phenomena also reflect that the  $\uedmd$ and the data generated for the Data-driven method have similar quality in cases where $M=200^2$ and $300^2$.

\begin{figure}[!t]
\centerline{\includegraphics[scale = 0.45
]{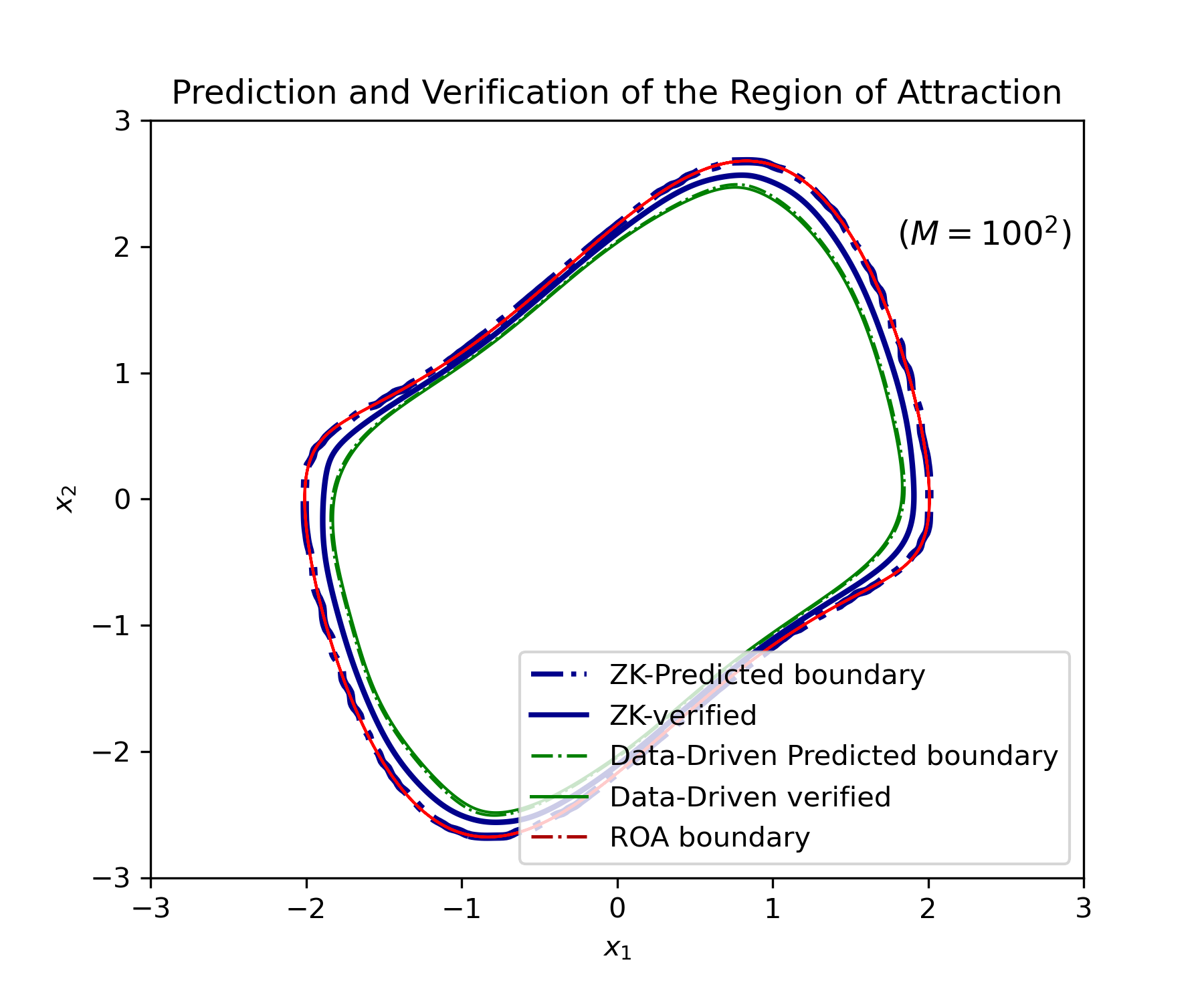}}
\caption{Predictions and verification of the ROA for the reversed Van der Pol with Zubov-Koopman and Data-driven method.}
\label{fig: vdp}
\end{figure}

\begin{table*}[h]
\caption{Verification of Neural Lyapunov Functions for Reversed Van der Pol Oscillator}
\centering
\begin{tabular}{|c|c|c|c|c|c|c|c|c|}
\hline
\textbf{Approaches} & \textbf{Sample size} & \makecell{\textbf{IVP solving}\\ \textbf{time}} & \textbf{Stacking time} & \makecell{\textbf{Operator}\\ \textbf{learning time}} & \makecell{\textbf{Function} $U$\\ \textbf{learning time}} & \makecell{\textbf{Modification}\\ \textbf{time ($300^2$ samples)}} & \makecell{\textbf{NN}\\ \textbf{final loss}}  & \makecell{\textbf{Verified}\\ \textbf{volume}}\\
\hline
Data-driven & $100\times 100$ & 10.13(s) & 0.81 (s) &  - & 54.24(s) & - & $8.48\times10^{-5}$ & 84.71\%\\
\hline
ZK + NN & $100\times 100$ & 7.68(s) & 5.40 (s) & 89.26(s) & 52.91(s) & 482.10 (s) & $1.25\times10^{-6}$ &  91.68\%  \\
\hline
\hline
Data-driven & $200\times 200$ & 40.83(s) & 0.20(s) &  - & 366.90(s) & - & $1.35\times10^{-6}$ & 88.87\% \\
\hline
ZK + NN & $200\times 200$ & 31.24(s) & 12.59(s) & 128.57 (s) & 53.18(s) & 482.10 (s) & $1.25\times10^{-6}$ & 90.32\% \\
\hline
\hline
Data-driven & $300\times 300$ & 87.06(s) & 0.32(s) & - & 545.02(s) & - & $9.37\times 10^{-7}$ & 87.09\% \\
\hline
ZK + NN & $300\times 300$ & 61.69(s) & 16.82(s) & 178.90(s) & 51.10(s)  & 486.67(s) & $1.05\times 10^{-5}$ & 89.70\% \\
\hline
\end{tabular}
\label{tab: nn_data}
\end{table*}

\subsection{Polynomial System}

We consider the polynomial system in \cite{mauroy2016global}:
\begin{equation*}
\dot \xb_1(t) = \xb_2(t), \quad 
\dot \xb_2(t) = -2\xb_1(t) + \frac13\xb_1^3(t) - \xb_2(t).
\end{equation*}

The origin of this system is known to have an unbounded domain of attraction, delimited by the stable manifolds of the saddle equilibrium points at $(\pm\sqrt{6},0)$ \cite{liu2023towards}. We restricted our computations to the domain $\rr=[-6,6]^2$, and used the proposed Zubov-Koopman approach to predict the refined ROA within $\inte(\rr)$. We also use NN modification to construct a Lyapunov function\footnote{In this case, this Lyapunov function is also a Lyapunov-barrier function. Previous work \cite{meng2022smooth, meng2023lyapunov} has shown that under mild conditions, Lyapunov and barrier functions can be united and behave as a single Lyapunov function.} to characterize both stability and safety. 

We use three experiments to investigate the effect of sample size and the width of the neural network:
(1) $M=300^2$, $\text{width}=15$;  (2) $M=500^2$, $\text{width}=15$; (3) $M=500^2$, $\text{width}=30$. In all of the experiments, we choose $\Delta t = 2$, $N=50$, and $\zk_{i, j}(x_1, x_2) = \cos\left(\frac{2\pi (i x_1+jx_2)}{6}\right )\exp\set{-(x_1^2+x_2^2)/25}$
for all $i,j\in\{0, 1, \cdots, 2N-1\}$. We use $2$-layer NN model for the modification stage. 
The verified portion of the refined ROA are $96.41\%$, $98.12\%$ and $98.50\%$. Fig. \ref{fig: poly1}  depicts the predicted and verified ROA with safety guarantees for the experiment (3). It can be seen that even though the actual ROA is unbounded, the proposed algorithm approximates the largest ROA that can be certified by the choice of neural Lyapunov function.

\begin{figure}[!t]
\centerline{\includegraphics[scale = 0.45
]{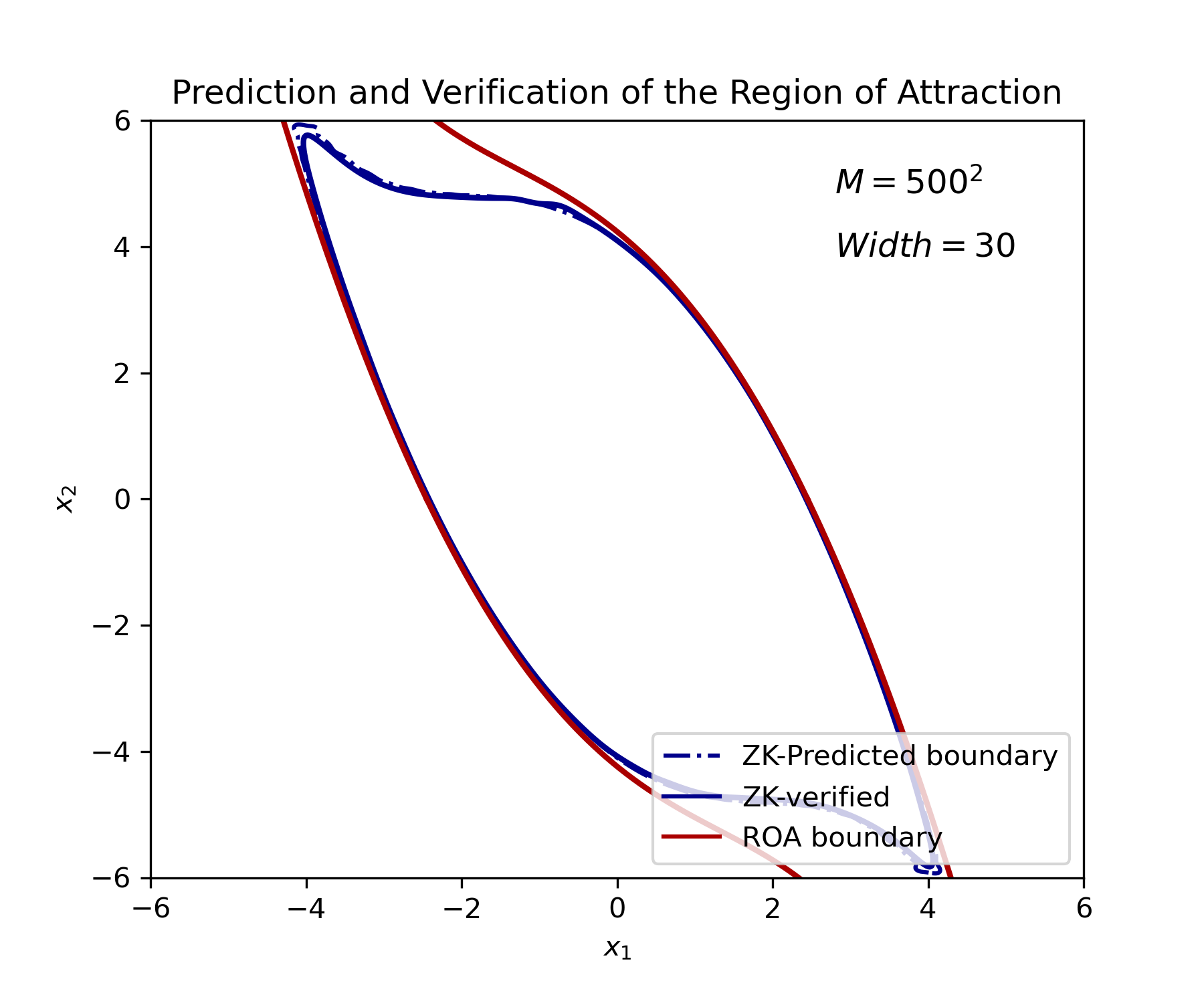}}
\caption{Predictions and verification of the  ROA with safety for the Polynomial System with Zubov-Koopman approach along with a 2-layer, 30-width neural network.}
\label{fig: poly1}
\end{figure}

\ym{\subsection{Two-Machine Power System}

Consider the two-machine power system \cite{vannelli1985maximal} modelled by $$\dot{\xb}_1(t) = \xb_2(t), \;\dot{\xb}_2(t) =  -0.5\xb_2(t) - (\sin(\xb_1(t) +\delta)-\sin(\delta)), $$
where $\delta = \frac{\pi}{3}$. We restrict our computations to the domain  $\mathcal{R}=[-2, 3]\times[-3,2]$ and use the proposed Zubov-Koopman approach to predict the refined ROA of the origin.  Note that the system has an unstable equilibrium point at $(\pi/3,0)$.   

In the experiment, we choose $\Delta t = 2$, $N=40$, $M=300^2$, and $\zk_{k, l}(x_1, x_2) = \exp\{i\frac{\pi}{6}(kx_1 + lx_2)\}$
for all $k,l\in\{-(N-1),  -(N-2),  \cdots, N-1\}$, where $i$ in this example is the imaginary unit, i.e., $i^2=-1$. We use a neural network with two hidden layers, 30 neurons each, for the modification stage.

The example (see Fig. \ref{fig: power}) shows that the proposed method outperforms sum-of-squares (SOS) Lyapunov functions~\cite{topcu2010help} when dealing with non-polynomial nonlinearities. 
Note that the verified ROA using dReal \cite{gao2013dreal} may appear smaller because the verification using an SMT solver tends to be more conservative, i.e.,  it only returns the largest level set within which the flow is forward invariant. However, to guarantee a subset of the Region of Attraction (ROA), forward invariance is not necessary. Readers can, based on their needs, use our approach as a data-driven numerical algorithm to predict (larger) ROA and roughly gauge where the boundary approximately lies.

\begin{figure}[!t]
\centerline{\includegraphics[scale = 0.46
]{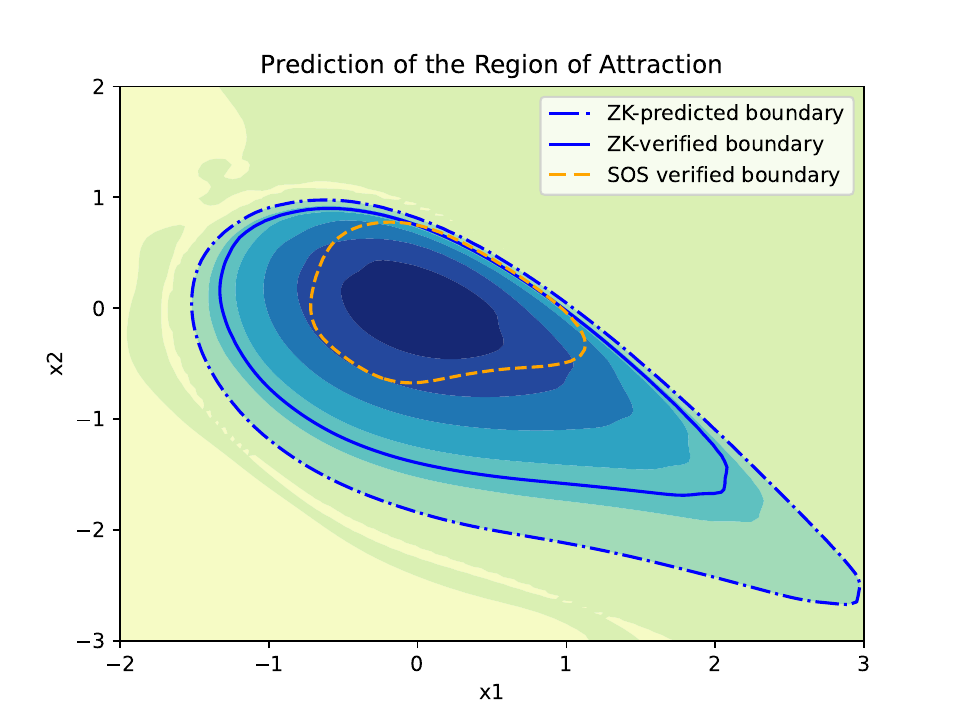}}
\caption{Verification of the  ROA with Safety for the Two-Machine Power System with Zubov-Koopman approach using $M=300^2$ samples.}
\label{fig: power}
\end{figure}

\subsection{A 3-Dimensional System}
We use this example to showcase the ability of the proposed method in predicting the ROA of a 3-dimensional system. Consider 
$$\left\{\begin{array}{lr} 
\dot \xb_1(t) = \xb_2(t) + 2\xb_2(t)\xb_3(t), \quad 
\dot \xb_2(t) = \xb_3(t),\\
\dot \xb_3(t) = -0.5\xb_1(t)-2\xb_2(t)-\xb_3(t). 
\end{array}\right.$$ 
We restrict our computations to the domain  $\mathcal{R}=[-2, 2]^3$ and use the proposed Zubov-Koopman approach to predict the refined ROA of the origin. In the experiment, we choose $\Delta t = 2$, $N=10$, $M=40^3$, and $\zk_{i,j,k}(x_1, x_2, x_3) = \exp\{i\frac{\pi}{6}(ix_1 + jx_2 + kx_3)\}$
for all $i,j,k\in\{-(N-1),  -(N-2),  \cdots, N-1\}$. We use a neural network with one hidden layer and $100$ neurons for the modification stage. The verified ROA within the region of interest is $\set{x\in\mathcal{R}: U_{\text{NN}}\geq 0.2}$. We present the $x_1-x_2$ section by performing the projection with $x_3$ fixed at $0$. Fig. \ref{fig: 3d} demonstrates that the verified ROA within $\mathcal{R}$ is almost the largest forward invariant set within $\mathcal{R}$. Again, it is worth mentioning that we restrict the ROA to \(\mathcal{R}\), which is within the actual ROA for the system. Therefore, the resulting ROA estimates from the proposed Zubov-Koopman approach and the SOS approach are not comparable (i.e., one is not a subset of the other). 
However, we would like to emphasize that the proposed Zubov-Koopman approach is performed to estimate the ROA for unknown systems (using a much less dense set of samples compared to previous examples), while the SOS technique requires exact model information.}

\begin{figure}[!t]
\centerline{\includegraphics[scale = 0.46
]{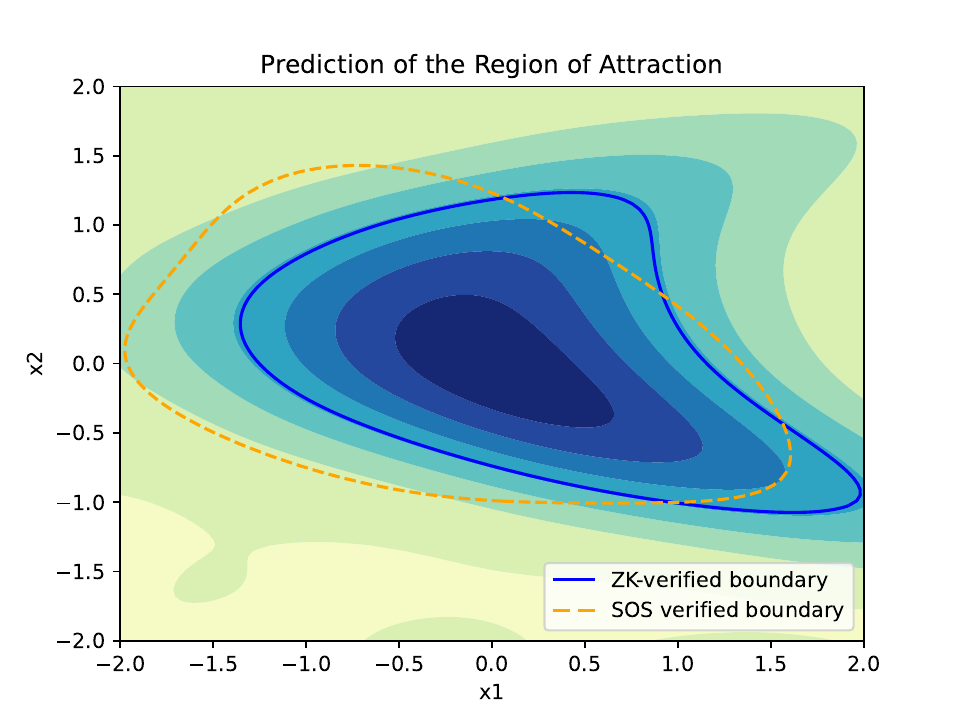}}
\caption{Verification of the  ROA with safety for the 3D System   with Zubov-Koopman approach using $M=40^3$ samples.}
\label{fig: 3d}
\end{figure}

\subsection{Stiff System I}
In this example, we examine Van der Pol oscillators
\begin{equation*}
\dot \xb_1(t) = -\xb_2(t), \quad 
\dot \xb_2(t) = \xb_1(t) - \mu(1 - \xb_1^2(t))\xb_2(t), 
\end{equation*}
with $\mu = 4$ and $6$. 
The value of $U$ changes rapidly due to the high evolution speed of the state within the stiff systems. For both systems, we use $M=300^2$ samples to determine $\uedmd$ and employ a network model with two hidden layers and 15 neurons for modification. However,  the rapid changes prevent the validation of a  Lyapunov function. We present the color map of $\uedmd$ for predicting the ROAs in Fig. \ref{fig: stiff1}. We can observe that as $\mu$ increases, the same number of samples becomes insufficiently dense to provide adequate information for operator learning. This results in substantial oscillations at the boundaries of sub-level regions.

\begin{figure}[!t]
\centerline{\includegraphics[scale = 0.3
]{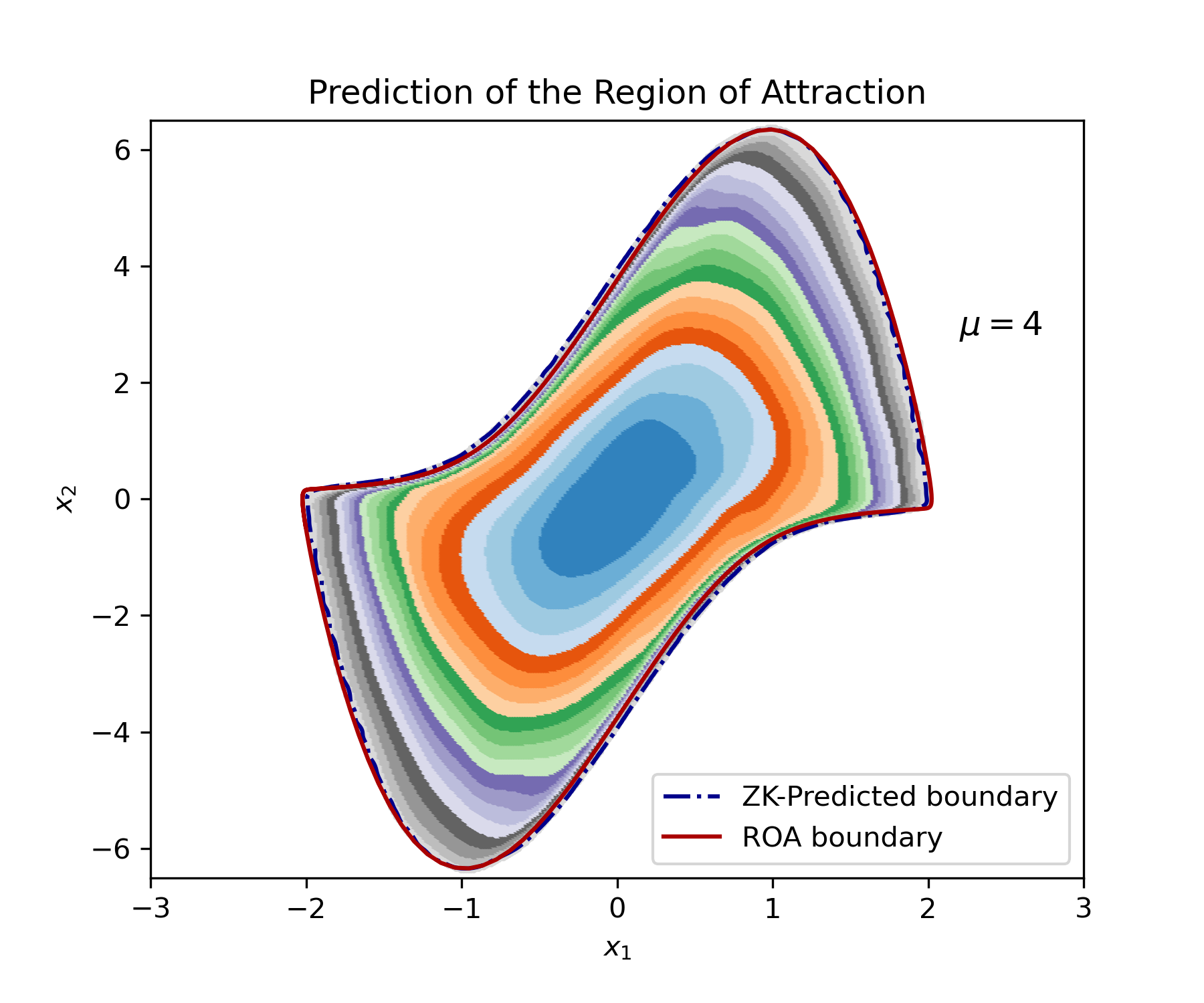}\includegraphics[scale = 0.3
]{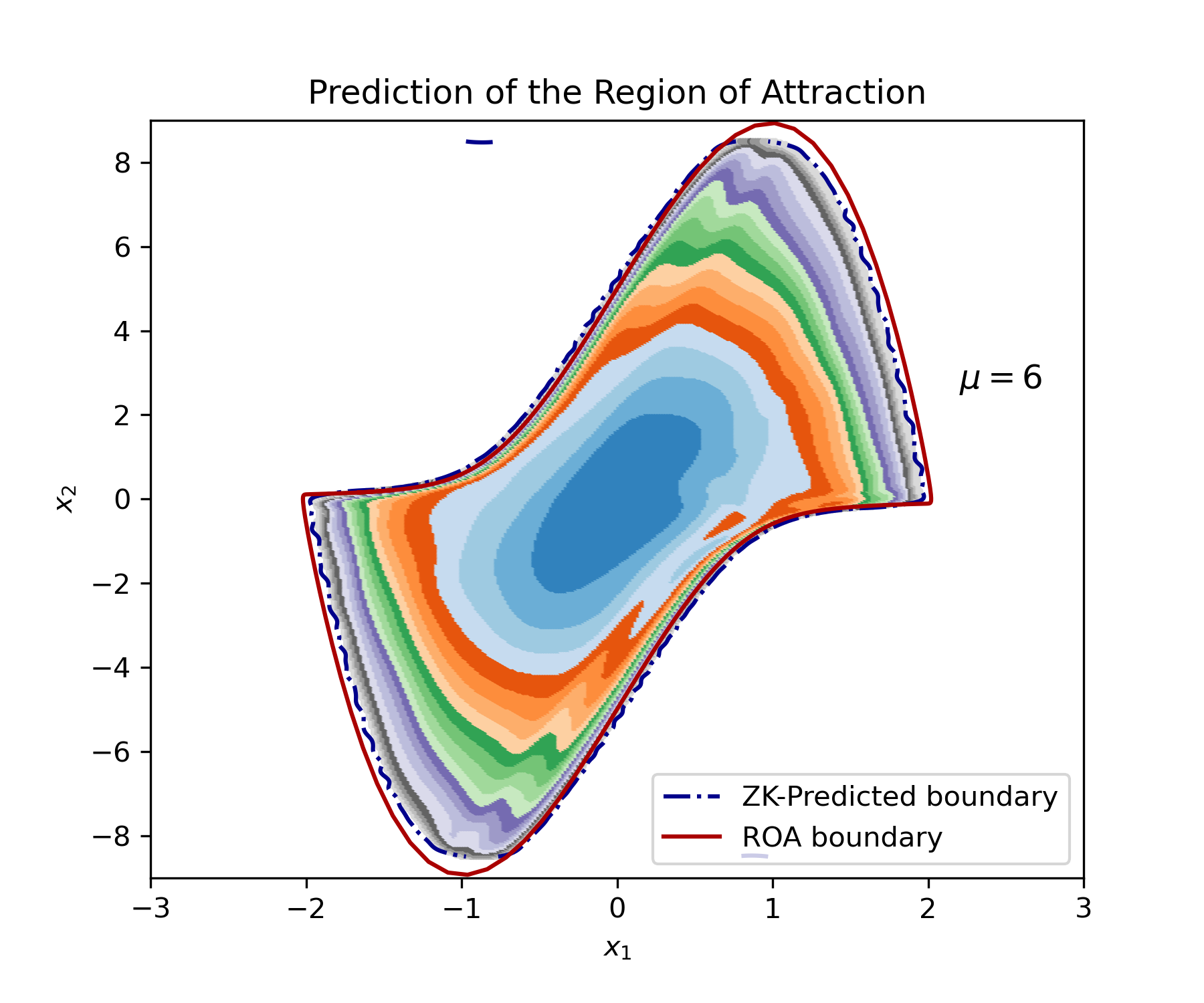}}
\caption{Zubov-Koopman approximations of the ROAs for stiff Van der Pol systems. }
\label{fig: stiff1}
\end{figure}

\subsection{Stiff System II}
We use this example to compare the predictability of ROA  from \cite[Example 3]{mauroy2013spectral} for dynamics
\begin{equation*}
\dot \xb_1(t) = -2\xb_1(t)+\xb_1^2-\xb_2^2, \quad 
\dot \xb_2(t) = -2.5\xb_2(t) + 2\xb_1(t)\xb_2(t).
\end{equation*}
The origin is globally stable on $\R^2\setminus\{x\in\R^2: x_1\geq 2, x_2=0\}$. We choose $\rr=[-4, 4]^2$  
and use $M=300^2$ samples to learn $\uedmd$. The non-zero color map represents the ROA relative to $\rr$, as shown in Fig. \ref{fig: poly3}. The learned $\uedmd$ demonstrates better predictability than the local Lyapunov functions generated by conventional Koopman operators. We omit the NN-modification and formal verification in this example to make a fair comparison with \cite{mauroy2013spectral}. 

\begin{figure}[!t]
\centerline{\includegraphics[scale = 0.48
]{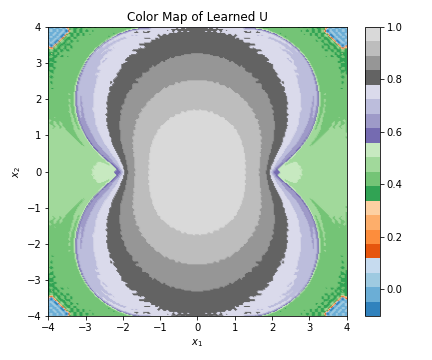}}
\caption{Predictions and verification of the  ROA with safety for the Stiff System II with Zubov-Koopman approach using $M=300^2$ samples.}
\label{fig: poly3}
\end{figure}

\section{Conclusion}\label{sec: conclu}
In this paper, we introduced a Zubov-Koopman approach to characterize the solution to Zubov’s equation,  based on regularity and convergence analysis for a broader range of nonlinear systems. The proposed operator exhibits a semigroup property, and the learning technique is similar to the conventional Koopman operator. In particular, we executed an EDMD-like algorithm using a Fourier-like dictionary of observable functions based on our understanding of the solution properties.  We then devise an iterative approach that employs the learned Zubov-Koopman operator to approximate the solution to Zubov’s equation, achieving a high level of accuracy. Compared to the existing data-driven methods,  our proposed technique demonstrates a better predictability of the ROAs via numerical examples. 

It is worth mentioning that, through numerical examples, we observed clear spectral gaps in the learned operators. \ymr{The effectiveness of using semigroup properties with such spectral gaps to find low-dimensional approximations for characterizing the long-term fixed points of PDEs has been demonstrated in the literature \cite{blomker2004multiscale,  meng2023hopf}. Similarly, this approach} 
can potentially lead to dimension reduction for the Zubov-Koopman operators and effectively address the `curse of dimensionality' for unknown systems. Future directions will be on the spectral analysis with applications to high-dimensional physical systems.

One potential limitation of this work is its predictability of the flow in the state space. We have to rely on the knowledge of fixed points to predict the ROAs. \ymr{However, recent work  \cite{meng2024koopman} has provided a way to use the same set of training data as this paper for identifying system transitions, achieving improved accuracy compared to the benchmark Koopman-logarithm based approach \cite{mauroy2019koopman}. It would be interesting to investigate how to align this approach with dimension reduction analysis and the proposed method in this paper, so that a streamlined computational tool can be developed for situations with limited information about stable attractors.} 
Another immediate area for future work with the Zubov-Koopman framework could involve systems with robust control or other types of measurable inputs. It would be intriguing to explore its potential in predicting controllable regions with stability and safety properties for unknown systems. 

\appendices

\section{Fundamental Properties of Viscosity Solutions}\label{sec: app1} 
We provide some fundamental properties of viscosity solutions in this appendix. 
The following lemma \cite[Lemma 1.7, Lemma 1.8, Chapter I]{bardi1997optimal} provides some insights on $\partial^+v(x)$ and $\partial^+v(x)$ for some $v\in C(\X)$. 

\begin{lem}[Sub- and Supperdifferential]\label{lem: facts}
Let $v\in C(\X)$. Then
\begin{itemize}
    \item[(1)] $p\in \partial^+v(x)$ if and only if there exists $\nu\in C^1(\X)$ such that $\nabla\nu(x)=p$ and $v-\nu$ has a local maximum at $x$;
    \item[(2)] $q\in \partial^-v(x)$ if and only if there exists $\nu\in C^1(\X)$ such that $\nabla\nu(x)=q$ and $v-\nu$ has a local minimum at $x$;
    \item[(3)] if for some $x$ both $\partial^+v(x)$ and  $\partial^-v(x)$ are nonempty, then $\partial^+v(x)=\partial^-v(x)=\{\nabla v(x)\}$;
    \item[(4)] the sets  $\{x\in\X: \partial^+v(x)\neq \emptyset\}$ and $\{x\in\X: \partial^-v(x)\neq \emptyset\}$ are dense. 
\end{itemize}
\end{lem}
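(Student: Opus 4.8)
The plan is to handle the four items by reducing as much as possible to item~(1) and then isolating the only two pieces that require genuine construction: the ``only if'' direction of~(1) and the density claim in~(4). First I would observe that~(2) is not a separate argument: since $\partial^-v(x)=-\partial^+(-v)(x)$ directly from \eqref{E: compare1a}--\eqref{E: compare1b}, and since $x$ is a local minimum of $v-\nu$ exactly when it is a local maximum of $(-v)-(-\nu)$ with $\nabla(-\nu)(x)=-\nabla\nu(x)$, item~(2) for $v$ is item~(1) for $-v$ verbatim. So I would prove~(1),~(3),~(4) and let~(2) follow.

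For~(1) the ``if'' direction is routine: if $\nu\in C^1(\X)$, $\nabla\nu(x)=p$, and $v-\nu$ has a local maximum at $x$, then near $x$ we have $v(y)-v(x)\le\nu(y)-\nu(x)=p\cdot(y-x)+o(\abs{y-x})$, and dividing by $\abs{y-x}$ and taking $\limsup_{y\ra x}$ gives $p\in\partial^+v(x)$ by \eqref{E: compare1a}. The substance is the converse. Given $p\in\partial^+v(x)$, set $\gamma(y):=\big(v(y)-v(x)-p\cdot(y-x)\big)^+$; by definition $\gamma(y)=o(\abs{y-x})$, so the nondecreasing quantity $\omega(r):=\sup_{0<\abs{y-x}\le r}\gamma(y)/\abs{y-x}$ satisfies $\omega(0^+)=0$ and $\gamma(y)\le\abs{y-x}\,\omega(\abs{y-x})$. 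After replacing $\omega$ by a continuous nondecreasing majorant (still with $\omega(0^+)=0$), I would build the radial correction $g(y)=G(\abs{y-x})$ with $G(r):=2\int_r^{2r}\omega(s)\,ds$. Monotonicity gives $G(r)\ge 2r\,\omega(r)\ge\gamma(y)$ when $\abs{y-x}=r$, while $G\in C^1$ with $G'(0^+)=0$, so $g\in C^1$ with $\nabla g(x)=0$. Then $\nu(y):=v(x)+p\cdot(y-x)+g(y)$ is $C^1$, $\nabla\nu(x)=p$, and $v(y)-\nu(y)\le\gamma(y)-g(y)\le 0=v(x)-\nu(x)$ near $x$, so $v-\nu$ has a local maximum at $x$.

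Item~(3) is then purely algebraic: for $p\in\partial^+v(x)$ and $q\in\partial^-v(x)$, the two one-sided estimates give, for each $\delta>0$ and all $y$ near $x$, $v(y)-v(x)-p\cdot(y-x)\le\delta\abs{y-x}$ and $v(y)-v(x)-q\cdot(y-x)\ge-\delta\abs{y-x}$; subtracting and testing along $y=x+tw$, $t\downarrow 0$, yields $(q-p)\cdot w\le 2\delta$ for every unit vector $w$, so $p=q=:\xi$, whereupon the estimates squeeze $\frac{v(y)-v(x)-\xi\cdot(y-x)}{\abs{y-x}}\ra 0$, i.e.\ $v$ is differentiable at $x$ with $\nabla v(x)=\xi$, and a directional argument rules out any other vector, giving $\partial^+v(x)=\partial^-v(x)=\set{\nabla v(x)}$. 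For~(4) I would use a barrier on an arbitrary ball $\oball(x_0,r)\subseteq\X$: the smooth function $\nu(y)=\big(r^2-\abs{y-x_0}^2\big)^{-1}$ blows up as $y\ra\partial\oball(x_0,r)$, so $v-\nu\ra-\infty$ at the boundary and, $v$ being continuous, attains an interior maximum at some $\bar x$; by~(1), $\nabla\nu(\bar x)\in\partial^+v(\bar x)$, so $\set{\partial^+v\neq\emptyset}$ meets every ball and is dense. Taking $-\nu$ forces an interior minimum and gives density of $\set{\partial^-v\neq\emptyset}$.

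I expect the main obstacle to be the converse direction of~(1): promoting the purely asymptotic bound encoded in $\partial^+v(x)$ to an honest $C^1$ function that touches $v$ at $x$ and dominates it nearby. The delicate point is regularity \emph{at} $x$ — ensuring the radial majorant $g$ is genuinely $C^1$ with vanishing gradient there — which is exactly why I first pass to a continuous nondecreasing modulus $\omega$ before integrating. Once this test function is in hand, the remaining items are mechanical.
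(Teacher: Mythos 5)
Your proof is correct in all four items. Note, however, that the paper does not prove this lemma at all: it is quoted verbatim from the cited reference (Bardi and Capuzzo-Dolcetta, Chapter I, Lemmas 1.7 and 1.8), so there is no in-paper argument to compare against. What you have written is essentially a reconstruction of the classical proof from that reference: the reduction of (2) to (1) via $\partial^-v(x)=-\partial^+(-v)(x)$, the routine ``if'' direction of (1), the radial-modulus construction $G(r)=2\int_r^{2r}\omega(s)\,ds$ for the converse (this is precisely the standard device; the monotonicity estimate $\int_r^{2r}\omega\ge r\,\omega(r)$ already suffices, so your factor $2$ is harmless but unnecessary), the squeezing argument for (3), and a penalization argument for (4). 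The one place you deviate from the textbook is (4): the reference penalizes with a quadratic $|y-x_0|^2/\eps$ and lets $\eps\downarrow 0$ to force the maximizer into the interior, whereas you use the blow-up barrier $\nu(y)=(r^2-|y-x_0|^2)^{-1}$; both work, and yours avoids the limiting argument at the cost of one technicality you should acknowledge: your $\nu$ is defined only on the open ball and is not in $C^1(\X)$ as the statement of (1)--(2) literally requires, so you must either cut it off outside a neighborhood of the interior extremum $\bar x$ (the local-maximum property at $\bar x$ is unaffected) or, more simply, verify $\nabla\nu(\bar x)\in\partial^+v(\bar x)$ directly from \eqref{E: compare1a}, which only uses local data. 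With that one-line repair the argument is complete, including the delicate point you correctly identified, namely that passing to a continuous nondecreasing majorant of $\omega$ before integrating is what makes $g$ genuinely $C^1$ with $\nabla g(x)=0$.
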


In view of (1) and (2) in Lemma \ref{lem: facts}, (1) and (2) in Definition \ref{def: vis1} are equivalent as
\begin{itemize}
    \item[(1)] for any $\nu\in C^1$, if $x$ is a local maximum for $v-\nu$, then $F(x, \nu(x), \nabla\nu(x))\leq 0$;  
    \item[(2)] for any $\nu\in C^1$, if $x$ is a local minimum for $v-\nu$, then $F(x, \nu(x), \nabla\nu(x))\geq 0$. 
\end{itemize}

\section{Unique Bounded Viscosity Solutions 
}\label{sec: app_uniq} 

\textit{\bf Proof of Theorem \ref{thm: unique}:} The uniqueness property within $\roa(\A)$ follows the result \cite[Thoerem 3.8]{camilli2001generalization} for zero  perturbation of \eqref{E: sys}. Alternatively, we can suppose there exists another  viscosity solution $\tilde{V}$ to $-\nabla V\cdot f(x)-\eta(x)=0$ with $V(\xeq) = 0$. Let $w = V-\tilde{V}$. Then, $w$ solves $-\nabla w(x)\cdot f(x) = 0$ with $w(\xeq)=0$ in a viscosity sense. One can follows a similar proof as in \cite[Chapter II, Proposition 5.18]{bardi1997optimal} to show that $w(\phi(t, x))=w(x)$ for all $t\geq 0$ and all $x\in\roa(\A)$. By the stability assumption, one has $w\equiv w(\xeq) = 0$, which shows that $V$ is the unique viscosity solution on $\roa(\A)$. For any valid $h$, by the definition \eqref{E: U} 
and by  \cite[Chapter
2, Proposition 2.5]{bardi1997optimal}, we immediately have that $U_h$ is the unique viscosity solution to \eqref{E: dual_test} on $\roa(\A)$.

Now we suppose  $\roa(\A)\neq \R^n$. Then, $\partial\roa(\A)\neq \emptyset$.  By the above uniqueness argument within $\roa(\A)$,  and  by the construction of $U_h$, all viscosity solutions to \eqref{E: dual_test} should be equal to $0$ on $\partial\roa(\A)$.  We then work on $\R^n\setminus\overline{\roa(\A)}$ and verify that $0$ is the unique bounded viscosity solution to \eqref{E: dual_test}. 

Let $\Omega=\R^n\setminus\overline{\roa(\A)}$, then, $\partial\Omega = \partial \roa(\A)$.  
Suppose $u$ is another bounded viscosity solution to \eqref{E: dual_test} and let $\Psi(x,y)=u(x)- |x-y|^2/(2\eps)$ for any $\eps>0$.  Then, $\lim_{|x|+|y|\ra \infty}\Psi(x, y)=-\infty$. 
By the continuity of $\Psi$, there exists  $x_\eps, y_\eps\in\overline{\Omega}$ such that $ \Psi(x_\eps, y_\eps)=\max_{\overline{\Omega}\times\overline{\Omega}} \Psi(x,y)$.  
Then, for any $\eps>0$, $\max_{x\in\overline{\Omega}} u(x) = \max_{x\in \overline{\Omega}} \Psi(x,x)\leq 
    \Psi(x_\eps,y_\eps)\leq u(x_\eps)$. 
In addition, by the exact same argument as in  \cite[Chapter II, Theorem 3.1]{bardi1997optimal}, one has that $|x_\eps-y_\eps|\ra 0$ and $\frac{|x_\eps-y_\eps|^2}{2\eps}\ra 0 $ as $\eps\ra0$. 

Now we argue that  
$u(x_\eps)\leq 0$ as $\eps\ra 0$, which will imply that $u(x)\leq 0$ on $\Omega$.  If $x_\eps\in\partial\Omega$, then $u(x_\eps)=0$. 
If $y_\eps\in\partial\Omega$, then 
$u(x_\eps)= u(x_\eps) - u(y_\eps)\ra 0$ by the continuity of $u$. 
If $x_\eps, y_\eps\in\Omega$, set $\psi_1(y):=u(x_\eps)-\frac{|x_\eps-y|^2}{2\eps}$ and $\psi_2(x):=\frac{|x-y_\eps|^2}{2\eps}$. Then, $x_\eps$ is a local maximum for $u-\psi_2$ and $y_\eps$ is a local minimum for $U_h-\psi_1$. Note that $\nabla\psi_1(y_\eps)= \frac{x_\eps-y_\eps}{\eps}=\nabla\psi_2(x_\eps)$.  In addition, since $u$ and $U_h$ are viscosity (sub/supper) solutions on $\Omega$, we have that $\eta(x_\eps)u(x_\eps)-\frac{x_\eps-y_\eps}{\eps}\cdot f(x_\eps)\leq 0$ and $\frac{x_\eps-y_\eps}{\eps}\cdot f(y_\eps)\leq 0$. 
This implies $u(x_\eps)\leq \frac{x_\eps-y_\eps}{\eps\eta(x_\eps)}\cdot (f(x_\eps)-f(y_\eps))$ since $\eta(x_\eps)>0$. 
By the  continuity of $f$, we have $u(x_\eps)\leq 0$  as $\eps\ra 0$.

The other side, i.e., $u(x)\geq 0$ on $\Omega$, can be proved in the same manner. Therefore, $u(x)=0$ on $\Omega$. \pfbox


\section{Fundamental Properties of Zubov-Koopman}\label{sec: app2}
In the appendix, we complete the proof for Proposition \ref{prop: semigroup_eigenfunction}.
\begin{proof}
It is clear that $\T_0h(x)=h(x)$. We also have the following identities. 
\begin{small}
        \begin{equation*}
    \begin{split}
        &\T_s\circ\T_th(x) \\
        =  &\exp\set{-v_s(x)}\T_th(\phi (s, x))\\
         = &\exp\set{-v_s(x)} \exp\set{-\int_0^t\eta(\phi (r, \phi (s, x)))dr}h(\phi (t, \phi (s, x)))\\
         = &\exp\set{-v_s(x)}\exp\set{-\int_0^t\eta(\phi (r+s, x)))dr}h(\phi(t+s, x))\\
          = &\exp\set{-v_s(x)}\exp\set{-\int_s^{t+s}\eta(\phi(\sigma, x))d\sigma}h(\phi(t+s, x))\\
          = &\exp\set{-\int_0^{t+s}\eta(\phi(\sigma, x))dr}h(\phi(t+s, x)) =\T_{s+t}h(x),
    \end{split}
\end{equation*}
\end{small}
which show the associativity of $\{\T_t\}_{t\geq 0 }$. The strong continuity of $\{\T_t\}_{t\geq 0 }$ can be  verified straightforwardly by Definition \ref{def: semigroup}. For the second part of the proof, we apply dynamic programming and use the similar technique as above. We omit the details due to the similarity. 
\end{proof}

\section{Illustration of the Zubov-Koopman Approach with a One-Dimensional Nonlinear System}

\subsection*{One-dimensional Nonlinear System}\label{sec: num_1d}

We revisit Example \ref{eg: 1d} and explain the effectiveness of the Zubov-Koopman approach. Consider the dynamical system 
\begin{small}
    $$\dot{\xb}(t)=-\xb(t) + \xb^3(t), \;\;\xb(0)=x, $$
\end{small}
and  $\eta(x)=|x|$. We would like to find the ROA of $\{0\}$ and a valid Lyapunov function using the trajectory data up to $\Delta t = 1$ within the region of interest $\rr=[-1.5, 1.5]$. We sample $\{x^{(m)}\}_{m=0}^{M-1}\subset\rr$ for $M=3001$, and use the  dictionary $\Zk=[\zk_0, \zk_{\pm 1}, \cdots, \zk_{\pm (N-1)}]$ for $N=512$, where  $    \zk_i = \cos\left(\frac{2\pi i x}{3}\right )\exp\set{-x^2/4}, i\in\set{0, \pm1, \cdots, \pm (N-1)}.$ 
We execute Algorithm \ref{alg: 
one_time} to prepare the training set, which takes $15.086$ seconds. We obtain the discrete operator $\Tb$ as in Section \ref{sec: edmd}, which requires $0.309$ seconds. The spectrum ($\log$-scale) and the first $5$ learned functions (of $\T_t^\eps$ as defined in Section \ref{sec: compact_operator}) are shown in Fig.~\ref{fig: eig}. We can clearly see the spectral gap between the top eigenvalue (in blue) and the rest of the point spectrum.  

\begin{figure}[!t]
\centerline{\includegraphics[scale = 0.48
]{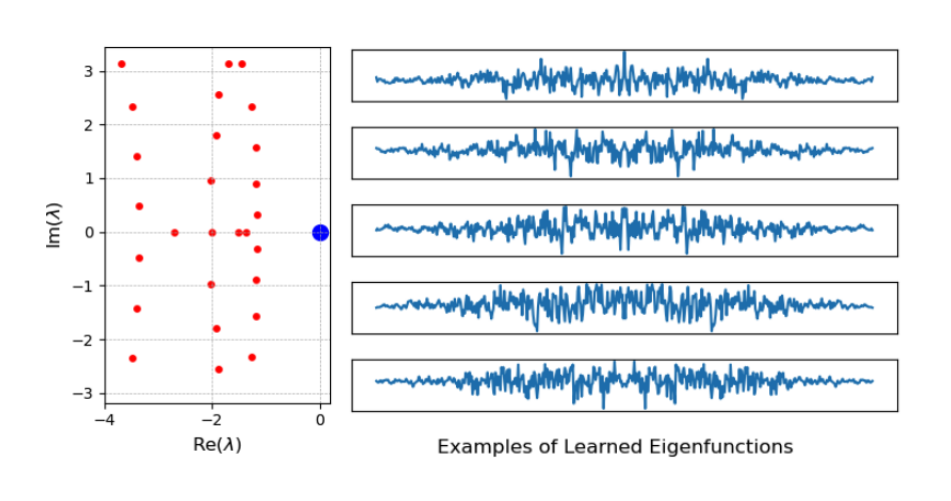}}
\caption{Selected eigenvalues (log-scale) and eigenfunctions of the learned operator.}
\label{fig: eig}
\end{figure}
\begin{rem}
For each $i$,  $\zk_i(0)=1$. 
As discussed in Section \ref{sec: time-series}, we have  $\lim_{t\ra\infty}\T_t\zk_i= U$. We simply pick $\zk_0$ as for our purpose.  Furthermore, the choice of $\set{\cos\left(\frac{2\pi i x}{3}\right )}$ is in consideration of the periodicity of the (stopped) trajectories in $\rr$. One can also replace it with Fourier basis. The multiplication with $\exp\set{-x^2/4}$ is for smoothing the high-frequency observables functions. \Qed
\end{rem}

Using the learned matrix $\Tb$, we first show the evolution of $\zk_0$ governed by the approximation of $\T_\Delta$, i.e., we obtain $\Zk_{2N-1}(\cdot) \Tb^k\mathbf{w}$, where $\mathbf{w}=[1, 0, \cdots, 0]^T$, for a sequence of $k$.  We observe the trend of convergence as in Fig.\ref{fig1}.   
\begin{figure}[!t]
\centerline{\includegraphics[scale = 0.43 
]{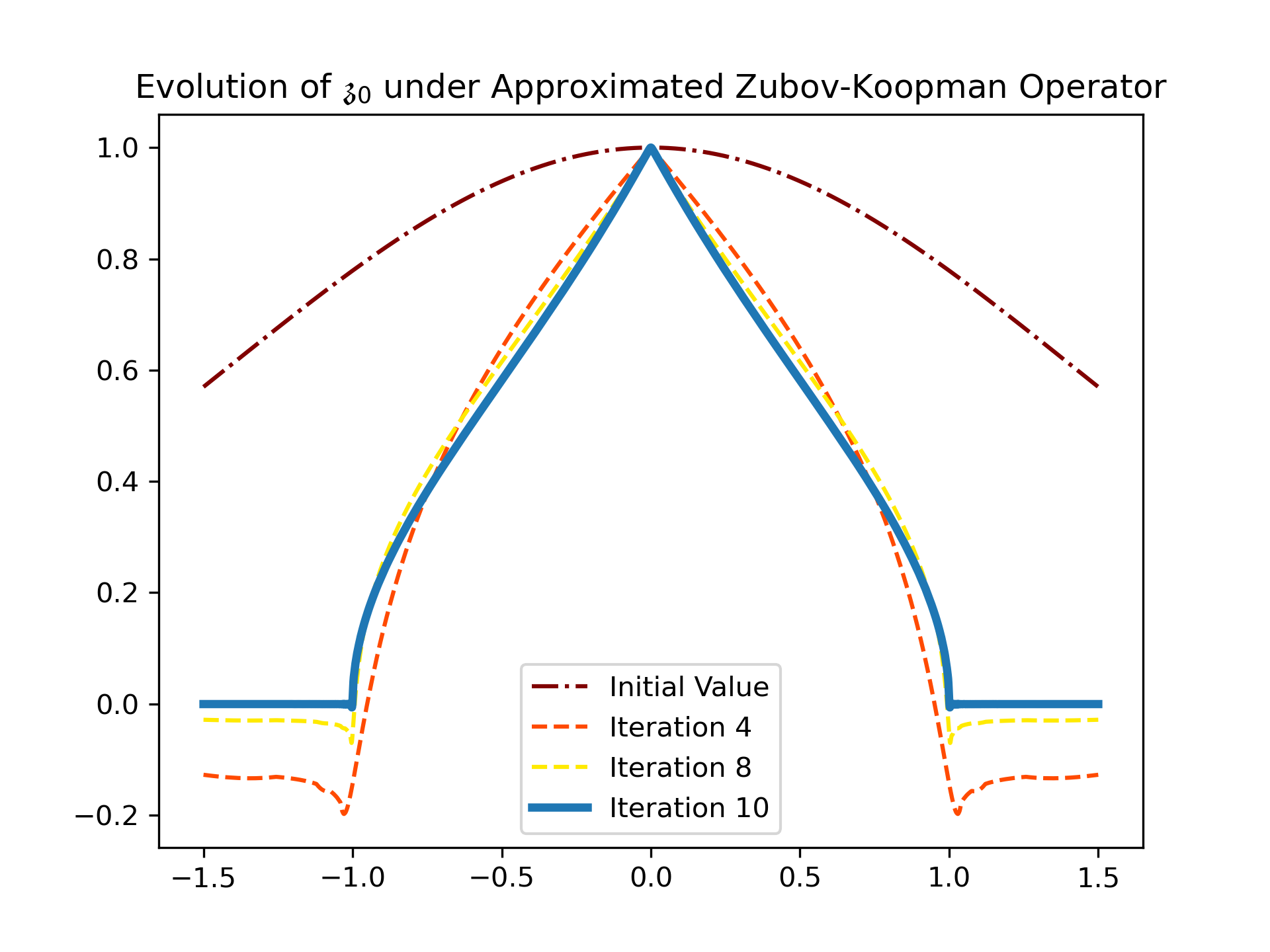}}
\caption{Evolution and convergence of $\zk_0$ under the $k$-th iteration of Operator $\Tb$.}
\label{fig1}
\end{figure}

After the $10$-th iteration, we have that $\|\Tb^{10}-\Tb^{9}\|_F\approx 0.05$. Now we let $\uedmd(\cdot) =\Zk_{2N-1}(\cdot) \Tb^{10}\mathbf{w}$ and obtain  that the largest connected positive level sets of $\uedmd$ is $\set{x\in\rr: \uedmd(x)\geq 0.0012}$, which is approximately $[-0.99975, 0.99975]$. On this domain, we set $V_{\operatorname{ZK}}=-\log(\uedmd)$. 
Then, we 
compare the $\uedmd$ and $V_{\operatorname{ZK}}$ with the true $U$ and $V$ defined in Example \ref{eg: 1d} as in Fig.~\ref{fig: U_1d}. It can be inspected that both approximations achieve a high accuracy. The under-approximation of the ROA is due to the tiny oscillation near the boundary points $\set{\pm1}$.
\begin{figure}[!t]
\centerline{\includegraphics[scale = 0.3
]{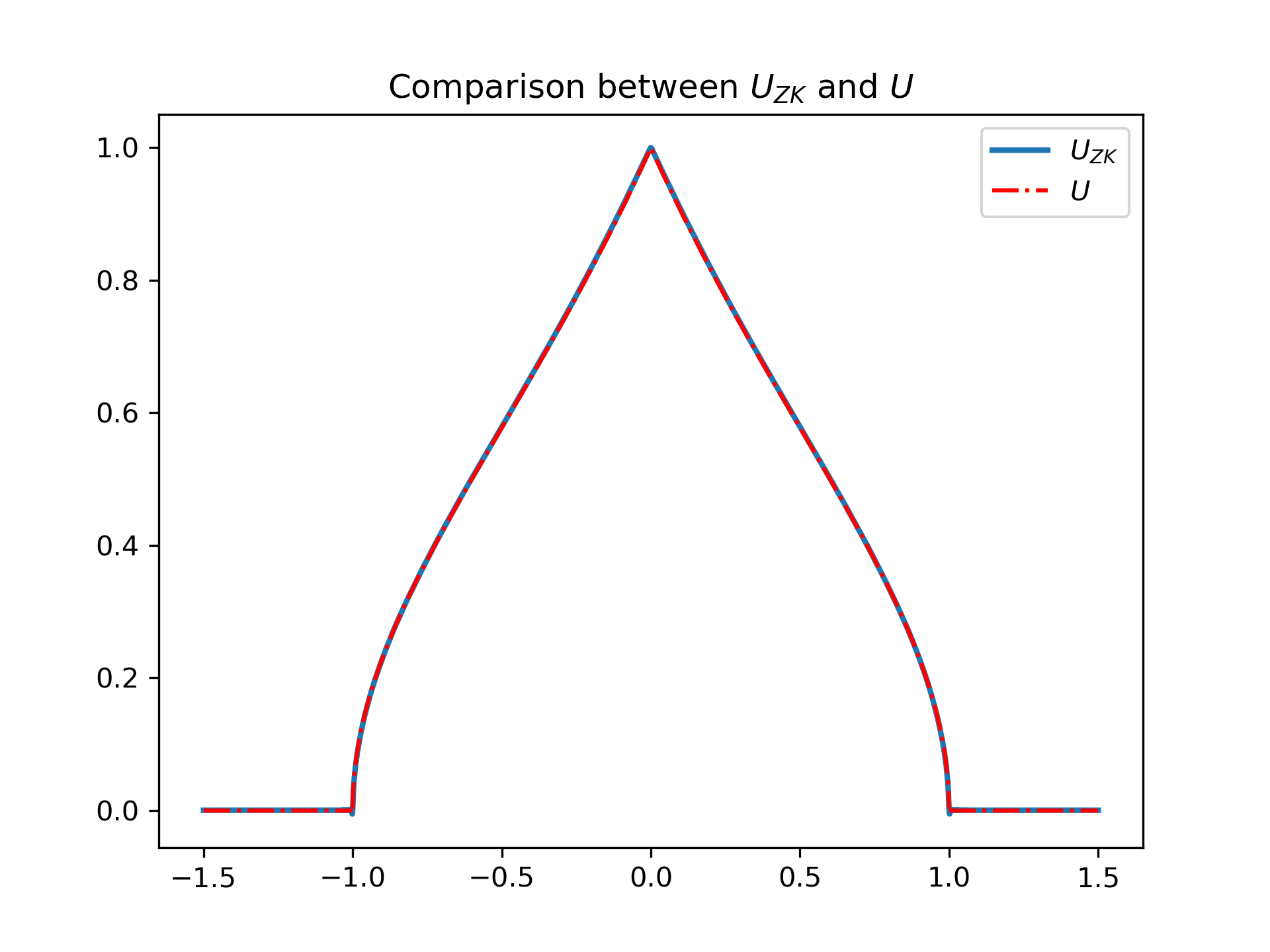}\includegraphics[scale = 0.3
]{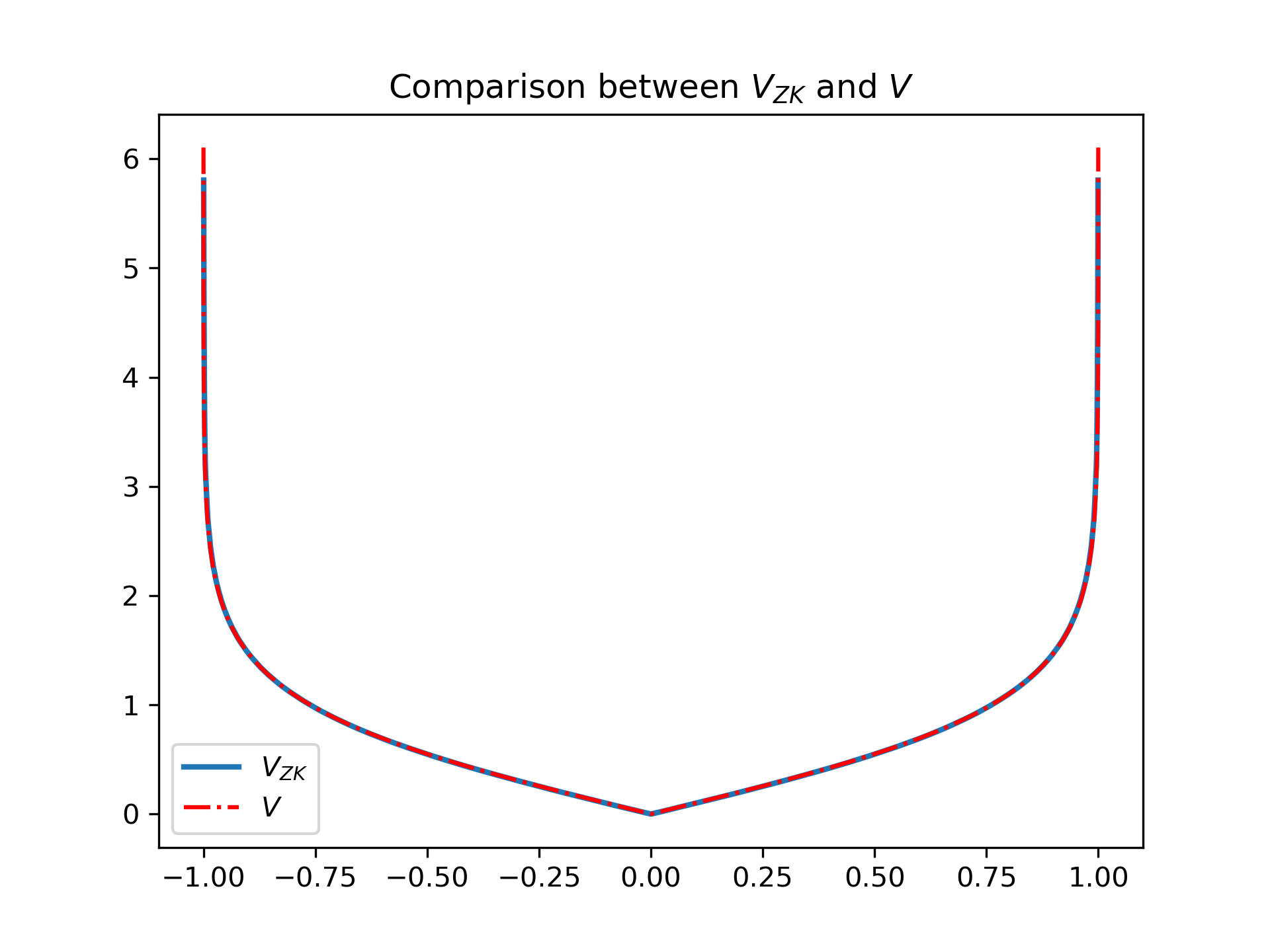}}
\caption{Comparisons between the true functions and Zubov-Koopman approximations. }
\label{fig: U_1d}
\end{figure}

\begin{rem}
    It is worth noting that, as shown in Figure \ref{fig: eig}, the first (complex-valued) eigenfunction corresponding to the eigenvalue $e^{0t}=1$  is nowhere close to the real-valued true eigenfunction $U$ of $\T_\Delta$. This phenomenon aligns with our explanation in Remark \ref{rem: extension}. \Qed
\end{rem}

To see whether $V_{\operatorname{ZK}}$ can serve as a Lyapunov function, we proceed to verify if the numerical error can still make $\L_f V_{\operatorname{ZK}}<0$ (or equivalently, $\L_f \uedmd>0$). The comparisons with the true functions are provided in Fig. \ref{fig: Lf}. 
\begin{figure}[!t]
\centerline{\includegraphics[scale = 0.3
]{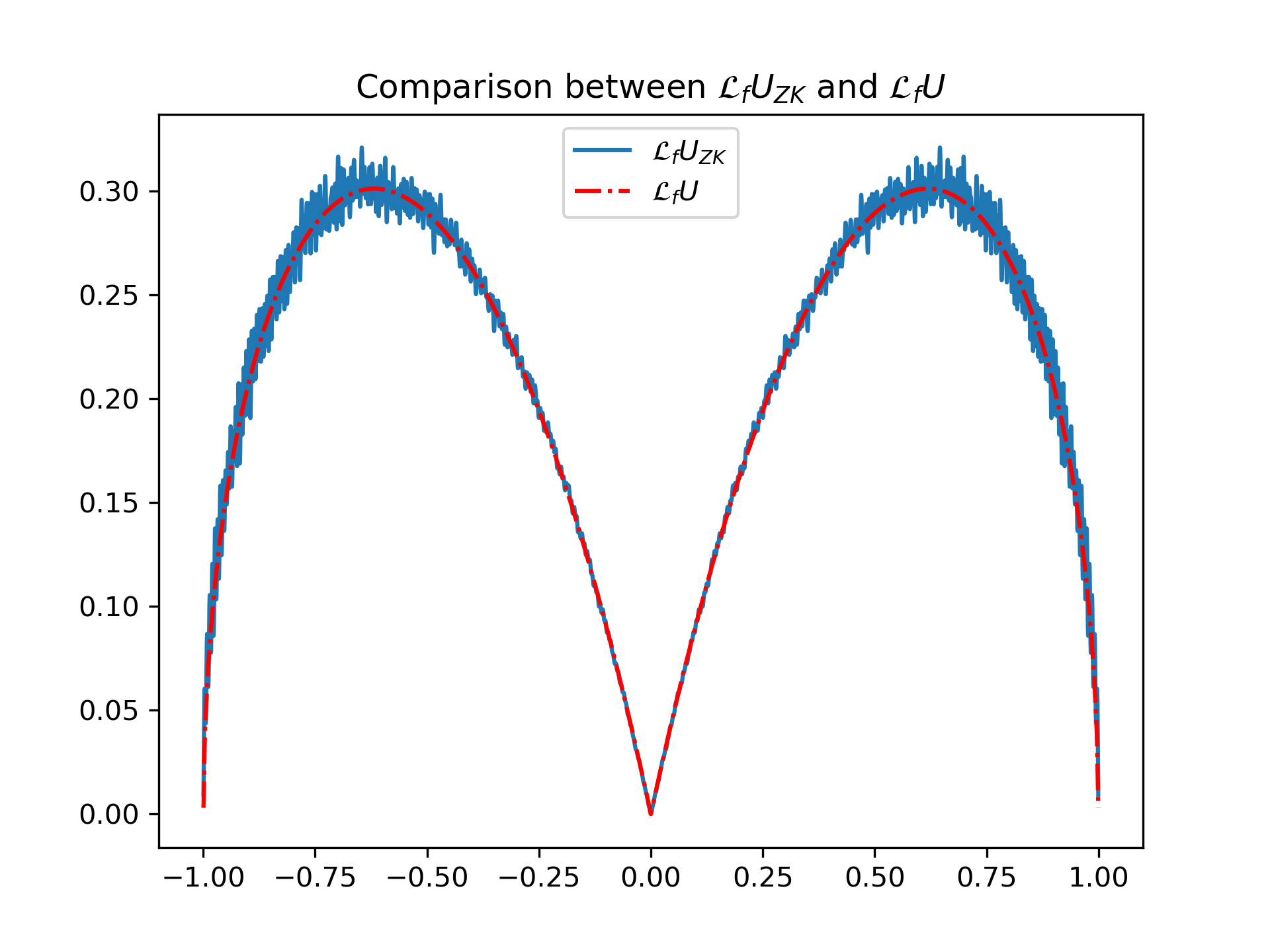}\includegraphics[scale = 0.3
]{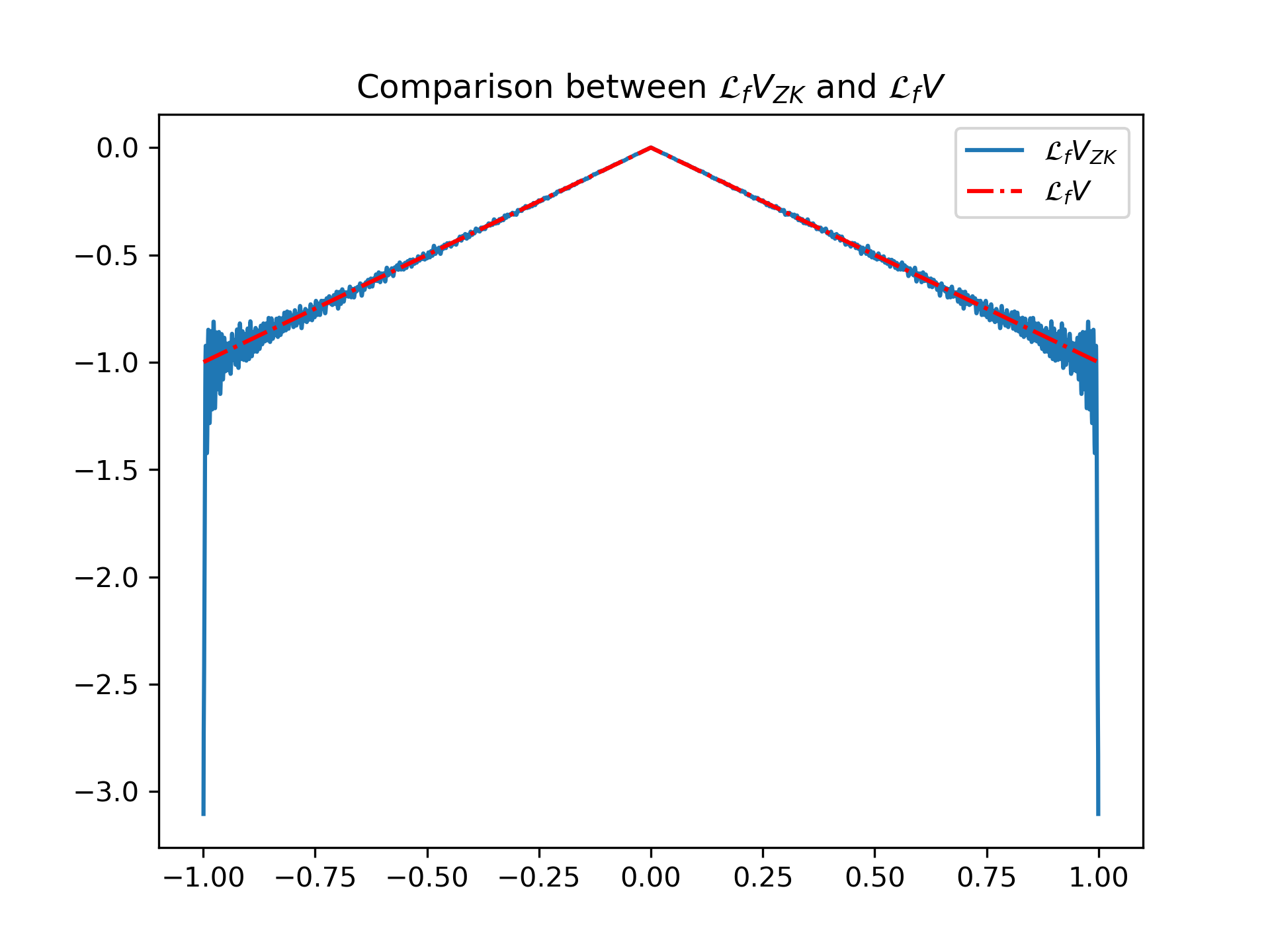}}
\caption{Comparisons between the Lie derivatives and Zubov-Koopman approximations. }
\label{fig: Lf}
\end{figure}
The approximations for $\L_f U$ and $\L_f V$ are not as accurate as those for $U$ and $V$ due to the high-frequency decomposition. However, the approximate quantities are verified to preserve the sign on  $[-0.9995, 0.9995]$, which is the valid sub-domain to use $V_{\operatorname{ZK}}$ as a Lyapunov function. We hence do not have to use an extra NN-modification for the construction. 


\bibliographystyle{ieeetr} 
\bibliography{ref}

\end{document}